\def \N {\mathbb{N}}
\def \R {\mathbb{R}}
\theoremstyle{definition}
\newtheorem{definition}{Definition}[section]
\newtheorem{remark}[definition]{Remark}
\theoremstyle{plain}
\newtheorem{theorem}[definition]{Theorem}
\newtheorem{proposition}[definition]{Proposition}
\newtheorem{lemma}[definition]{Lemma}
\newtheorem{corollary}[definition]{Corollary}
\newtheorem{openproblem}{Open Problem}
\numberwithin{equation}{section}
\renewcommand{\epsilon}{\varepsilon}
\renewcommand{\leq}{\leqslant}
\renewcommand{\le}{\leqslant}
\renewcommand{\geq}{\geqslant}
\renewcommand{\ge}{\geqslant}
 \title[$(s, p)$-superposition of nonlinear fractional operators]{A general theory \\ for the $(s, p)$-superposition \\ of nonlinear fractional operators}
\author[S. Dipierro, E. Proietti Lippi, C. Sportelli and E. Valdinoci]{Serena Dipierro, Edoardo Proietti Lippi, Caterina Sportelli and Enrico Valdinoci}
\address{Department of Mathematics and Statistics
\newline\indent University of Western Australia \newline\indent
35 Stirling Highway, WA 6009 Crawley, Australia.\newline
\newline\indent
\tt serena.dipierro@uwa.edu.au \newline\indent
\tt edoardo.proiettilippi@uwa.edu.au \newline\indent
\tt caterina.sportelli@uwa.edu.au \newline\indent
\tt enrico.valdinoci@uwa.edu.au}
\begin{document}

\maketitle

\begin{abstract}
We consider the continuous superposition of operators of the form
\[
\iint_{[0, 1]\times (1, N)} (-\Delta)_p^s \,u\,d\mu(s,p),
\]
where~$\mu$ denotes a signed measure over the set~$[0, 1]\times (1, N)$, joined to a nonlinearity satisfying a proper subcritical growth.  The novelty of the paper relies in the fact that, differently from the existing literature, the superposition occurs in both~$s$ and~$p$.

Here we introduce a new framework which is so broad to include, for example,  the scenarios of the finite sum of different (in both~$s$ and~$p$) Laplacians, or of a fractional $p$-Laplacian plus a $p$-Laplacian, or even combinations involving some fractional Laplacians with the ``wrong" sign.

The development of this new setting comes with two applications, which are related to the Weierstrass Theorem and a Mountain Pass technique. The results obtained contribute to the existing literature with several specific cases of interest which are entirely new.
\end{abstract}

\tableofcontents

\section{Introduction}

\subsection{Framework of the problem}
In this paper, we introduce a new functional framework tailored to address the
case of equations involving the superposition of fractional $p$-Laplace operators of different orders under Dirichlet boundary conditions.  

The issue of the superposition of (possibly fractional) operators, namely operators of the form
\[
\int_{[0, 1]} (-\Delta)^su \, d\mu(s),
\]
for a suitable measure~$\mu$ on the interval of fractional exponents~$[0, 1]$, is a topic under extensive investigation: see, for instance, \cite{MR3485125}, in which
the measure on fractional exponents was supposed to be positive and supported away from~$s = 0$. 
A general setting (also removing these two assumptions)
was recently presented in~\cite{MR4736013}, where, in addition, the case of signed measures\footnote{Accounting for negative signs in diffusive operators is not just a mathematical curiosity, since
elliptic operators with negative sign can be useful to model concentration phenomena (e.g., in the heat equation with inverted time direction). For example, in mathematical biology, a simple model in which a Laplacian ``with the wrong sign'' naturally appears is that of a biological species subject to chemotaxis when the chemotactic agent is proportional to the logarithm of the density of population (see equation~(1.1.21) in~\cite{ZZLIB} with~$w:=\ln u$).} was taken into account, that is when~$\mu=\mu^+ -\mu^-$, with~$\mu^+$ and~$\mu^-$ (nonnegative) measures. This setting has been also extended to the case of superposition of fractional $p$-Laplacians, for a given Lebesgue exponent~$p$ (see~\cite{DPSV2}), and, more recently, to the superposition of fractional operators under Neumann boundary conditions (see~\cite{TUTTI, TUTTI2}).

The present work enriches the existing literature with a case never addressed before, in which the superposition occurs on both the exponents~$s$ and~$p$.

The results obtained are very comprehensive but they are also new in a number of specific interesting cases which will be acquired as an easy consequence of our overarching strategy.

Let now focus on the following specific framework.
Assume that~$N\ge 2$. We consider two finite (Borel) measures~$\mu^+$ and~$\mu^-$ over the set
\begin{equation}\label{Sigma}
\Sigma:=[0,1]\times (1,N)
\end{equation}
and we assume that, for some~$\overline{s}\in (0,1]$, they satisfy
\begin{equation}\label{ipotesimu11}
\mu^+ \Big([\overline s,1]\times (1, N)\Big)>0\quad\mbox{ and }\quad \mu^- \Big([\overline s,1]\times (1, N)\Big)=0.
\end{equation}
\vspace{-.35in}
\begin{figure}[h]
\begin{center}
\includegraphics[scale=.45]{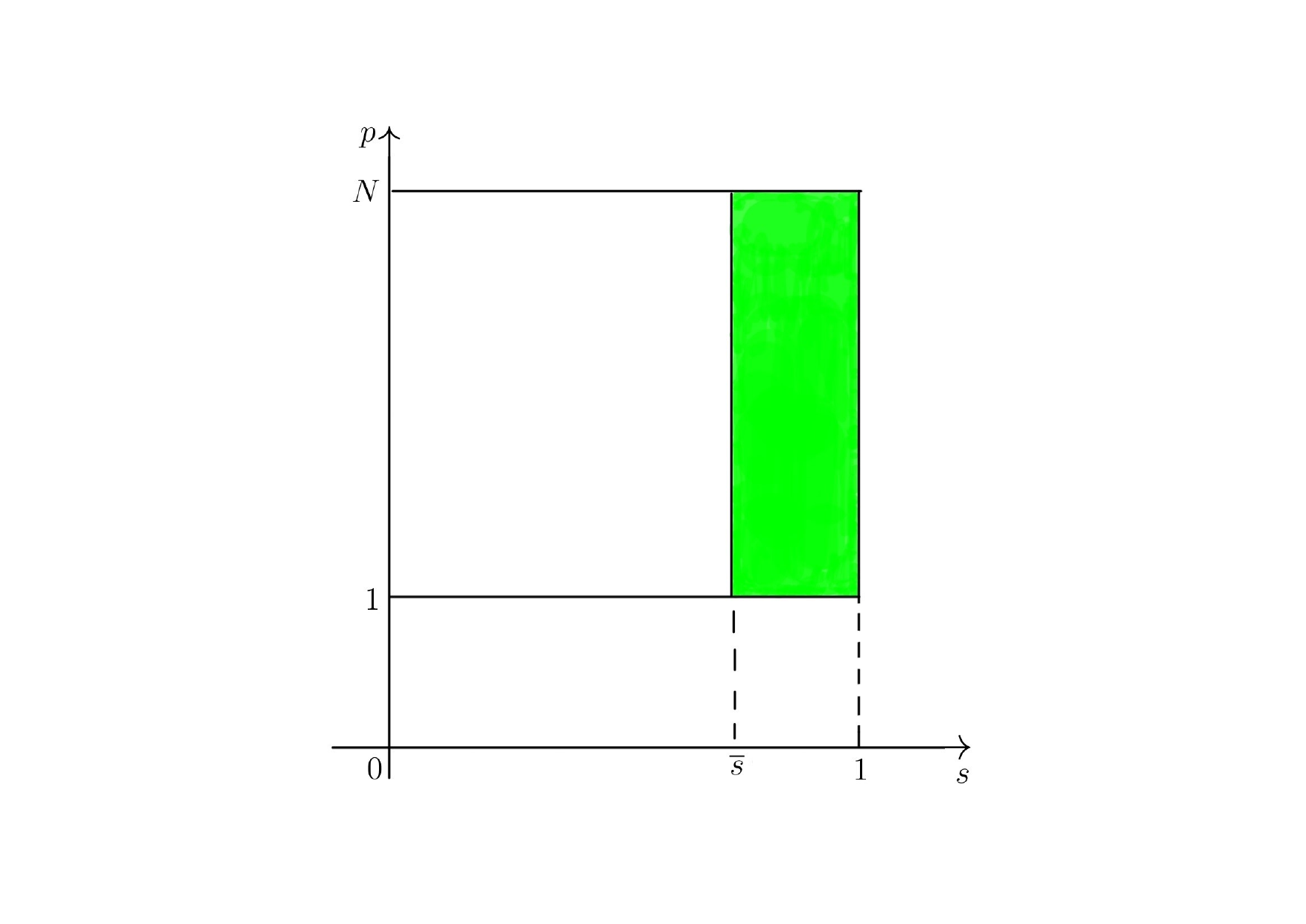}
\end{center}
\caption{The set~$[\overline s,1]\times (1, N)$ (in green) introduced in~\eqref{ipotesimu11}.}
\label{green}
\end{figure}

Under these assumptions, we consider the signed measure~$\mu$ given by
\begin{equation}\label{mudefinition}
\mu:=\mu^+-\mu^-.
\end{equation}
The main operator of interest for us takes the form
\begin{equation}\label{elle}
L_\mu u:=\iint_\Sigma (-\Delta)_p^s \,u\,d\mu(s,p).
\end{equation}

We point out that the above double integration may happen with variable signs, since~$\mu$ is a signed measure with possibly negative components induced by~$\mu^-$. Nonetheless, condition~\eqref{ipotesimu11} requires that the negative components do not impact the set~$[\overline s,1]\times (1, N)$ (corresponding, roughly speaking, to the ``large'' fractional exponents). This feature is illustrated in Figure~\ref{green}.

In our setting, when~$s\in(0,1)$, the fractional~$p$-Laplacian~$(- \Delta)_p^s$ is the nonlinear nonlocal operator defined on smooth functions by
\[
(- \Delta)_p^s\, u(x) := c_{N,s,p} \,\lim_{\varepsilon \searrow 0} \int_{\R^N \setminus B_\varepsilon(x)} \frac{|u(x) - u(y)|^{p-2}\, (u(x) - u(y))}{|x - y|^{N+sp}}\, dy.
\]
The exact value of the positive normalizing constant~$c_{N,s,p}$ is not important for us, except for allowing
a consistent setting for the limit cases, namely that
\[
\lim_{s\searrow0}(- \Delta)_p^s\, u=(- \Delta)_p^0\, u:=u
\]
and
\[
\lim_{s\nearrow1}(- \Delta)_p^s\, u=(- \Delta)_p^1\, u:=-\Delta_pu=-{\operatorname{div}}(|\nabla u|^{p-2}\nabla u).
\]
However, one can take
\begin{equation}\label{cnsp}
c_{N,s,p}:=\frac{s\,2^{2s}\,\Gamma\left(\frac{ps+p+N-2}{2}\right)}{\pi^{N/2}\,\Gamma(1-s)},
\end{equation}
see e.g.~\cite[page~130]{MR3473114} and the references therein.

In this setting, we consider a bounded open set~$\Omega\subset\R^N$
with smooth boundary.
Our aim is to give existence results for problems of the type
\begin{equation}\label{P1} 
\begin{cases}
L_\mu u= f(x, u) &{\mbox{ in }}\Omega,\\
u=0&{\mbox{ in }}\R^N\setminus\Omega,
\end{cases}
\end{equation}
being~$L_\mu$ as in~\eqref{elle}. Here, $f:\Omega\times \R \to\R$ is a function satisfying some suitable assumptions that we will list later.

In addition to~\eqref{mudefinition}, we assume that there exists a signed measure~$\mu_s:=\mu_s^+-\mu_s^-$ and a nonnegative measure~$\mu_p$, defined
respectively on~$[0,1]$ and~$(1,N)$, such that
\begin{equation}\label{ipotesimuprodotto}
\mu=\mu^+-\mu^-
=\mu_s^+ \times \mu_p - \mu_s^- \times \mu_p.
\end{equation}

A special case of problem~\eqref{P1} occurs when~$\mu^+$ has a ``discrete'' structure, consisting of the sum of finitely many Dirac's measures.
In light of~\eqref{ipotesimuprodotto}, both~$\mu_s^+$ and~$\mu_p$
have to be a finite sum of Dirac's measures. 

More precisely, let~$n$, $m\in\N\setminus\{0\}$, $s_1,\dots, s_n\in [0, 1]$ and~$p_1,\dots, p_m\in (1, N)$ and take
\begin{equation}\label{muvbdshail}
\mu:=\sum_{{i=1,\dots,n}\atop{j=1,\dots,m }}\delta_{(s_i, p_j)} 
-\mu^-_s\times\sum_{j=1}^m \delta_{p_j}.
\end{equation}

In this case, we will first look at the problem
\begin{equation}\label{problemaW} 
\begin{cases}
\displaystyle \sum_{{i=1,\dots,n}\atop{j=1,\dots,m }} (-\Delta)^{s_i}_{p_j} u -\displaystyle\sum_{j=1}^m\,\int_{[0,1]} (-\Delta)^s_{p_j} u \, d\mu_s^-(s) = g(x) &{\mbox{ in }}\Omega,\\
u=0&{\mbox{ in }}\R^N\setminus\Omega.
\end{cases}
\end{equation}

Moreover, we set
\begin{equation}\label{defphatr843967}
\widehat{p}:=\max\{p_1,\dots, p_m\}
\end{equation}
and we assume that~$g\in L^{\widehat{p}_\star}(\Omega)$, being~$\widehat{p}_\star$ the H\"older conjugate of~$\widehat{p}$.

Reducing ourselves to the study of~\eqref{problemaW} rather than the more general~\eqref{P1} is often
necessary in our setting since the case in which~$\mu^+$ is, say, a convergent series of Dirac's measures
offers additional difficulties at the level of functional analysis. Specifically,
in such a level of generality, the associated energy functional may not be
well defined on the naturally associated functional space, as we will clarify with explicit examples (see Appendix~\ref{appendice}).

\subsection{Main results}

Our first set of results deal with problem~\eqref{problemaW}.
The main construction in this setting is given by the following. We consider the measure~$\mu$ defined in~\eqref{muvbdshail}.

We point out that, taking~$\overline{s}:=\max\{s_1,\dots,s_n\}$,
we have that
$$\mu^+ \Big([\overline s,1]\times (1, N)\Big)
=\delta_{\overline s}\times \sum_{j=1}^{m }\delta_{ p_j}>0,$$
and therefore 
the assumption in~\eqref{ipotesimu11}
boils down to asking that
\begin{equation}\label{anchequestat4390}
\mu^-_s([\overline s,1])=0.
\end{equation}

Moreover, we assume that
there exist some constants~$\delta$, $\eta>0$ and~$\gamma>0$ sufficiently small (to be taken as in~\eqref{gammabound}), such that
\begin{equation}\label{ipotesimus}
\mu_s^-([0,\overline{s}) )\le \gamma \mu_s^+([\overline{s},1])
\end{equation}
and
\begin{equation}\label{ipotesimup}
\mbox{supp}(\mu_p)\subset[1+\delta,N-\eta].
\end{equation}

In this setting, the following result holds true:

\begin{theorem}\label{Weierstrass}
There exists~$\gamma_0>0$ such that, if~$\gamma\in[0,\gamma_0]$, the
following statement holds true.

Let~$\mu$ be as in~\eqref{muvbdshail} and $g\in L^{\widehat{p}_\star}(\Omega)$. Assume that~\eqref{anchequestat4390}, \eqref{ipotesimus} and~\eqref{ipotesimup}
hold true.

Then, there exists a weak solution of problem~\eqref{problemaW} corresponding to a global minimizer of the associated energy functional.

Moreover, if~$\mu^-\equiv0$, then the solution is unique.
\end{theorem}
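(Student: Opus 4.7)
The strategy is the direct method of the calculus of variations. To problem \eqref{problemaW} I would associate the energy functional
\[
\mathcal{E}(u) := \sum_{i=1}^{n}\sum_{j=1}^{m} \frac{c_{N,s_i,p_j}}{2p_j}[u]_{s_i,p_j}^{p_j} - \sum_{j=1}^{m}\int_{[0,1]} \frac{c_{N,s,p_j}}{2p_j}[u]_{s,p_j}^{p_j}\,d\mu_s^-(s) - \int_\Omega g(x)\,u(x)\,dx,
\]
where $[\,\cdot\,]_{s,p}$ denotes the Gagliardo seminorm (with $s=0$ and $s=1$ interpreted through the limit conventions fixed after \eqref{elle}), chosen so that the first variation yields the weak formulation of \eqref{problemaW}. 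The natural working space is
\[
X := \bigcap_{i=1,\dots,n}\;\bigcap_{j=1,\dots,m} W^{s_i,p_j}_0(\Omega),
\]
equipped with the norm $\|u\|_X := \sum_{i,j}[u]_{s_i,p_j}$. First I would verify that $\mathcal{E}$ is well defined and Fr\'echet differentiable on $X$: hypothesis \eqref{ipotesimup} ensures that the constants $c_{N,s,p_j}$ stay uniformly bounded on $\mathrm{supp}(\mu_p)$, while $g\in L^{\widehat p_\star}(\Omega)$ combined with the fractional Sobolev embedding into $L^{\widehat p}(\Omega)$ makes the linear term continuous.

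The main obstacle is \emph{coercivity}, since the $\mu_s^-$ term works against the positive double sum. The decisive ingredient is the Gagliardo-seminorm comparison, valid on the bounded domain $\Omega$ for every $u\in W^{\overline s,p_j}_0(\Omega)$ and every $s\in[0,\overline s)$:
\[
[u]_{s,p_j}^{p_j} \le C_j\,\bigl([u]_{\overline s,p_j}^{p_j}+\|u\|_{L^{p_j}(\Omega)}^{p_j}\bigr),
\]
with $C_j$ independent of $s$. Integrating against $d\mu_s^-(s)$, using \eqref{ipotesimus}, Poincar\'e's inequality on $W^{\overline s,p_j}_0(\Omega)$, and recalling that the index $(\overline s,p_j)$ appears in the positive double sum (because $\overline s=\max_i s_i$), the negative contribution is dominated by $C\gamma\sum_j[u]_{\overline s,p_j}^{p_j}$. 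Choosing $\gamma_0$ small enough that $C\gamma_0$ is strictly less than the corresponding positive coefficient produces a clean absorption, and the linear term is then handled via Young's inequality, giving $\mathcal{E}(u)\to+\infty$ as $\|u\|_X\to+\infty$.

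Next I would establish sequential weak lower semicontinuity of $\mathcal{E}$ on $X$. The positive double sum is convex and continuous, hence weakly lower semicontinuous. For the penalising term, along $u_n\rightharpoonup u$ in $X$, the compact embedding of $W^{\overline s,p_j}_0(\Omega)$ into $W^{s,p_j}(\Omega)$ for $s<\overline s$ gives pointwise-in-$s$ convergence of $[u_n]_{s,p_j}^{p_j}$ to $[u]_{s,p_j}^{p_j}$, while the interpolation inequality above supplies a uniform-in-$s$ $\mu_s^-$-integrable dominating function. Lebesgue's dominated convergence in $d\mu_s^-$ then shows that this term is in fact weakly continuous. Weierstrass' theorem now produces a global minimizer $u_*\in X$ of $\mathcal{E}$, and the Fr\'echet differentiability computation identifies it with a weak solution of \eqref{problemaW}.

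For the uniqueness statement, when $\mu^-\equiv 0$ the functional $\mathcal{E}$ reduces to a sum of strictly convex terms $u\mapsto \tfrac{c_{N,s_i,p_j}}{2p_j}[u]_{s_i,p_j}^{p_j}$ (strict convexity coming from $p_j>1$ applied to the pointwise Gagliardo integrand, where equality forces $u_1-u_2$ to be constant and hence identically zero by the exterior Dirichlet condition), plus a linear term. Strict convexity then forces the minimizer to be unique.
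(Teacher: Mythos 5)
Your proposal is correct and follows essentially the same route as the paper: you work with the same energy functional on the space $\mathcal{X}_\mu(\Omega)=\bigcap_{i,j}W^{s_i,p_j}_0(\Omega)$, reabsorb the $\mu_s^-$ contribution into the $(\overline s,p_j)$ terms via seminorm comparison and \eqref{ipotesimus}, establish weak lower semicontinuity (the paper uses the Brezis--Mironescu interpolation inequality to pass to the limit in the $d\mu_s^-$ integral rather than your ``compact embedding into $W^{s,p_j}$ plus dominated convergence,'' but the two are interchangeable here), invoke the direct method, and conclude uniqueness by strict convexity of the Gagliardo integrand. The one place you wave your hands is coercivity: ``handled via Young's inequality'' glosses over the fact that the positive part is $\sum_{i,j}\tfrac{1}{2p_j}[u]^{p_j}_{s_i,p_j}$ with \emph{different} exponents $p_j$, which cannot be compared to the linear term $C\|u\|_\mu$ by a single Young application; the paper handles this by isolating the set $K$ of indices with $[u]_{s_i,p_j}\ge 1$ and working with $p_{\min}=\min_{(i,j)\in K}p_j$, or more simply one notes that each map $t\mapsto \tfrac{1}{2p_j}t^{p_j}-Ct$ is bounded below and tends to $+\infty$, so $J(u)\to+\infty$ whenever some seminorm diverges. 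The idea is right, but the invocation of Young's inequality as stated does not directly close the step.
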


The wide generality of Theorem~\ref{Weierstrass} allows us to provide the existence of a nontrivial solution in many  particular cases of interest, each of which is new in the literature.

\begin{corollary}\label{cor1}
Let~$0<s_2 < s_1 \le1$ and~$1<p<N$. Let~$\alpha\in\R$ and~$g\in L^{\frac{p}{p-1}}(\Omega)$.

Then, there exists~$\alpha_0>0$ depending on~$N$ and~$\Omega$ such that if~$\alpha<\alpha_0$, then the problem
\[
\begin{cases}
(-\Delta)^{s_1}_p u -\alpha(-\Delta)^{s_2}_p u= g(x) &{\mbox{ in }}\Omega,\\
u=0&{\mbox{ in }}\R^N\setminus\Omega
\end{cases}
\]
admits a nontrivial solution corresponding to a global minimizer of the associated energy functional. 

In addition, the problem
\[
\begin{cases}
(-\Delta)^{s_1}_p u = g(x) &{\mbox{ in }}\Omega,\\
u=0&{\mbox{ in }}\R^N\setminus\Omega
\end{cases}
\]
has a unique solution.
\end{corollary}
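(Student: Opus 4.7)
The strategy is to cast the equation in Corollary~\ref{cor1} as a particular instance of problem~\eqref{problemaW} and then invoke Theorem~\ref{Weierstrass}. Concretely, I would take $n=m=1$, $p_1 := p$, and choose the factors in the product structure~\eqref{ipotesimuprodotto} as
\[
\mu_s^+ := \delta_{s_1}, \qquad \mu_s^- := \alpha\,\delta_{s_2}, \qquad \mu_p := \delta_p.
\]
This yields $\mu = \delta_{(s_1,p)} - \alpha\,\delta_{(s_2,p)}$, so that
\[
L_\mu u = (-\Delta)^{s_1}_p u - \alpha\,(-\Delta)^{s_2}_p u,
\]
which is exactly the left-hand side of the equation in the corollary. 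For $\alpha \le 0$, one instead absorbs the lower-order term into $\mu^+$ with weight $|\alpha|$, obtaining $\mu^-\equiv 0$ and the argument below simplifies.

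The next step is the verification of the hypotheses of Theorem~\ref{Weierstrass} with the choice $\overline{s} := s_1$. Since $s_2 < s_1$, the atom of $\mu_s^-$ lies in $[0,\overline{s})$, so $\mu_s^-([\overline{s},1]) = 0$, which is~\eqref{anchequestat4390}. For~\eqref{ipotesimus}, one computes $\mu_s^-([0,\overline{s})) = \alpha$ and $\mu_s^+([\overline{s},1]) = 1$, so the condition becomes $\alpha \le \gamma$; setting $\alpha_0 := \gamma_0$, where $\gamma_0$ is given by Theorem~\ref{Weierstrass}, ensures this for every $\alpha \in (0,\alpha_0)$, and it holds trivially for $\alpha \le 0$. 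Finally,~\eqref{ipotesimup} is satisfied by taking, for example, $\delta := (p-1)/2$ and $\eta := (N-p)/2$, since $p \in (1,N)$.

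Applying Theorem~\ref{Weierstrass} then furnishes a weak solution realized as a global minimizer of the associated energy functional, which gives the first assertion (nontriviality is automatic whenever $g \not\equiv 0$, since $L_\mu 0 = 0$ would contradict $L_\mu u = g$). For the second problem, one simply chooses $\alpha = 0$, so that $\mu^- \equiv 0$ and the uniqueness clause of Theorem~\ref{Weierstrass} applies directly. The whole argument is essentially a bookkeeping exercise; the only point that deserves care is the correct identification of~$\overline{s}$, which must be taken as the \emph{larger} exponent $s_1$, so that the mass of $\mu_s^-$ is confined to the range~$[0,\overline{s})$ where the smallness assumption~\eqref{ipotesimus} can be invoked.
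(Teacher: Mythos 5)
Your proposal is correct and follows essentially the same route as the paper. The paper factors the argument through the intermediate Corollary~\ref{corstrano1} (itself proved by instantiating Theorem~\ref{Weierstrass}), and then obtains Corollary~\ref{cor1} by taking $n=m=l=1$, $\widehat p = p$, $\overline s = s_1$, $\beta=1$; you instead instantiate Theorem~\ref{Weierstrass} directly with $\mu_s^+ = \delta_{s_1}$, $\mu_s^- = \alpha\,\delta_{s_2}$, $\mu_p = \delta_p$, $\overline s = s_1$, which yields the same measure $\mu=\delta_{(s_1,p)}-\alpha\,\delta_{(s_2,p)}$ and the same verification of~\eqref{anchequestat4390}, \eqref{ipotesimus} and~\eqref{ipotesimup}. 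Your explicit treatment of the case $\alpha\le 0$ (absorbing the lower-order term into $\mu^+$) and the nontriviality remark are small sound additions that the paper leaves implicit.
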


\begin{corollary}\label{cor3}
Let~$s_k$ be a sequence of real numbers such that~$0\leq s_k<1$ for all~$k\in \N$.
Let~$\alpha_k$ be a sequence of real numbers.
Let~$1<p<N$ and
let~$g\in L^{\frac{p}{p-1}}(\Omega)$.

Then, there exists~$\alpha_0>0$ depending on~$N$ and~$\Omega$ such that if 
\[
\sum_{k=1}^{+\infty} \alpha_k<\alpha_0,
\]
then the problem
\[
\begin{cases}
\displaystyle-\Delta_p u - \sum_{k=1}^{+\infty}\alpha_k(-\Delta)^{s_k}_p u= g(x) &{\mbox{ in }}\Omega,\\
u=0&{\mbox{ in }}\R^N\setminus\Omega
\end{cases}
\]
admits a nontrivial solution corresponding to a global minimizer of the associated energy functional. 
\end{corollary}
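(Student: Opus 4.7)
The plan is to recognize Corollary~\ref{cor3} as a direct application of Theorem~\ref{Weierstrass} via a careful choice of measures. First, I would set $\overline{s}:=1$, $n:=1$, $s_1:=1$, $m:=1$, and $p_1:=p$, so that the ``positive'' piece in~\eqref{muvbdshail} reduces to the single Dirac mass $\delta_{(1,p)}$, encoding the classical $-\Delta_p$ term. For the remaining structure I would take
\[
\mu_s^{-}:=\sum_{k=1}^{+\infty}\alpha_k\,\delta_{s_k}\quad\text{on } [0,1]\qquad\text{and}\qquad\mu_p:=\delta_p\quad\text{on }(1,N).
\]
Under the corollary's hypothesis $\sum_{k}\alpha_k<\alpha_0<+\infty$ (and implicitly $\alpha_k\ge 0$), the measure $\mu_s^-$ is a well-defined finite Borel measure on~$[0,1]$.

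Next, I would verify the three structural assumptions of Theorem~\ref{Weierstrass}. Condition~\eqref{anchequestat4390} holds trivially because $s_k<1$ for every $k$, so $\mu_s^-(\{1\})=0$. For condition~\eqref{ipotesimus}, upon setting $\alpha_0:=\gamma_0$, one obtains
\[
\mu_s^-\bigl([0,\overline{s})\bigr)=\sum_{k=1}^{+\infty}\alpha_k<\gamma_0=\gamma\,\mu_s^+\bigl([\overline{s},1]\bigr),
\]
where I used $\mu_s^+([\overline{s},1])=1$. Condition~\eqref{ipotesimup} follows by the choice $\delta:=(p-1)/2$ and $\eta:=(N-p)/2$, which are positive because $p\in(1,N)$ and give $\mathrm{supp}(\mu_p)=\{p\}\subset[1+\delta,N-\eta]$.

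Unfolding the definition of $L_\mu$ in~\eqref{elle} and using the finiteness of $\sum_k\alpha_k$ to interchange the series with the integral against $\mu_s^-\times\delta_p$, one finds
\[
L_\mu u=(-\Delta)^{1}_p u-\sum_{k=1}^{+\infty}\alpha_k(-\Delta)^{s_k}_p u=-\Delta_p u-\sum_{k=1}^{+\infty}\alpha_k(-\Delta)^{s_k}_p u,
\]
which matches the left-hand side of the PDE in the statement. Since $\widehat{p}=p$, the hypothesis $g\in L^{p/(p-1)}(\Omega)$ coincides with $g\in L^{\widehat{p}_\star}(\Omega)$, and hence Theorem~\ref{Weierstrass} delivers a weak solution obtained as a global minimizer of the associated energy functional; this solution is nontrivial whenever $g\not\equiv 0$, for otherwise $u\equiv 0$ would force $g\equiv 0$.

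The main difficulty is not a delicate estimate but a \emph{bookkeeping} point: I must ensure that a countable (rather than finite) combination of Dirac masses in $\mu_s^-$ still falls within the scope of~\eqref{muvbdshail}, and that condition~\eqref{ipotesimus} furnishes a genuinely uniform smallness bound regardless of the number of atoms. The hypothesis $\sum_k\alpha_k<\alpha_0$ packages both requirements into a single scalar smallness condition, so the reduction to Theorem~\ref{Weierstrass} goes through cleanly.
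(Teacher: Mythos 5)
Your proof is correct and follows essentially the same route as the paper: both reduce the statement to Theorem~\ref{Weierstrass} by taking $\mu^+ = \delta_{(1,p)}$, $\mu_s^- = \sum_k \alpha_k \delta_{s_k}$, $\mu_p = \delta_p$, and checking~\eqref{anchequestat4390}, \eqref{ipotesimus}, \eqref{ipotesimup} exactly as you do. The paper merely inserts the intermediate Corollary~\ref{corstrano2} (with $n=m=1$, $\overline s=1$, $\beta=1$) before invoking Theorem~\ref{Weierstrass}, which is a bookkeeping layer rather than a different argument.
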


\begin{corollary}\label{cor4}
Let~$\omega:[0,1] \to \R$ be any measurable function.
Let~$1<p<N$ and
let~$g\in L^{\frac{p}{p-1}}(\Omega)$.

Then, there exists~$\alpha_0>0$ depending on~$N$ and~$\Omega$ such that if 
\[
\int_0^1 \omega(s)\,ds<\alpha_0,
\]
then the problem
\[
\begin{cases}
\displaystyle-\Delta_p u - \, \int_0^1 \omega(s) (-\Delta)^{s}_{p} u \, ds= g(x) &{\mbox{ in }}\Omega,\\
u=0&{\mbox{ in }}\R^N\setminus\Omega
\end{cases}
\]
admits a nontrivial solution corresponding to a global minimizer of the associated energy functional.
\end{corollary}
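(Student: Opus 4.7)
The plan is to realize the left-hand side of the equation as $L_\mu u$ for a measure $\mu$ of the form \eqref{muvbdshail}, and then invoke Theorem \ref{Weierstrass}. Concretely, I would take $n=m=1$, $s_1 := 1$, $p_1 := p$, set $\mu_s^+ := \delta_1$, $\mu_p := \delta_p$, and let $\mu_s^-$ be the measure on $[0,1]$ whose density with respect to Lebesgue measure is $\omega$ (read as nonnegative; a sign change of $\omega$ only enhances coercivity and can be absorbed at the functional-analytic level via the decomposition $\omega = \omega^+ - \omega^-$). Writing
\[
\mu := \mu_s^+ \times \mu_p - \mu_s^- \times \mu_p,
\]
a direct computation shows
\[
L_\mu u = (-\Delta)^1_p u - \int_0^1 \omega(s)\,(-\Delta)^s_p u\, ds = -\Delta_p u - \int_0^1 \omega(s)\,(-\Delta)^s_p u\, ds,
\]
so the corollary's equation is exactly an instance of problem \eqref{problemaW}.

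The next step is to verify the hypotheses of Theorem \ref{Weierstrass} with $\overline s := 1$. Condition \eqref{anchequestat4390} reduces to $\mu_s^-(\{1\}) = 0$, immediate since $\{1\}$ has Lebesgue measure zero. Condition \eqref{ipotesimup} is satisfied by choosing $\delta := (p-1)/2$ and $\eta := (N-p)/2$, since then $\operatorname{supp}(\mu_p) = \{p\} \subset [1+\delta, N-\eta]$ by the assumption $1 < p < N$. For \eqref{ipotesimus}, one has $\mu_s^+([\overline s, 1]) = 1$ and $\mu_s^-([0, \overline s)) = \int_0^1 \omega(s)\, ds$, so the requirement becomes $\int_0^1 \omega(s)\, ds \le \gamma$. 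Setting $\alpha_0$ equal to the threshold $\gamma_0$ of Theorem \ref{Weierstrass} associated to the chosen $\delta$ and $\eta$ (hence depending on $N$, $p$, and $\Omega$), the hypothesis $\int_0^1 \omega(s)\, ds < \alpha_0$ yields \eqref{ipotesimus}. Finally, $g \in L^{p/(p-1)}(\Omega) = L^{\widehat{p}_\star}(\Omega)$ since $\widehat p = p$.

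With all assumptions in place, Theorem \ref{Weierstrass} produces a weak solution of the stated equation corresponding to a global minimizer of the associated energy functional, which is precisely the conclusion of the corollary. The only genuinely conceptual step is the first, namely viewing the continuous integral $\int_0^1 \omega(s)(-\Delta)^s_p u\, ds$ as the $\mu_s^-$ factor of a product measure on $\Sigma$ rather than as a discrete superposition; once this identification is made, every hypothesis of Theorem \ref{Weierstrass} is checked by inspection and no analytic difficulty remains. Nontriviality, where applicable, follows because $L_\mu 0 = 0$, so if $g \not\equiv 0$ the minimizer cannot vanish identically.
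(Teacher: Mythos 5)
Your proposal is correct and takes essentially the same approach as the paper. The only difference is organizational: the paper first states a more general intermediate result (Corollary~\ref{corstrano3}, with general~$n$, $m$, weights~$\beta_j$, and~$\overline s$) and then derives Corollary~\ref{cor4} as the special case~$n=m=1$, $\overline s = 1$, $\beta = 1$, $\widehat p = p$; you inline that argument and invoke Theorem~\ref{Weierstrass} directly, with the same choice of product measure~$\mu = \delta_{(1,p)} - (\omega\,ds)\times\delta_p$ and the same verification of~\eqref{anchequestat4390}, \eqref{ipotesimus}, \eqref{ipotesimup}.
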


The second set of our results deal with the case in which
the negative part of the measure~$\mu$ vanishes,
namely~$\mu^-\equiv 0$. In this circumstance, we address the problem
\begin{equation}\label{senzamumeno} 
\begin{cases}
\displaystyle\sum_{k=1}^m (-\Delta)^{s_k}_{p_k} u = f(x, u) &{\mbox{ in }}\Omega,\\
u=0&{\mbox{ in }}\R^N\setminus\Omega.
\end{cases}
\end{equation}
Here~$m\in\N\setminus\{0\}$, $s_1,\dots, s_m\in [0, 1]$ and~$p_1,\dots, p_m\in (1, N)$. 
Moreover, we assume that~$f:\Omega\times\R\to\R$ is a Carath\'eodory function 
satisfying a suitable subcritical growth.

More precisely, we pick~$s_\sharp\in\{s_1,\dots,s_m\}$
and~$ p_\sharp\in\{p_1,\dots, p_m\}$ 
in order to maximize the critical exponent\footnote{We point out that, since~$s_\sharp\in[0,1]$ and~$p_\sharp\in(1,N)$, we have that~$N-s_\sharp p_\sharp\ge N-p_\sharp>0$.}
\begin{equation*}
(p_\sharp)^{*}_{s_\sharp} := \frac{N p_\sharp}{N-s_\sharp p_\sharp}.
\end{equation*}
In this way, we have that, for all~$k\in\{1,\dots,m\}$,
$$ p_k< \frac{N p_k}{N-s_k p_k}\le \frac{N p_\sharp}{N-s_\sharp p_\sharp}
=(p_\sharp)^{*}_{s_\sharp}$$
and therefore, recalling~\eqref{defphatr843967},
$$ \widehat{p}<(p_\sharp)^{*}_{s_\sharp}.$$

With this notation,
we require~$f$ to satisfy the following conditions:
\begin{equation}\label{AR1}
\begin{split}
&\mbox{there exist some constants~$a_1$, $a_2>0$ and~$q\in (\widehat{p}, (p_\sharp)^*_{s_\sharp})$ such that} \\
&|f(x,t)|\leq a_1+a_2|t|^{q-1} \quad\mbox{for any } t\in \R \mbox{ and for a.e. } x\in \Omega,\\
\end{split}
\end{equation}
and
\begin{equation}\label{AR2}
\lim_{t\to 0}\frac{f(x,t)}{|t|^{\widehat{p}-2} \,t}=0  \quad \mbox{ uniformly for
a.e. } x\in \Omega.
\end{equation}
Moreover, setting
\begin{equation}\label{definizioneF}
F(x,t):=\int_0^t f(x,\tau)\,d\tau,
\end{equation}
we assume that the Ambrosetti--Rabinowitz condition holds true, i.e.
\begin{equation}\label{AR3}
\begin{split}
&\mbox{there exist~$\vartheta>\widehat{p}$ and~$r\geq 0$ such that}\\
&0<\vartheta F(x,t)\leq f(x,t)t  \quad\mbox{for any } t \mbox{ such that } |t|> r \mbox{ and for a.e. } x\in \Omega.
\end{split}
\end{equation}
In addition, we suppose that 
\begin{equation}\label{AR4}
\begin{split}
&\mbox{there exist~$\widetilde{\vartheta}>\widehat{p}$, $a_3>0$ and~$a_4\in L^1(\Omega)$ such that}\\
&F(x,t)\geq a_3|t|^{\widetilde{\vartheta}}-a_4(x).
\end{split}
\end{equation}
The result that we prove in this context reads as follows:

\begin{theorem}\label{MPT}
Let~$f:\Omega\times \R \to \R$ satisfy~\eqref{AR1}, \eqref{AR2}, \eqref{AR3} and~\eqref{AR4}.

Then, there exists a nontrivial weak solution of problem~\eqref{senzamumeno} of Mountain Pass type.
\end{theorem}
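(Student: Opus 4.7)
The plan is to apply the classical Mountain Pass Theorem of Ambrosetti--Rabinowitz to the energy functional naturally associated with \eqref{senzamumeno}. Since $\mu^-\equiv 0$ and $\mu^+=\sum_{k=1}^m\delta_{(s_k,p_k)}$, this functional reads
$$I(u):=\sum_{k=1}^m\frac{c_{N,s_k,p_k}}{2p_k}\iint_{\R^{2N}}\frac{|u(x)-u(y)|^{p_k}}{|x-y|^{N+s_k p_k}}\,dx\,dy-\int_\Omega F(x,u)\,dx,$$
defined on the intersection space $X:=\bigcap_{k=1}^m W^{s_k,p_k}_0(\Omega)$ (with the usual adjustment if $s_k\in\{0,1\}$), endowed with the sum of the Gagliardo seminorms. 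Because the positive-measure case is contained in the functional framework already developed in the paper, $X$ is a reflexive Banach space on which $I$ is well defined and of class $C^1$, and its critical points are precisely the weak solutions of \eqref{senzamumeno}.

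Next I would check the mountain-pass geometry. From \eqref{AR1} and \eqref{AR2}, for every $\e>0$ there exists $C_\e>0$ with $|F(x,t)|\le\e|t|^{\widehat p}+C_\e|t|^q$. Since $(s_\sharp,p_\sharp)$ is among the $(s_k,p_k)$, the continuous embeddings $X\hookrightarrow W^{s_\sharp,p_\sharp}_0(\Omega)\hookrightarrow L^r(\Omega)$ hold for every $r\in[1,(p_\sharp)^{*}_{s_\sharp}]$, so using $q\in(\widehat p,(p_\sharp)^{*}_{s_\sharp})$ and choosing $\e$ small I obtain
$$I(u)\ge c_1\|u\|_X^{\widehat p}-c_2\|u\|_X^{q}$$
for $\|u\|_X$ small; as $q>\widehat p$, this yields $\rho,\alpha>0$ with $I\ge\alpha$ on $\{\|u\|_X=\rho\}$. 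For the second geometric condition, fix $\varphi\in C^\infty_c(\Omega)\setminus\{0\}$: the diffusion terms grow at most like $\sum_k t^{p_k}\le C(1+t^{\widehat p})$ for $t\ge 0$, whereas \eqref{AR4} delivers $\int_\Omega F(x,t\varphi)\,dx\ge a_3 t^{\widetilde\vartheta}\int_\Omega|\varphi|^{\widetilde\vartheta}\,dx-\|a_4\|_{L^1(\Omega)}$, and since $\widetilde\vartheta>\widehat p$ one has $I(t\varphi)\to-\infty$ as $t\to+\infty$, producing an $e\in X$ with $\|e\|_X>\rho$ and $I(e)<0$.

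The core technical step, and the main obstacle of the proof, is the Palais--Smale condition at every level. Given $(u_n)\subset X$ with $I(u_n)\to c$ and $I'(u_n)\to 0$ in $X^{*}$, testing against $u_n$ and invoking \eqref{AR3} in the usual fashion leads, after splitting the integral into $\{|u_n|\le r\}$ and $\{|u_n|>r\}$, to
$$c+1+o(\|u_n\|_X)\ge\sum_{k=1}^m\Bigl(\frac{1}{p_k}-\frac{1}{\vartheta}\Bigr)\frac{c_{N,s_k,p_k}}{2}[u_n]_{s_k,p_k}^{p_k}-C,$$
which forces each Gagliardo seminorm, and hence $\|u_n\|_X$, to be bounded; the assumption $\vartheta>\widehat p\ge p_k$ for \emph{every} $k$ is crucial here. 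By reflexivity, up to a subsequence $u_n\rightharpoonup u$ in $X$, and by the compact embedding $W^{s_\sharp,p_\sharp}_0(\Omega)\hookrightarrow\!\hookrightarrow L^q(\Omega)$ combined with \eqref{AR1}, the nonlinear contribution $\int_\Omega f(x,u_n)(u_n-u)\,dx$ tends to zero. Writing $\langle I'(u_n)-I'(u),u_n-u\rangle\to 0$, the standard monotonicity inequality for $|a|^{p-2}a-|b|^{p-2}b$ applied separately to each pair $(s_k,p_k)$ then yields $[u_n-u]_{s_k,p_k}\to 0$ for every $k$, hence $u_n\to u$ strongly in $X$. The Mountain Pass Theorem now produces a critical point $u\in X$ at level $c\ge\alpha>0=I(0)$, which is therefore a nontrivial weak solution of \eqref{senzamumeno}.
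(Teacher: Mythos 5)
Your proposal follows essentially the same architecture as the paper: energy functional on the intersection space, verification of the mountain-pass geometry (Propositions~\ref{geo1} and~\ref{geo2}), boundedness of Palais--Smale sequences (Proposition~\ref{PropPS1}), strong convergence of bounded Palais--Smale sequences (Proposition~\ref{PropPS2}), and then the Mountain Pass Theorem. The geometry and the boundedness steps coincide with the paper's; the paper is a bit more explicit in handling the different exponents~$p_k$ in the coercivity of the quadratic part, via the index set~$K:=\{k:[u_n]_{s_k,p_k}\ge1\}$ together with~\eqref{elementaryinequality}, but your sketch captures the same mechanism.

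The one place where you take a genuinely different route is the strong convergence of the bounded PS sequence. You pass to~$\langle I'(u_n)-I'(u),u_n-u\rangle\to0$ and invoke the pointwise monotonicity inequality for~$a\mapsto|a|^{p-2}a$, term by term, to deduce~$[u_n-u]_{s_k,p_k}\to0$ for each~$k$. The paper instead uses convexity of each~$[\cdot]_{s_k,p_k}^{p_k}$ together with Fatou's Lemma to obtain~$\|u_n\|_\mu\to\|u\|_\mu$, and then upgrades weak convergence plus norm convergence to strong convergence via uniform convexity of~$\mathcal X_\mu(\Omega)$ (Proposition~\ref{uniformeconvesso}, i.e.\ the Radon--Riesz property). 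Both are legitimate. A caveat about your version: the estimate~$\langle|a|^{p-2}a-|b|^{p-2}b,a-b\rangle\gtrsim|a-b|^p$ is valid only for~$p\ge2$. Since the~$p_k$ range over all of~$(1,N)$, the case~$1<p_k<2$ must be handled with the modified inequality~$\langle|a|^{p-2}a-|b|^{p-2}b,a-b\rangle\gtrsim(|a|+|b|)^{p-2}|a-b|^2$, which does not directly yield~$[u_n-u]_{s_k,p_k}\to0$; one still needs a H\"older step exploiting the boundedness of~$[u_n]_{s_k,p_k}$. This is standard but should be made explicit, and the paper's uniform-convexity route avoids this case distinction altogether.

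Otherwise the argument is correct; in particular the use of~$q\in(\widehat p,(p_\sharp)^*_{s_\sharp})$ for the small-norm estimate, the~$\widetilde\vartheta>\widehat p$ comparison for unboundedness below along a ray, and the role of~$\vartheta>\widehat p\ge p_k$ in the Ambrosetti--Rabinowitz step all match the paper.
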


We stress that, the situation considered in this paper is new, since, to the best of our knowledge, the literature on problem~\eqref{senzamumeno} is limited to the particular case in which~$m=2$, $p_2>p_1$ and~$s_2>s_1$, see e.g.~\cite{MR3910033}, and
our approach allows us to deal with cases which had remained uncovered. For instance,  we present the following scenario which comes as an easy consequence of Theorem~\ref{MPT}.

\begin{corollary}\label{cor5}
Let~$f:\Omega\times \R \to \R$ satisfy~\eqref{AR1}, \eqref{AR2}, \eqref{AR3} and~\eqref{AR4}. Let~$0\le s_1<s_2\le 1$ and~$1<p_2<p_1<N$.

Then, the problem
\begin{equation*}
\begin{cases}
(-\Delta)^{s_1}_{p_1}\, u +  (-\Delta)^{s_2}_{p_2} \, u = f(x, u) &{\mbox{ in }}\Omega,\\
u=0&{\mbox{ in }}\R^N\setminus\Omega
\end{cases}
\end{equation*}
admits a nontrivial weak solution of Mountain Pass type.
\end{corollary}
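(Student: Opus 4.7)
The plan is to derive Corollary~\ref{cor5} as an immediate instance of Theorem~\ref{MPT} in the case~$m=2$, with the identification~$(s_1,p_1)$ and~$(s_2,p_2)$ for the two summands in the operator. Since no hypothesis beyond the structural assumptions on~$f$ is required in Corollary~\ref{cor5}, the proof reduces to verifying that we indeed fall inside the framework of problem~\eqref{senzamumeno} and that the parameters governing the subcritical growth condition~\eqref{AR1} are consistent with the numerology of Theorem~\ref{MPT}.

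First I would observe that the exponents of the corollary satisfy~$s_1,s_2\in[0,1]$ and~$p_1,p_2\in(1,N)$, as requested in the statement of Theorem~\ref{MPT}. Next I would compute the two quantities appearing in~\eqref{AR1}: since~$p_2<p_1$, definition~\eqref{defphatr843967} gives~$\widehat p=p_1$; moreover, $(p_\sharp,s_\sharp)$ is chosen among~$\{(p_1,s_1),(p_2,s_2)\}$ so as to maximize the critical exponent~$\frac{Np_k}{N-s_kp_k}$. I would then invoke the automatic estimate~$\widehat p<(p_\sharp)^{*}_{s_\sharp}$ highlighted in the paper, so that the exponent~$q$ entering~\eqref{AR1} can indeed be picked in the nonempty interval~$\bigl(\widehat p,(p_\sharp)^{*}_{s_\sharp}\bigr)$. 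With this in place, the hypotheses~\eqref{AR1}--\eqref{AR4} assumed in Corollary~\ref{cor5} match exactly those of Theorem~\ref{MPT}.

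At this stage Theorem~\ref{MPT} applies verbatim and produces a nontrivial weak solution of Mountain Pass type, which is precisely the conclusion of Corollary~\ref{cor5}. In truth, there is no genuine analytical obstacle in the proof of the corollary itself: all the work is concealed inside Theorem~\ref{MPT}, whose delicate points (Palais--Smale compactness, geometry of the functional, and compatibility of the product-measure structure~\eqref{ipotesimuprodotto} with the finite family~$\{(s_k,p_k)\}_{k=1,2}$) have already been handled at the abstract level. The only conceptual point worth underlining is that the configuration~$s_1<s_2$ together with~$p_2<p_1$, which is \emph{opposite} to the monotone case covered in~\cite{MR3910033}, is automatically encompassed by our formulation: the indexing of the pairs~$(s_k,p_k)$ plays no role, and the choice of~$(s_\sharp,p_\sharp)$ that optimizes the Sobolev embedding replaces any reliance on a single ``dominant'' operator.
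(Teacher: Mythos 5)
Your proof is correct and follows exactly the same route as the paper: set $m=2$, take $\mu=\delta_{(s_1,p_1)}+\delta_{(s_2,p_2)}$, identify $\widehat p=p_1$ and $(p_\sharp)^*_{s_\sharp}=\max\bigl\{\tfrac{Np_1}{N-s_1p_1},\tfrac{Np_2}{N-s_2p_2}\bigr\}$, and apply Theorem~\ref{MPT}.
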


The broad generality of our setting allows to consider problem~\eqref{senzamumeno} when the points~$(s_k,p_k)$ all share the same critical exponent, namely the condition
\begin{equation}\label{criticocostante}
\frac{N p_k}{N-s_k p_k} \ \mbox{ is constant for any } k\in\{1,\dots, m\}
\end{equation}
holds. This interesting situation is illustrated in the forthcoming Figure~\ref{punti}.

\begin{corollary}\label{cor6}
Let~$f:\Omega\times \R \to \R$ satisfy~\eqref{AR1}, \eqref{AR2},  \eqref{AR3} and~\eqref{AR4}.  Let~$(s_k, p_k)$ satisfy condition~\eqref{criticocostante}.

Then,  problem~\eqref{senzamumeno} admits a weak solution of Mountain Pass type.
\end{corollary}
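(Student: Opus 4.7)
The plan is to recognize that Corollary~\ref{cor6} is an immediate specialization of Theorem~\ref{MPT} to the geometric configuration singled out by~\eqref{criticocostante}, so the proof reduces to verifying that the structural assumptions of the theorem remain nondegenerate in this symmetric setting and then invoking the theorem.

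Let me denote by $p^{\ast}$ the common value of $\tfrac{N p_k}{N - s_k p_k}$ prescribed by~\eqref{criticocostante}. Under this hypothesis, any choice of $(s_\sharp, p_\sharp) \in \{(s_k, p_k) : k = 1, \dots, m\}$ produces the same critical exponent $(p_\sharp)^{*}_{s_\sharp} = p^{\ast}$, so the upper endpoint of the admissible range for $q$ in~\eqref{AR1} is unambiguously determined. Next I would check that $\widehat{p} < p^{\ast}$ strictly, which is what guarantees that the subcritical interval $(\widehat{p}, p^{\ast})$ is nonempty: letting $j$ realize the maximum $p_j = \widehat{p}$, one has
\[
\widehat{p} = p_j < \frac{N p_j}{N - s_j p_j} = p^{\ast}
\]
provided $s_j > 0$, which is forced by the very requirement that the interval in~\eqref{AR1} be nontrivial (the alternative $s_j = 0$ would give $p^{\ast} = \widehat{p}$, making~\eqref{AR1} unsatisfiable). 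The behaviour at the origin~\eqref{AR2}, the Ambrosetti--Rabinowitz assumption~\eqref{AR3} and the superlinearity~\eqref{AR4} all depend only on $\widehat{p}$, hence remain meaningful.

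Having verified that every hypothesis of Theorem~\ref{MPT} is in force, I would invoke the theorem verbatim to produce the desired weak solution of~\eqref{senzamumeno} of Mountain Pass type. The conceptual concern one might have is that, when all the pairs $(s_k, p_k)$ lie on the same curve $\{(s, p) : \tfrac{Np}{N - sp} = p^{\ast}\}$, no single term in the superposition operator dominates the critical embedding, which could a priori obstruct the compactness arguments underlying the Mountain Pass scheme. The main observation that makes the proof work is that this fear is unfounded: the strict gap $\widehat{p} < p^{\ast}$ keeps the subcritical compact embeddings available for every $q$ in the required range, so the variational machinery developed in the proof of Theorem~\ref{MPT} functions just as well in this symmetric configuration. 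The role of Corollary~\ref{cor6} is therefore to emphasize the robustness of the framework rather than to require any additional analytic argument.
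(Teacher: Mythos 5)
Your proposal is correct and takes essentially the same route as the paper: both simply observe that Corollary~\ref{cor6} is a direct specialization of Theorem~\ref{MPT} to a measure $\mu=\mu^+=\sum_{k=1}^m\delta_{(s_k,p_k)}$ whose atoms all share the same critical exponent, so the conclusion follows by invoking the theorem. Your added remark that the case $s_j=0$ at the maximizing index would collapse the interval $(\widehat p, p^\ast)$ and render~\eqref{AR1} unsatisfiable (making the statement vacuous in that degenerate configuration) is a sound observation that the paper leaves implicit, but it is not needed for the proof itself.
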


\subsection{Further comments and remarks}
The study of problems~\eqref{P1} and~\eqref{problemaW} goes through some preliminary steps.  It is worthwhile to mention that, in this general setting, the choice of a proper functional space for the variational argument to take place is nontrivial, due to the presence of the measure~$\mu$.
Indeed,  we show that some preliminary properties of the functional space (namely the completeness and some embedding results) can be stated even under the ``lighter" assumption that
\begin{equation}\label{ipotesimu1}
\mbox{there exists~$(\overline{s}, \overline{p})\in (0,1]\times (1,N)$ such that } \mu^+\Big([\overline{s},1]\times[\overline{p},N)\Big)>0.
\end{equation}

\begin{figure}[h]
\begin{center}
\includegraphics[scale=.45]{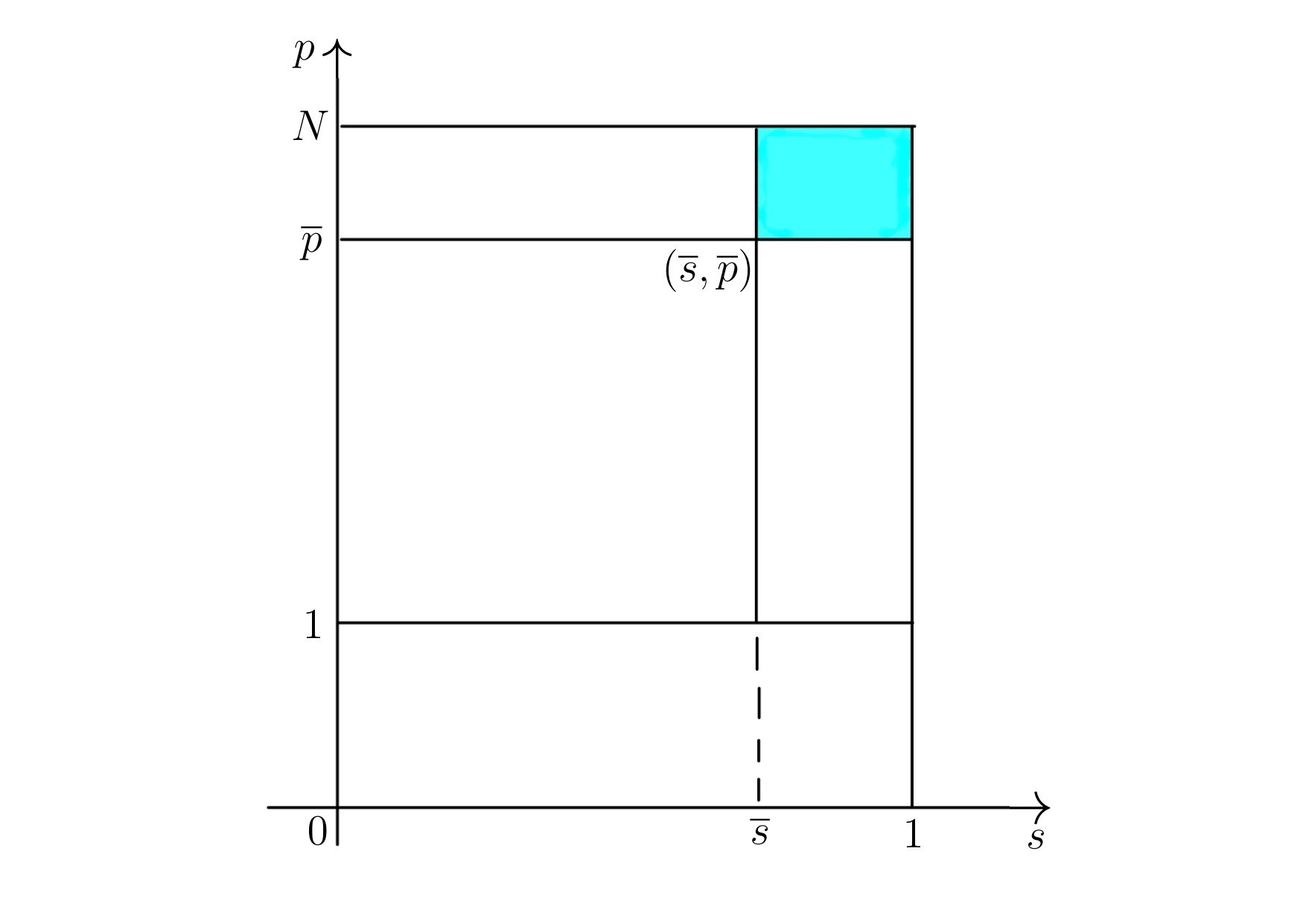}
\end{center}
\caption{The set~$[\overline{s},1]\times[\overline{p},N)$ (in blue) introduced in~\eqref{ipotesimu1}.}
\label{blu}
\end{figure}

We point out that hypothesis~\eqref{ipotesimu1} is weaker than the condition stated in~\eqref{ipotesimu11}.
Indeed, if~\eqref{ipotesimu11} is in force, then there exists~$\overline{p}$ such that~$\mu^+([\overline{s},1]\times[\overline{p},N))>0$,
hence~\eqref{ipotesimu1} is satisfied as well.  One can also compare assumptions~\eqref{ipotesimu11} and~\eqref{ipotesimu1}
through Figures~\ref{green} and~\ref{blu}.

Also,
at this preliminary stage, no assumption on the negative part of the measure~$\mu$ is required, as the functional space is defined by means of~$\mu^+$, in a way which will be fully clarified in Section~\ref{sec2}.

Due to the wide generality of our setting, things get more complicated when addressing the issue of the functional space being uniformly convex or not. To not weigh down too much this introduction, we refer the interested reader to the discussion handled in Open Problem~\ref{OP1}
on page~\pageref{OP1}.
However, the difficulty related to the possible lack of uniform convexity (or reflexivity)
compel us to restrict ourselves, on some occasions, to the case of the positive part of the measure~$\mu^+$ being a convergent series of Dirac's measures.
We point out that our results remain new in the literature even under this more restrictive condition.
We believe that the case of a general measure only satisfying~\eqref{ipotesimu11} is an interesting open problem to be investigated in the future. 

To further develop our analysis, we need the positive and negative parts of the measure~$\mu$ to interact properly, namely the contribution coming from~$\mu^-$ has to be “reabsorbed” through quantitative estimates into~$\mu^+$. To this extent, we will use
the hypotheses~\eqref{ipotesimuprodotto}, \eqref{ipotesimus}
and~\eqref{ipotesimup}.

\subsection{Organization of the paper}
Our paper is organized as follows. The functional framework needed for this paper will be presented in Section~\ref{sec2}.
Section~\ref{sec3} will deal with the proof of the main Theorems~\ref{Weierstrass} and~\ref{MPT}.
Section~\ref{sec4} will contain some useful applications of our main results, while Appendix~\ref{appendice} contains an example 
of measure~$\mu$ for which the variational formulation of
problem~\eqref{P1} may be rather problematic. 

\section{Functional setting and preliminary results}\label{sec2}

In this section we introduce the variational framework needed to address problem~\eqref{P1}. 

\subsection{The functional setting}
We introduce the following notation:
\[
[u]_{s,p}:=
\begin{cases}
\|u\|_{L^p(\R^N)}  &\mbox{ if } s=0,
\\ \\
\displaystyle\left(c_{N,s,p}\iint_{\R^{2N}}\frac{|u(x)-u(y)|^p}{|x-y|^{N+sp}}\,dx\,dy \right)^{1/p} &\mbox{ if } s\in(0,1),
\\ \\
\|\nabla u\|_{L^p(\R^N)}  &\mbox{ if } s=1.
\end{cases}
\]
In light of this and the definition in~\eqref{cnsp}, we have that
\[
\lim_{s\searrow0}[u]_{s,p}=[u]_{0,p}\qquad{\mbox{and}}\qquad\lim_{s\nearrow1}[u]_{s,p}=[u]_{1,p}.
\]
Let~$\mu^+$ be a finite (Borel) measure satisfying~\eqref{ipotesimu1}. We set
\begin{equation}\label{definizionenorma}
\|u\|_\mu:=\iint_\Sigma [u]_{s,p}\,d\mu^+(s,p).
\end{equation}

The definition in~\eqref{definizionenorma} might appear ``unusual". Indeed, one could be tempted to consider instead the quantity
\[
\iint_\Sigma [u]^p_{s,p}\,d\mu^+(s,p),
\]
but the latter does not define a norm, due to lack of homogeneity.

We show instead that~\eqref{definizionenorma} does define a norm:

\begin{proposition}
The function~$\|\cdot\|_\mu$ in~\eqref{definizionenorma} defines a norm.
\end{proposition}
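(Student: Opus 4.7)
The plan is to verify the three norm axioms one at a time, using the fact that for each fixed $(s,p) \in \Sigma$ the quantity $[\,\cdot\,]_{s,p}$ is itself a seminorm on the relevant functional space (either an $L^p(\mathbb{R}^N)$ norm when $s=0$, a Gagliardo seminorm when $s \in (0,1)$, or a $W^{1,p}(\mathbb{R}^N)$ gradient seminorm when $s=1$). Since $\mu^+$ is a nonnegative finite Borel measure on $\Sigma$, integration against $\mu^+$ preserves seminorm properties pointwise.

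First, homogeneity follows immediately from the identity $[\lambda u]_{s,p} = |\lambda|\,[u]_{s,p}$, which is clear from each of the three cases in the definition of $[\,\cdot\,]_{s,p}$; integrating against $d\mu^+(s,p)$ yields $\|\lambda u\|_\mu = |\lambda|\,\|u\|_\mu$. Next, the triangle inequality
\[
[u+v]_{s,p} \leq [u]_{s,p} + [v]_{s,p}
\]
holds pointwise in $(s,p)$: for $s \in (0,1)$ it is Minkowski's inequality applied to the function $(x,y) \mapsto (u(x)-u(y))/|x-y|^{(N+sp)/p}$ viewed in $L^p(\mathbb{R}^{2N})$, and the cases $s=0$ and $s=1$ are standard $L^p$-Minkowski inequalities. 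Integrating against $\mu^+$ then gives $\|u+v\|_\mu \leq \|u\|_\mu + \|v\|_\mu$.

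The delicate point is positive definiteness. Clearly $\|u\|_\mu \geq 0$, and $\|0\|_\mu = 0$. For the converse, suppose $\|u\|_\mu = 0$. Since $[u]_{s,p} \geq 0$ and $\mu^+$ is nonnegative, this forces $[u]_{s,p} = 0$ for $\mu^+$-a.e.\ $(s,p)$. I would then invoke assumption~\eqref{ipotesimu1}: there exists $(\overline{s},\overline{p}) \in (0,1]\times (1,N)$ such that the set $[\overline{s},1]\times[\overline{p},N)$ has strictly positive $\mu^+$-measure. Consequently, there must exist at least one point $(s_0,p_0)$ in this set at which $[u]_{s_0,p_0} = 0$ with $s_0 > 0$ and $p_0 > 1$. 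In either of the remaining cases ($s_0 \in (0,1)$ giving the vanishing of the Gagliardo seminorm, or $s_0 = 1$ giving the vanishing of $\|\nabla u\|_{L^{p_0}(\mathbb{R}^N)}$), one concludes that $u$ is constant almost everywhere on $\mathbb{R}^N$.

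Finally, since the functional framework implicitly enforces the Dirichlet condition $u \equiv 0$ on $\mathbb{R}^N \setminus \Omega$ (as dictated by problem~\eqref{P1}), the only admissible constant is zero, so $u = 0$ almost everywhere, completing the proof. The main obstacle is really this last step: one has to ensure that the measure $\mu^+$ ``sees'' a region of $\Sigma$ where the seminorm is strong enough to detect non-constant functions, which is precisely the role played by hypothesis~\eqref{ipotesimu1} (the constraints $\overline{s}>0$ and $\overline{p}>1$ are both essential, as otherwise the inert contribution at $s=0$ alone could not distinguish $u$ from zero once combined with the Dirichlet convention).
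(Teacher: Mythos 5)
Your proof is correct and follows essentially the same route as the paper's: reduce to positive definiteness, extract from \eqref{ipotesimu1} a point $(s_0,p_0)$ with $s_0>0$, $p_0>1$ at which $[u]_{s_0,p_0}=0$, and use the Dirichlet convention to pass from constancy to vanishing. Two small differences from the paper's version, both favorable to you: first, the paper phrases the extraction step in terms of $\operatorname{supp}\mu^+$ (which, taken literally, would require some continuity of $(s,p)\mapsto[u]_{s,p}$), whereas your ``$[u]_{s,p}=0$ for $\mu^+$-a.e.\ $(s,p)$, and the set $[\overline s,1]\times[\overline p,N)$ has positive $\mu^+$-measure, hence contains such a point'' is the tighter measure-theoretic formulation; second, for $s_0=1$ the paper appeals to the Poincar\'e inequality to get $\|u\|_{L^{p_0}(\Omega)}=0$, while you argue directly that $\nabla u=0$ a.e.\ on the connected set $\R^N$ forces $u$ constant, then invoke the Dirichlet condition — both are fine. (The paper also lists the case $s_0=0$ for completeness even though \eqref{ipotesimu1} guarantees $s_0\ge\overline s>0$, which you correctly recognize as not needed.)
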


\begin{proof}
The triangle inequality and the absolute homogeneity follow from the fact
that these properties hold true for~$[\cdot]_{s,p}$.

We now check the positivity property. Clearly, we have that~$\|u\|_\mu\ge0$.

Moreover, let~$u\in\mathcal{X}_\mu (\Omega)$ such that~$\|u\|_\mu=0$, namely
\[
\iint_\Sigma [u]_{s,p}\,d\mu^+(s,p)=0.
\]
We show that this entails that~$u\equiv 0$ in~$\R^N$.
Notice that, in virtue of~\eqref{ipotesimu1}, at least one of the following cases occurs:
\begin{itemize}
\item[i)] if~$(0, p)\in\mbox{supp}(\mu^+)$ for some~$p\in (1, N)$, then
\[
\|u\|_{L^p(\Omega)} =0;
\]
\item[ii)] if~$(s, p)\in \mbox{supp}(\mu^+)$
for some~$s\in(0,1)$ and~$p\in (1, N)$, then
\[
\iint_{\R^{2N}}\frac{|u(x)-u(y)|^p}{|x-y|^{N+sp}}\,dx\,dy=0;
\]
\item[iii)] if~$(1, p)\in\mbox{supp}(\mu^+)$ for some~$p\in (1, N)$, then
\[
\|\nabla u\|_{L^p(\Omega)} =0.
\]
\end{itemize}
In case~$i)$, we have that~$u\equiv0$ in~$\Omega$, and therefore~$u\equiv0$ in~$\R^N$.

In case~$ii)$, we have that~$u$ is constant a.e. in~$\R^N$. Since~$u\equiv0$ in~$\R^N\setminus\Omega$,
we have that~$u\equiv0$ a.e. in~$\R^N$. 

In case~$iii)$, we use the Poincar\'e inequality to see that
$$ \|u\|_{L^p(\Omega)}\le C
\|\nabla u\|_{L^p(\Omega)}=0,$$
for some~$C>0$. This implies that~$u\equiv0$ a.e. in~$\Omega$, and therefore in~$\R^N$, as desired.
\end{proof}

We now define~$\mathcal{X}_\mu (\Omega)$ as the completion
of~$C^\infty_0(\Omega)$ with respect to the norm in~\eqref{definizionenorma}.

We stress that we have only assumed~\eqref{ipotesimu1} so far.
This condition alone will allow us to establish some embedding results for
the space~$\mathcal{X}_\mu (\Omega)$
(see the forthcoming Proposition~\ref{propembedding}).

\subsection{Some embedding results}
It will be useful in the forthcoming results to observe that, by~\eqref{ipotesimu1}, 
\begin{equation}\label{ipotesisharp}
\mbox{there exists~$(s_\sharp, p_\sharp)\in[\overline{s},1]\times [\overline{p},N)$ such that~$\mu^+([s_\sharp,1]\times[p_\sharp,N))>0$.}
\end{equation}

We emphasize that one can have more than one couple~$(s_\sharp, p_\sharp)$ which satisfies~\eqref{ipotesisharp}. If this is the case,  then one can pick~$s_\sharp$ and~$p_\sharp$ in order to have 
\begin{equation}\label{crit}
(p_\sharp)^{*}_{s_\sharp} := \frac{N p_\sharp}{N-s_\sharp p_\sharp}
\end{equation}
as large as possible, but still fulfilling condition~\eqref{ipotesisharp}.

It is worth noting that, in our setting, one may have different couples~$(s_\sharp, p_\sharp)$ which realize~\eqref{crit}. More precisely, one can consider the function
\[
\Sigma \ni(s,p)\mapsto \frac{Np}{N-sp}
\]
and observe that, by construction, the level sets of this function are made of points with the same critical exponent. It is not hard to show that such level sets are the intersection between~$\Sigma$ and hyperbolas of the form
\[
p(s)=\frac{CN}{N+Cs}.
\]
Here, the constant~$C$ is the critical exponent of the points belonging to the level set, namely
\[
C=\frac{Np(s)}{N-sp(s)}.
\]
This is illustrated in Figure~\ref{punti}.

\begin{figure}[h]
\begin{center}
\includegraphics[scale=.40]{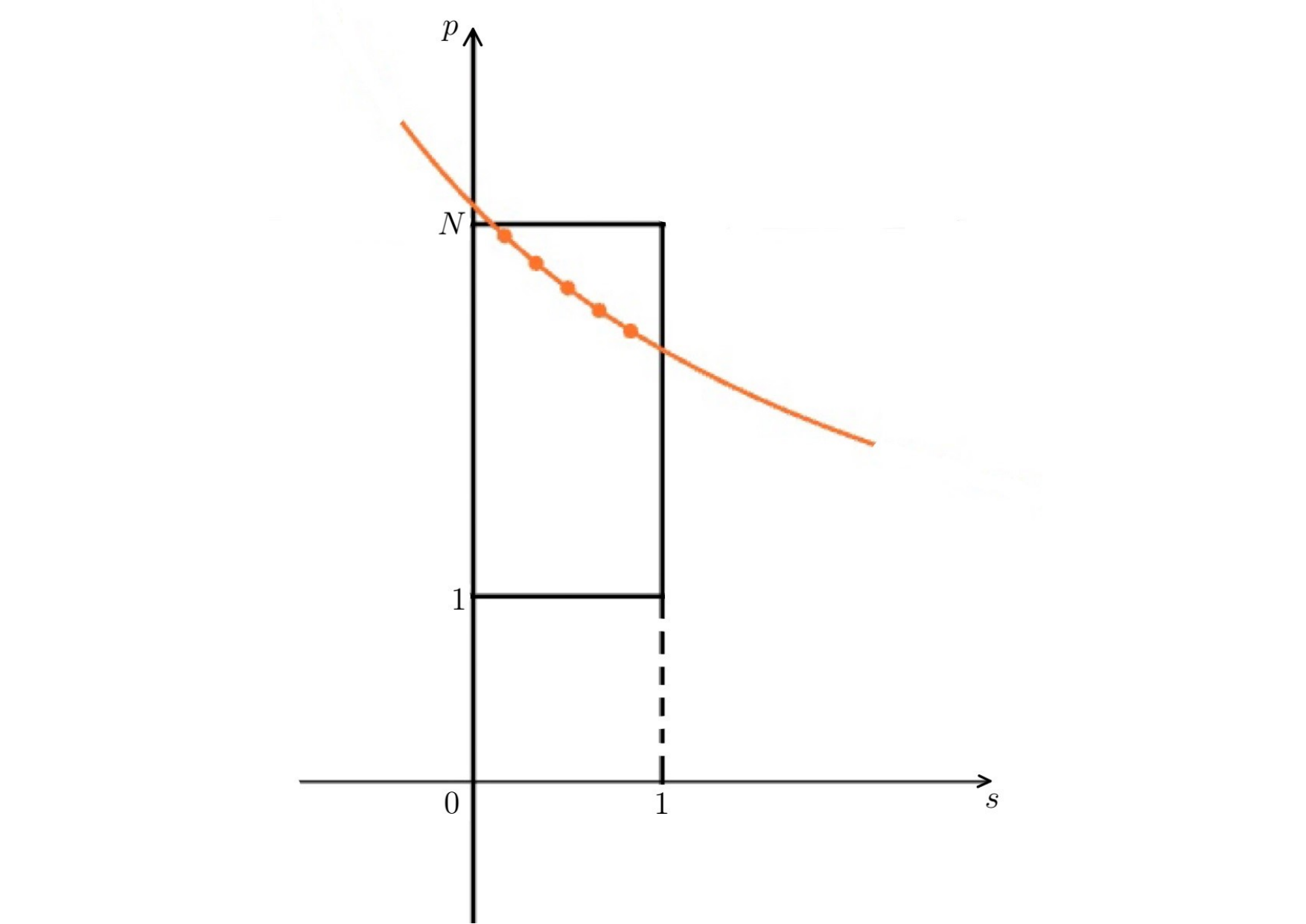}
\end{center}
\caption{The points~$(s_\sharp, p_\sharp)$ sharing the same critical exponent~$(p_\sharp)^{*}_{s_\sharp}$.}
\label{punti}
\end{figure}

We now aim to prove that~$\mathcal{X}_\mu (\Omega)$ is embedded in suitable Sobolev spaces. To this end, we first recall the following result established in~\cite[Theorem~3.2]{DPSV2}.

\begin{theorem}\label{sobolevmio}
Let~$\Omega$ be a bounded, open subset of~$\R^N$ and let~$p\in (1,N)$.

Then, there exists~$C=C(N,\Omega,p)>0$ such that, for every~$s$, $S\in[0,1]$ with~$s\le S$ and every measurable function~$u:\R^N\to\R$ with~$u=0$ a.e. in~$\R^N\setminus\Omega$,
\[
[u]_{s,p}\le C\,[u]_{S,p}.
\]
\end{theorem}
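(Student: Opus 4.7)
The plan is to split the Gagliardo double integral defining $[u]_{s,p}^p$ into a near-diagonal region $\{|x-y|\le R\}$ and a far-away region $\{|x-y|>R\}$, where $R:=\mathrm{diam}(\Omega)$, and to bound each piece separately. The endpoint cases $s\in\{0,1\}$ or $S\in\{0,1\}$ are handled using the explicit forms $[u]_{0,p}=\|u\|_{L^p(\R^N)}$ and $[u]_{1,p}=\|\nabla u\|_{L^p(\R^N)}$, together with the classical Poincar\'e inequality and the continuity of $s\mapsto [u]_{s,p}$ on $[0,1]$ ensured by the normalization in~\eqref{cnsp}; hence the nontrivial range is $0<s<S<1$.

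On the near-diagonal region, since $s\le S$ and $|x-y|\le R$, one has $|x-y|^{Sp-sp}\le R^{(S-s)p}\le\max(1,R^p)$, so that
\[
\iint_{\{|x-y|\le R\}}\frac{|u(x)-u(y)|^p}{|x-y|^{N+sp}}\,dx\,dy\le \max(1,R^p)\,\frac{[u]_{S,p}^p}{c_{N,S,p}}.
\]
On the far-away region, since $\mathrm{diam}(\Omega)=R$, the condition $|x-y|>R$ forces at most one of $x,y$ to lie in $\Omega$; using symmetry and the explicit radial integration of the kernel,
\[
\iint_{\{|x-y|>R\}}\frac{|u(x)-u(y)|^p}{|x-y|^{N+sp}}\,dx\,dy\le \frac{2\,\omega_{N-1}}{sp\,R^{sp}}\,\|u\|_{L^p(\Omega)}^p.
\]
Multiplying both estimates by $c_{N,s,p}$ and summing yields the master inequality
\[
[u]_{s,p}^p\le C_1(N,p,\Omega)\,\frac{c_{N,s,p}}{c_{N,S,p}}\,[u]_{S,p}^p+C_2(N,p,\Omega)\,\frac{c_{N,s,p}}{s}\,\|u\|_{L^p(\Omega)}^p.
\]

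To close the argument, I would exploit the explicit formula~\eqref{cnsp}: a direct calculation shows that $s\mapsto c_{N,s,p}/s$ extends to a bounded continuous function on $[0,1]$, and I would show that the product $\frac{c_{N,s,p}}{c_{N,S,p}}[u]_{S,p}^p$ remains controlled even as $S\nearrow 1$ (where $c_{N,S,p}\to 0$) by means of the Bourgain--Brezis--Mironescu asymptotics, which force $[u]_{S,p}\to\|\nabla u\|_{L^p}$ at the correct rate to compensate. Finally, I would invoke a uniform Poincar\'e-type inequality of the form $\|u\|_{L^p(\Omega)}\le C(N,p,\Omega)\,[u]_{S,p}$ valid for every $S\in[0,1]$ with $C$ independent of $S$ (itself proved by the same splitting combined with a continuity/compactness argument) in order to reabsorb the $L^p$ contribution. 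The main obstacle is precisely establishing the uniformity of the constants across all pairs $(s,S)\in[0,1]^2$ with $s\le S$, and most of all near the degenerate endpoint $S\to 1$; this is where a careful quantitative analysis of~\eqref{cnsp} and of the BBM limit, rather than a purely algebraic manipulation of the ratios, becomes indispensable.
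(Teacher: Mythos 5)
The paper does not contain a proof of this statement: it is quoted verbatim as \cite[Theorem~3.2]{DPSV2} and used as a black box, so there is no internal argument to compare against. Evaluating your proposal on its own merits, however, reveals a genuine gap in the crucial near-diagonal step.

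Your near-diagonal estimate yields, after multiplying through by $c_{N,s,p}$, the bound
\[
c_{N,s,p}\iint_{\{|x-y|\le R\}}\frac{|u(x)-u(y)|^p}{|x-y|^{N+sp}}\,dx\,dy
\;\le\;
\max(1,R^p)\,\frac{c_{N,s,p}}{c_{N,S,p}}\,[u]_{S,p}^p,
\]
and the ratio $c_{N,s,p}/c_{N,S,p}$ is \emph{not} bounded uniformly over pairs $s\le S$: from~\eqref{cnsp} one has $c_{N,\sigma,p}\asymp \sigma(1-\sigma)$, so for fixed $s<1$ and $S\nearrow1$ the ratio behaves like $(1-s)/(1-S)\to+\infty$. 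Your proposed fix — that the Bourgain--Brezis--Mironescu asymptotics make $[u]_{S,p}$ decay at a compensating rate — is incorrect: BBM (together with the normalisation~\eqref{cnsp}) gives precisely $[u]_{S,p}\to\|\nabla u\|_{L^p(\R^N)}$, i.e.\ $[u]_{S,p}^p$ stays \emph{bounded away from zero} for any nonconstant $u\in C^\infty_0(\Omega)$. Hence $\frac{c_{N,s,p}}{c_{N,S,p}}[u]_{S,p}^p\to+\infty$ for such $u$, and the master inequality you derive does not give a constant depending only on $(N,\Omega,p)$. The defect is structural: passing from the truncated integral against $|x-y|^{-(N+Sp)}$ to the full $[u]_{S,p}^p$ by dividing out $c_{N,S,p}$ throws away a factor of order $(1-S)$ exactly when it is most needed, and no a posteriori manipulation of the normalisation constants can retrieve it. A correct treatment of the near-diagonal part has to avoid this wasteful step entirely (e.g.\ by a more careful scale decomposition, an interpolation of Gagliardo--Nirenberg type with explicit tracking of the $(s,S)$-dependence, or by treating the regime $S$ near $1$ through a genuine $W^{1,p}$ control), rather than by invoking BBM after the fact. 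The far-region estimate and the reduction of the $L^p$ term via a $S$-uniform Poincar\'e inequality are, by contrast, plausible and in the right spirit.
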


We are now in the position to provide the desired embedding result.

\begin{proposition}\label{propembedding}
Let~$s_\sharp$ and~$p_\sharp$ be given by~\eqref{ipotesisharp}. Then,
\begin{itemize}
\item if~$s_\sharp=1$, then there exists~$C=C(p_\sharp, N, \Omega,\mu^+)>0$
such that, for any~$u\in \mathcal{X}_\mu (\Omega)$,
\[
\| u\|_{W_0^{1,p_\sharp}(\Omega)}\le C\, \|u\|_\mu,
\]
namely the space~$\mathcal{X}_\mu (\Omega)$ is continuously embedded in~$W_0^{1,p_\sharp}(\Omega)$;
\item if~$s_\sharp\in [\overline s,1)$, then there exists~$C=C(s_\sharp, p_\sharp, N, \Omega,\mu^+)>0$ such that, for any~$u\in \mathcal{X}_\mu (\Omega)$,
\[
\|u\|_{L^{(p_\sharp)^{*}_{s_\sharp}}(\Omega)}\le C\, \|u\|_\mu,
\]
namely the space~$\mathcal{X}_\mu (\Omega)$ is continuously embedded in~$L^{(p_\sharp)^{*}_{s_\sharp}}(\Omega)$.

In addition, if there exists~$(s_1, p_1)\in (0,1)\times (1,N)$ such that~$\mu^+([s_1,1)\times\{p_1\})>0$, then there exists~$C=C(s_1, p_1, N, \Omega,\mu^+)>0$ such that, for any~$u\in \mathcal{X}_\mu (\Omega)$,
\[
\|u\|_{W^{s_1,p_1}_0(\Omega)}\le C\, \|u\|_\mu,
\]
namely the space~$\mathcal{X}_\mu (\Omega)$ is continuously embedded in~$W_0^{s_1,p_1}(\Omega)$.
\end{itemize}
\end{proposition}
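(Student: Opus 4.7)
The plan is to prove each of the three items via the same two-step scheme. First, I establish a pointwise-in-$(s,p)$ inequality that bounds the target norm by $C(s,p,N,\Omega)\,[u]_{s,p}$ (initially for $u\in C_0^\infty(\Omega)$), valid on a subset $A\subset\Sigma$ with $\mu^+(A)>0$ and with the constant uniformly bounded on $A$. Second, integrating this inequality against $d\mu^+(s,p)$ on $A$ and dividing by $\mu^+(A)$ converts the right-hand side into a multiple of $\|u\|_\mu$. The estimate then extends from $C_0^\infty(\Omega)$ to all of $\mathcal{X}_\mu(\Omega)$ by the very definition of $\mathcal{X}_\mu(\Omega)$ as the $\|\cdot\|_\mu$-completion.

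For the first item ($s_\sharp=1$), by inner regularity of the finite Borel measure $\mu^+$ I pick $p''\in(p_\sharp,N)$ with $\mu^+(\{1\}\times[p_\sharp,p''])>0$. H\"older's inequality on the bounded set $\Omega$ yields, uniformly for $p\in[p_\sharp,p'']$,
\[
[u]_{1,p_\sharp}=\|\nabla u\|_{L^{p_\sharp}(\Omega)}\le|\Omega|^{1/p_\sharp-1/p}\,[u]_{1,p}.
\]
Integrating against $d\mu^+(1,p)$ on $\{1\}\times[p_\sharp,p'']$ then gives $[u]_{1,p_\sharp}\le C\|u\|_\mu$, and the classical Poincar\'e inequality upgrades this bound to the full $W_0^{1,p_\sharp}(\Omega)$-norm.

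For the second item ($s_\sharp\in[\overline s,1)$), combining the classical or fractional Sobolev embedding at $(s,p)$ with the monotonicity $p^*_s\ge(p_\sharp)^*_{s_\sharp}$ on $[s_\sharp,1]\times[p_\sharp,N)$ (checked directly from $p^*_s=Np/(N-sp)$ and $s_\sharp\le 1$, $p\ge p_\sharp$) provides the pointwise inequality
\[
\|u\|_{L^{(p_\sharp)^*_{s_\sharp}}(\Omega)}\le C\,\|u\|_{L^{p^*_s}(\Omega)}\le C(s,p,N,\Omega)\,[u]_{s,p},
\]
the first step being the trivial inclusion $L^{p^*_s}(\Omega)\hookrightarrow L^{(p_\sharp)^*_{s_\sharp}}(\Omega)$ on the bounded set $\Omega$. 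The hard part is that $C(s,p,N,\Omega)$ is \emph{not} a priori uniform on $[s_\sharp,1]\times[p_\sharp,N)$: the fractional Sobolev constant and the normalizing factor $c_{N,s,p}$ can degenerate as $s\nearrow 1$, while the $L^{p^*_s}$-embedding constant blows up as $sp\nearrow N$. I circumvent this via inner regularity: fix a compact set $K\subset[s_\sharp,1]\times[p_\sharp,N-\delta]$ with $\mu^+(K)>0$ for some $\delta>0$, so that $sp\le N-\delta$ on $K$. Splitting $K$ according to whether $\mu^+$ charges $K\cap\{s=1\}$ or $K\cap\{s<1\}$, in the former subcase the classical Sobolev constant is continuous on the compact set $\{1\}\times[p_\sharp,N-\delta]$; in the latter, further shrinking to $K\cap\{s\le 1-1/n\}$ (still of positive $\mu^+$-measure for $n$ large) places both $c_{N,s,p}$ and the fractional Sobolev constant in the continuous regime on a compact subset of $(0,1)\times(1,N)$, hence uniformly bounded. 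Integrating the pointwise inequality against $\mu^+$ on this compact subset closes the argument.

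The third item benefits from the extra hypothesis that the relevant $\mu^+$-mass lies on the vertical segment $[s_1,1)\times\{p_1\}$, so Theorem~\ref{sobolevmio} applies directly with fixed second exponent $p_1$: for every $S\in[s_1,1)$ it gives $[u]_{s_1,p_1}\le C(N,\Omega,p_1)\,[u]_{S,p_1}$ with a constant uniform in $S$. Integrating against $d\mu^+(S,p_1)$ on $[s_1,1)\times\{p_1\}$ and dividing by $\mu^+([s_1,1)\times\{p_1\})$ yields $[u]_{s_1,p_1}\le C\|u\|_\mu$; a standard fractional Poincar\'e inequality for functions vanishing outside the bounded set $\Omega$ then supplies the $L^{p_1}$-control needed to assemble the full $W_0^{s_1,p_1}$-norm.
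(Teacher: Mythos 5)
Your proof is correct and follows essentially the same strategy as the paper's: in each case one establishes a pointwise Sobolev- or H\"older-type inequality on a subset of $\Sigma$ of positive $\mu^+$-mass with constants uniformly bounded there, integrates against $\mu^+$, and uses the completion to pass from $C_0^\infty(\Omega)$ to all of $\mathcal X_\mu(\Omega)$. The only notable difference is that in the second item you are more explicit than the paper about why the relevant $\mu^+$-mass can be confined to a compact set on which all constants stay bounded (splitting the $\{s=1\}$ and $\{s<1\}$ contributions and shrinking away from the critical line $sp=N$), whereas the paper simply asserts the existence of $\delta,\eta>0$ with $\mu^+([s_\sharp,1-\delta]\times[p_\sharp,N-\eta])>0$, which is justified only after recalling that $s_\sharp$ is chosen to maximize the critical exponent; your version is more self-contained on this point but amounts to the same argument.
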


\begin{proof}
We assume first that~$s_\sharp=1$. By~\eqref{ipotesisharp} we see that 
\[
\mbox{there exists~$\eta>0$ such that } \mu^+(\{1\}\times [p_\sharp,N-\eta])>0.
\]
Then, from this and~\eqref{definizionenorma} we deduce that
\[
\begin{aligned}
\|u\|_\mu&=\iint_\Sigma [u]_{s,p}\,d\mu^+(s,p) \\
&\ge \iint_{\{1\}\times [p_\sharp,N-\eta]} \|\nabla u\|_{L^p(\Omega)}\,d\mu^+(s,p) \\
&\ge \iint_{\{1\}\times [p_\sharp,N-\eta]} |\Omega|^{\frac{1}{p}-\frac{1}{p_\sharp}} \|\nabla u\|_{L^{p_\sharp}(\Omega)}\,d\mu^+(s,p) \\
&= \iint_{\{1\}\times [p_\sharp,N-\eta]} |\Omega|^{\frac{1}{p}-\frac{1}{p_\sharp}} \| u\|_{W_0^{1,p_\sharp}(\Omega)}\,d\mu^+(s,p).
\end{aligned}
\]
We observe that~$|\Omega|^{\frac{1}{p}-\frac{1}{p_\sharp}}$ is bounded for~$p\in [p_\sharp,N-\eta]$,
namely
\[
0<\min_{p\in [p_\sharp,N-\eta]}\, |\Omega|^{\frac{1}{p}-\frac{1}{p_\sharp}}\le \max_{p\in [p_\sharp,N-\eta]}\, |\Omega|^{\frac{1}{p}-\frac{1}{p_\sharp}}.
\]
Hence,
\[
\|u\|_\mu\ge\min_{p\in [p_\sharp,N-\eta]}|\Omega|^{\frac{1}{p}-\frac{1}{p_\sharp}}\, \mu^+ \Big(\{1\}\times [p_\sharp,N-\eta]\Big) \| u\|_{W_0^{1,p_\sharp}(\Omega)},
\]
which provides the desired result when~$s_\sharp=1$.

Let us now focus on the case~$s_\sharp\in [\overline s,1)$.
By~\eqref{ipotesisharp}, there exist~$\delta$, $\eta >0$ such that
\[
\mu^+([s_\sharp, 1-\delta]\times [p_\sharp,N-\eta])>0.
\]
Then, since~$p^*_s\geq (p_\sharp)^{*}_{s_\sharp}$ for any~$s\in [s_\sharp, 1-\delta]$ and~$p\in [p_\sharp,N-\eta]$, by the the H\"older inequality and~\cite[Theorem~6.5]{MR2944369} we gather that
\[
\begin{aligned}
\|u\|_\mu&=\iint_\Sigma [u]_{s,p}\,d\mu^+(s,p) \\
&\geq \iint_{[s_\sharp, 1-\delta]\times [p_\sharp,N-\eta]}
 [u]_{s,p}\,d\mu^+(s,p) \\
&\geq \iint_{[s_\sharp, 1-\delta]\times [p_\sharp,N-\eta]} C_1
\|u\|_{L^{p^*_s}(\Omega)}\,d\mu^+(s,p) \\
&\geq \iint_{[s_\sharp, 1-\delta]\times [p_\sharp,N-\eta]} C_2
\|u\|_{L^{(p_\sharp)^{*}_{s_\sharp}}(\Omega)}\,d\mu^+(s,p),
\end{aligned}
\]
being~$C_1=C_1(N,s,p)$ given by~\cite[Theorem~6.5]{MR2944369} and
\[
C_2=C_1(s,p, N, \Omega)\, |\Omega|^{\frac{1}{p^*_s}-\frac{1}{(p_\sharp)^*_{s_\sharp}}}.
\]
We note that the constant~$C_2$ is bounded uniformly for 
any~$s\in [s_\sharp, 1-\delta]$ and~$p\in [p_\sharp,N-\eta]$, namely
\[
0<\min_{(s,p)\in[s_\sharp, 1-\delta]\times [p_\sharp,N-\eta]}
C_2(s,p, N, \Omega)
\le \max_{(s,p)\in[s_\sharp, 1-\delta]\times [p_\sharp,N-\eta]}
C_2(s,p, N, \Omega)<+\infty.
\]
Thus,
\[
\|u\|_\mu \ge\min_{(s,p)\in[s_\sharp, 1-\delta]\times [p_\sharp,N-\eta]} 
C_2(s,p, N, \Omega) \, \mu^+ \Big([s_\sharp, 1-\delta]\times [p_\sharp,N-\eta]\Big) \|u\|_{L^{(p_\sharp)^{*}_{s_\sharp}}(\Omega)},
\]
which provides the desired continuous embedding.

Now, suppose that there exists~$(s_1, p_1)\in (0,1)\times (1,N) $ such that~$\mu^+([s_1,1)\times\{p_1\})>0$.
In this case, 
\[
\mbox{there exists~$\delta>0$ such that } \mu^+([s_1,1-\delta]\times\{p_1\})>0.
\]
Consequently, from Theorem~\ref{sobolevmio}, we have that
\[
\begin{aligned}
\|u\|_\mu&=\iint_\Sigma [u]_{s,p}\,d\mu^+(s,p) \\
&\geq \iint_{[s_1,1-\delta]\times\{p_1\}} [u]_{s,p_1}\,d\mu^+(s,p) \\
&\geq \iint_{[s_1,1-\delta]\times\{p_1\}}C_1 [u]_{s_1,p_1}\,d\mu^+(s,p)
\end{aligned}
\]
for some~$C_1 = C_1(N, \Omega, p_1)>0$.

Thus,
\[
\begin{split}
\|u\|_\mu &\ge 
C_1 \mu^+ \Big([s_1,1-\delta]\times\{p_1\}\Big) [u]_{s_1,p_1}\\
&\ge C_1 \mu^+ \Big([s_1,1-\delta]\times\{p_1\}\Big) \|u\|_{W^{s_1,p_1}_0(\Omega)},
\end{split}
\]
which proves the desired embedding and concludes the proof
of Proposition~\ref{propembedding}.
\end{proof}

\begin{corollary}\label{corembedding}
Let~$s_\sharp$ and~$p_\sharp$ be as in~\eqref{ipotesisharp}. Then,
\begin{itemize}
\item
if~$s_\sharp=1$, then the space~$\mathcal{X}_\mu (\Omega)$ is
compactly embedded in~$L^q(\Omega)$ for any~$q\in [1,{p^*_\sharp})$;
\item
if~$s_\sharp\in [\overline s,1)$ and there exists~$(s_1, p_1)\in (0,1)\times (1,N)$ such that~$\mu^+([s_1,1)\times\{p_1\})>0$, then the space~$\mathcal{X}_\mu (\Omega)$ is compactly embedded in~$L^q(\Omega)$ for any~$q\in [1,(p_1)^*_{s_1})$.
\end{itemize}
\end{corollary}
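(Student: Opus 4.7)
The plan is to deduce both compact embeddings by composing the continuous embeddings already produced in Proposition~\ref{propembedding} with the classical, respectively fractional, Rellich--Kondrachov compactness theorems. Since the composition of a bounded linear map with a compact linear map is compact, the argument reduces to selecting the right continuous embedding for each case and citing a standard compactness result.

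In the case $s_\sharp = 1$, I would invoke the first bullet of Proposition~\ref{propembedding} to get a continuous embedding $\mathcal{X}_\mu(\Omega) \hookrightarrow W_0^{1,p_\sharp}(\Omega)$. The classical Rellich--Kondrachov theorem on the bounded, smooth domain $\Omega$ then yields a compact embedding $W_0^{1,p_\sharp}(\Omega) \hookrightarrow L^q(\Omega)$ for every $q \in [1, p^*_\sharp)$. Composing the two produces the desired compact embedding.

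For the case $s_\sharp \in [\overline{s},1)$ with the extra assumption $\mu^+([s_1,1) \times \{p_1\}) > 0$, I would apply the third bullet of Proposition~\ref{propembedding}, which gives a continuous embedding $\mathcal{X}_\mu(\Omega) \hookrightarrow W_0^{s_1,p_1}(\Omega)$. Since $(s_1,p_1) \in (0,1) \times (1,N)$, the fractional Rellich--Kondrachov theorem (see \cite[Theorem~7.1]{MR2944369}) yields a compact embedding $W_0^{s_1,p_1}(\Omega) \hookrightarrow L^q(\Omega)$ for any $q \in [1,(p_1)^*_{s_1})$, and composition once more concludes the argument.

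I do not anticipate any serious obstacle: the corollary is essentially a bookkeeping consequence of Proposition~\ref{propembedding} together with well-known compactness results. The only subtlety is recognising \emph{why} the additional hypothesis $\mu^+([s_1,1)\times\{p_1\})>0$ is imposed in the fractional case — namely, that one needs a fractional exponent $s_1 > 0$ (rather than merely $s_\sharp \in [\overline s,1)$, which might a priori allow $s_\sharp = 0$ contributions only) in order to apply the compactness theorem for $W_0^{s_1,p_1}(\Omega)$, and to identify the target Lebesgue exponent $(p_1)^*_{s_1}$.
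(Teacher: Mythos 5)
Your argument matches the paper's proof exactly: in both cases the compact embedding is obtained by composing the continuous embedding from Proposition~\ref{propembedding} with the classical, respectively fractional, Rellich--Kondrachov theorem (the paper cites \cite[Corollary~7.2]{MR2944369} for the latter, a cosmetic difference from your citation of Theorem~7.1 there). No gaps; this is the intended route.
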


\begin{proof}
The desired result follows from Proposition~\ref{propembedding}
and the compact embeddings of~$W_0^{1,p_\sharp}(\Omega)$ and~$W_0^{s_1,p_1}(\Omega)$, respectively (see~\cite[Corollary~7.2]{MR2944369}).
\end{proof}

\begin{remark}\label{utile}
We observe that, if~$\mu^+$ reduces to the finite sum of Dirac measures centered at the points~$(s_k,p_k)$, then the assumption that~$\mu^+([s_1,1)\times\{p_1\})>0$ stated in Proposition~\ref{propembedding} and Corollary~\ref{corembedding} is always satisfied.
In particular, in this case the compact embedding is satisfied for any~$q\in [1,(p_\sharp)^*_{s_\sharp})$.
\end{remark}

\subsection{Uniform convexity of the functional space} A natural question in our setting is the following:

\begin{openproblem}\label{OP1} Is the space~$\mathcal X_\mu(\Omega)$ uniformly convex? If not, is it reflexive?
\end{openproblem}

As a partial positive result, here  we reduce ourselves to the case in which~$\mu^+$ is a convergent series of Dirac's delta, namely we assume that there exist a sequence~$(s_k,p_k)\in\Sigma$ and~$c_k\geq  0$ such that
\begin{equation}\label{mu=sommadelta}
\mu^+=\sum_{k=0}^{+\infty}c_k\delta_{(s_k,p_k)}, \quad \mbox{ with } \sum_{k=0}^{+\infty}c_k\in (0,+\infty).
\end{equation}

Under this hypothesis, the following result holds true.

\begin{proposition}\label{uniformeconvesso}
Let~$\mu^+$ satisfy~\eqref{mu=sommadelta}.
Then, $\mathcal{X}_\mu (\Omega)$ is a uniformly convex space.
\end{proposition}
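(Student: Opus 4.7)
The plan is to verify uniform convexity of $(\mathcal{X}_\mu(\Omega),\|\cdot\|_\mu)$ via the sequential characterization, reducing it term-by-term to the uniform convexity of each seminorm $[\cdot]_{s_k,p_k}$. Concretely, I would take sequences $\{u_n\}$ and $\{v_n\}$ in $\mathcal{X}_\mu(\Omega)$ with $\|u_n\|_\mu=\|v_n\|_\mu=1$ and $\|(u_n+v_n)/2\|_\mu\to 1$, and aim to prove $\|u_n-v_n\|_\mu\to 0$ as $n\to\infty$. Expanding $\|(u_n+v_n)/2\|_\mu=\sum_{k=0}^{+\infty}c_k\,[(u_n+v_n)/2]_{s_k,p_k}$ and applying the triangle inequality seminorm by seminorm yields $\|(u_n+v_n)/2\|_\mu\le \tfrac12(\|u_n\|_\mu+\|v_n\|_\mu)=1$, which the hypothesis forces to saturate in the limit. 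Hence the non-negative residuals
\[
\rho_n^k := c_k\left(\frac{[u_n]_{s_k,p_k}+[v_n]_{s_k,p_k}}{2}-[(u_n+v_n)/2]_{s_k,p_k}\right)
\]
satisfy $\sum_{k} \rho_n^k\to 0$; in particular $\rho_n^k\to 0$ for every fixed $k$ (in fact uniformly in $k$).

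Next, for each $k$ with $c_k>0$ the seminorm $[\cdot]_{s_k,p_k}$ is realised as an $L^{p_k}$-norm of a linear transformation of $u$ (the function itself if $s_k=0$, the weighted difference quotient $(u(x)-u(y))/|x-y|^{N/p_k+s_k}$ if $s_k\in(0,1)$, and $\nabla u$ if $s_k=1$). Since $p_k\in(1,N)$, the relevant $L^{p_k}$ space is uniformly convex, and a Clarkson (or Hanner) inequality converts the vanishing defect $\rho_n^k/c_k$ into a quantitative control forcing $[u_n-v_n]_{s_k,p_k}\to 0$ for every fixed $k$, using also that $[u_n]_{s_k,p_k}\le 1/c_k$ provides a fixed bound for each individual $k$.

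It remains to pass this pointwise vanishing through the infinite sum so as to conclude $\|u_n-v_n\|_\mu=\sum_k c_k\,[u_n-v_n]_{s_k,p_k}\to 0$. The natural pointwise majorant $c_k[u_n-v_n]_{s_k,p_k}\le c_k([u_n]_{s_k,p_k}+[v_n]_{s_k,p_k})$ has $k$-sum at most $2$ uniformly in $n$. I would then split the series into a finite head (which vanishes as $n\to\infty$ thanks to the previous step) plus a tail, to be controlled uniformly in $n$ using $\sum_k c_k<+\infty$ from~\eqref{mu=sommadelta} together with the embeddings of Proposition~\ref{propembedding} and the seminorm comparability of Theorem~\ref{sobolevmio}. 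This last equi-integrability-type step---preventing ``mass at infinity'' in the discrete index $k$---is the main obstacle: the pointwise control at each $k$ follows cleanly from seminorm-by-seminorm uniform convexity, but ensuring that the tail is uniformly small in $n$ is the delicate point, and is precisely where the convergent-series assumption on $\mu^+$ is used in an essential way.
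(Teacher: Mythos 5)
Your proposal takes a genuinely different route from the paper. The paper does not use the sequential characterization: starting from $\|u\|_\mu=\|v\|_\mu=1$ and $\|u-v\|_\mu\ge\varepsilon$, it observes that some index $\bar k$ must satisfy $c_{\bar k}[u-v]_{s_{\bar k},p_{\bar k}}>0$, applies the uniform convexity of $W^{s_{\bar k},p_{\bar k}}_0(\Omega)$ to obtain a strict defect $\delta_{\bar k}$ at that single index, and then simply sums the triangle inequality over all the other $k$. This bypasses entirely the need to control a tail: one positive defect suffices, and the series is never split. Your proof, by contrast, tries to prove per-index convergence $[u_n-v_n]_{s_k,p_k}\to 0$ for each $k$ and then to resummate. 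This is a heavier machine than the paper uses, and it is where the argument breaks.

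The key gap is in your per-index step. You claim that the vanishing defect $\rho_n^k/c_k = \tfrac12\bigl([u_n]_{s_k,p_k}+[v_n]_{s_k,p_k}\bigr) - \bigl[\tfrac{u_n+v_n}{2}\bigr]_{s_k,p_k} \to 0$, together with the \emph{upper} bound $[u_n]_{s_k,p_k}\le 1/c_k$, forces $[u_n-v_n]_{s_k,p_k}\to 0$ by Clarkson/Hanner. This is false: uniform convexity (and the Clarkson inequalities) control the difference only when the two vectors are comparable in size. If $[u_n]_{s_k,p_k}\to 0$ while $[v_n]_{s_k,p_k}\to m_k>0$, the defect $\rho_n^k/c_k$ can tend to zero even though $[u_n-v_n]_{s_k,p_k}\to m_k\ne 0$. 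The simplest model: in $\R$ with the absolute value, take $a_n=1/n$ and $b_n=1$; then $\tfrac12(|a_n|+|b_n|)-\bigl|\tfrac{a_n+b_n}{2}\bigr|=0$ for every $n$, yet $|a_n-b_n|\to 1$. The normalization $\|u_n\|_\mu=\|v_n\|_\mu=1$ bounds each $c_k[u_n]_{s_k,p_k}$ from above but gives no lower bound on any single seminorm, so this degeneration cannot be ruled out at the level of a fixed $k$. A correct per-index argument requires an extra structural input (e.g., controlling the ratio $[u_n]_{s_k,p_k}/[v_n]_{s_k,p_k}$, or working only on indices where both seminorms are bounded below).

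In addition, the tail step is not actually carried out. You outline dominating $c_k[u_n-v_n]_{s_k,p_k}$ by $c_k([u_n]_{s_k,p_k}+[v_n]_{s_k,p_k})$, whose $k$-sum is $2$ uniformly in $n$, and you invoke $\sum_k c_k<\infty$, Proposition~\ref{propembedding} and Theorem~\ref{sobolevmio} — but none of these produces the equi-integrability in $k$ you acknowledge you need. Having a uniformly bounded majorant does not prevent mass from escaping to $k\to\infty$; one needs a $n$-independent smallness of $\sum_{k>K}c_k\bigl([u_n]_{s_k,p_k}+[v_n]_{s_k,p_k}\bigr)$, which is not implied by $\|u_n\|_\mu=1$ alone. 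You flag this as ``the main obstacle'' but do not close it, so the proof is incomplete on this point as well. The paper's single-index argument avoids this entire question, and that is the substantive difference between the two approaches.
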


\begin{proof}
{F}rom~\eqref{mu=sommadelta} we have that, for any~$u\in \mathcal{X}_\mu (\Omega)$,
the definition in~\eqref{definizionenorma} reads as
\[
\|u\|_\mu =\sum_{k=0}^{+\infty}c_k[u]_{s_k,p_k}.
\]
We want to prove that for any~$\varepsilon\in (0,2]$ there exists~$\delta>0$ such that,
if~$u$, $v\in \mathcal{X}_\mu (\Omega)$ are such that~$\|u\|_\mu=\|v\|_\mu=1$ and~$\|u-v\|_\mu\geq \varepsilon$, then~$\|u+v\|_\mu\leq 2-\delta$.
To this end, we assume that
\begin{equation}\label{mu=sommadelta1}
\sum_{k=0}^{+\infty}c_k[u]_{s_k,p_k}=\sum_{k=0}^{+\infty}c_k[v]_{s_k,p_k}=1
\end{equation}
and
\begin{equation}\label{mu=sommadelta2}
\sum_{k=0}^{+\infty}c_k[u-v]_{s_k,p_k}\geq \varepsilon.
\end{equation}
{F}rom~\eqref{mu=sommadelta1} we have that, for any~$k\in\N$,
\[
c_k[u]_{s_k,p_k}=[c_k u]_{s_k,p_k}\leq 1\qquad \mbox{and}\qquad c_k[v]_{s_k,p_k}=[c_k v]_{s_k,p_k}\leq 1 
.
\]
Moreover, by~\eqref{mu=sommadelta2}, we infer that there exists some~$\bar{k}$ such that
\[
c_{\bar{k}}[u-v]_{s_{\bar{k}},p_{\bar{k}}}=[c_{\bar{k}} u-c_{\bar{k}} v]_{s_{\bar{k}},p_{\bar{k}}}>0.
\]
Notice that we can define
\[ 
\varepsilon_{\bar{k}}:=c_{\bar{k}}[u-v]_{s_{\bar{k}},p_{\bar{k}}}, 
\]
so that~$\varepsilon_{\bar{k}}\leq  2$.

Now, for this~$\bar{k}$ we have that~$c_{\bar{k}} u$, $c_{\bar{k}} v\in W^{s_{\bar{k}},p_{\bar{k}}}_0(\Omega)$, which is a uniformly convex space with the norm~$[\,\cdot\,]_{s_{\bar{k}},p_{\bar{k}}}$ as~$p_{\bar{k}}>1$.
Thus, there exists some~$\delta_{\bar{k}}>0$ such that
\[
[c_{\bar{k}} u+c_{\bar{k}} v]_{s_{\bar{k}},p_{\bar{k}}}\leq [c_{\bar{k}} u]_{s_{\bar{k}},p_{\bar{k}}}+[c_{\bar{k}} v]_{s_{\bar{k}},p_{\bar{k}}}
-\delta_k.
\]
{F}rom this and the triangle inequality, and recalling~\eqref{mu=sommadelta1}, we deduce that
\[
\|u+v\|_\mu=\sum_{k=0}^{+\infty}c_k[u+v]_{s_k,p_k}\leq \sum_{k=0}^{+\infty}\Big(c_k[u]_{s_k,p_k}+c_k[v]_{s_k,p_k}\Big)-\delta_{\bar{k}} 
=2-\delta_{\bar{k}}.
\]
The desired result follows by setting~$
\delta:=\delta_{\bar{k}}$.
\end{proof}

\subsection{Reabsorbing properties}\label{secmu-}
We now show that the negative components of the signed measure~$\mu$ (if any) can be conveniently ``reabsorbed" into the positive ones.
Indeed, the following result holds:

\begin{theorem}\label{mupiumangiamumeno}
Let~\eqref{ipotesimu11}, \eqref{ipotesimuprodotto}, \eqref{ipotesimus} and~\eqref{ipotesimup} be satisfied.

Then, there exists a constant~$c_0 = c_0(N, \Omega)>0$ such that, for any~$u\in\mathcal X_{\mu}(\Omega)$,
\[
\iint_{[0, \overline s)\times (1, N)} [u]^p_{s, p} \, d\mu^- (s, p) \le c_0 \gamma \iint_{[\overline s,1]\times (1, N)} [u]^p_{s, p} \, d\mu(s, p).
\]
\end{theorem}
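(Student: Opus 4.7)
The plan is to exploit the product structure \eqref{ipotesimuprodotto} of $\mu^\pm$ to reduce the double integral to iterated integrals (by Fubini), apply Theorem~\ref{sobolevmio} to compare $[u]_{s,p}$ for $s<\overline s$ with $[u]_{\overline s,p}$ at a common exponent $p$, absorb the factor $\mu_s^-([0,\overline s))$ via hypothesis~\eqref{ipotesimus}, and finally use Theorem~\ref{sobolevmio} in the reverse direction on $[\overline s,1]$ to bound $[u]_{\overline s,p}^p\,\mu_s^+([\overline s,1])$ from above by $\int_{[\overline s,1]}[u]_{s,p}^p\,d\mu_s^+(s)$. The compact support assumption \eqref{ipotesimup} will be used to make the $p$-dependent constants from Theorem~\ref{sobolevmio} uniform on $[1+\delta,N-\eta]$.

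More precisely, I would proceed in the following order. First, by \eqref{ipotesimuprodotto} and Fubini,
\[
\iint_{[0,\overline s)\times(1,N)} [u]_{s,p}^p\,d\mu^-(s,p)
=\int_{(1,N)}\!\left(\int_{[0,\overline s)}[u]_{s,p}^p\,d\mu_s^-(s)\right)d\mu_p(p),
\]
and by \eqref{ipotesimup} the outer integral is effectively over $p\in[1+\delta,N-\eta]$. Second, for each such $p$ and each $s\in[0,\overline s)$, Theorem~\ref{sobolevmio} gives $[u]_{s,p}\le C(N,\Omega,p)\,[u]_{\overline s,p}$, so
\[
\int_{[0,\overline s)}[u]_{s,p}^p\,d\mu_s^-(s)\le C(N,\Omega,p)^p\,[u]_{\overline s,p}^p\,\mu_s^-([0,\overline s)).
\]
Since $p$ ranges over the compact interval $[1+\delta,N-\eta]$, the map $p\mapsto C(N,\Omega,p)^p$ is bounded above by some constant $K_1=K_1(N,\Omega,\delta,\eta)$. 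Third, applying \eqref{ipotesimus} yields
\[
\int_{[0,\overline s)}[u]_{s,p}^p\,d\mu_s^-(s)\le K_1\gamma\,[u]_{\overline s,p}^p\,\mu_s^+([\overline s,1]).
\]

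Fourth, I would use Theorem~\ref{sobolevmio} in the \emph{other} direction: for $s\in[\overline s,1]$ we have $\overline s\le s$, hence $[u]_{\overline s,p}\le C(N,\Omega,p)\,[u]_{s,p}$, so after raising to the $p$-th power and integrating against $\mu_s^+$,
\[
[u]_{\overline s,p}^p\,\mu_s^+([\overline s,1])\le K_2\int_{[\overline s,1]}[u]_{s,p}^p\,d\mu_s^+(s),
\]
with $K_2=K_2(N,\Omega,\delta,\eta)$ again uniform on $p\in[1+\delta,N-\eta]$. Integrating the two chained inequalities with respect to $d\mu_p(p)$, invoking Fubini once more, and using the product structure \eqref{ipotesimuprodotto} gives
\[
\iint_{[0,\overline s)\times(1,N)}[u]_{s,p}^p\,d\mu^-(s,p)\le K_1K_2\,\gamma\iint_{[\overline s,1]\times(1,N)}[u]_{s,p}^p\,d(\mu_s^+\times\mu_p)(s,p).
\]
Finally, the condition $\mu^-([\overline s,1]\times(1,N))=0$ from \eqref{ipotesimu11} means $\mu=\mu_s^+\times\mu_p$ on $[\overline s,1]\times(1,N)$, so the right-hand side equals $K_1K_2\,\gamma \iint_{[\overline s,1]\times(1,N)}[u]_{s,p}^p\,d\mu(s,p)$, and setting $c_0:=K_1K_2$ closes the estimate.

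The only real obstacle is the uniform control of the constants from Theorem~\ref{sobolevmio} as $p$ varies, which is exactly what \eqref{ipotesimup} is there to guarantee; the rest is bookkeeping built on the product decomposition \eqref{ipotesimuprodotto} and the quantitative smallness \eqref{ipotesimus} of $\mu_s^-$. One minor subtlety worth flagging is that the statement declares $c_0=c_0(N,\Omega)$, whereas the argument naturally produces $c_0=c_0(N,\Omega,\delta,\eta)$; this is simply a notational convention of the authors, since $\delta$ and $\eta$ are fixed structural parameters of the measure~$\mu_p$.
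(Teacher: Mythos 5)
Your proof is correct and follows essentially the same route as the paper's: both use the product structure \eqref{ipotesimuprodotto} together with \eqref{ipotesimup} to reduce to an iterated integral over $p\in[1+\delta,N-\eta]$, apply Theorem~\ref{sobolevmio} once to pass from $[u]_{s,p}$ ($s<\overline s$) to $[u]_{\overline s,p}$, invoke \eqref{ipotesimus}, apply Theorem~\ref{sobolevmio} a second time to pass from $[u]_{\overline s,p}$ to $[u]_{s,p}$ ($s\ge\overline s$), and conclude via \eqref{ipotesimu11}. Even your remark about the constant depending on $\delta,\eta$ matches the paper's convention, where these structural parameters of $\mu_p$ are simply absorbed into the notation $c_0(N,\Omega)$.
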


\begin{proof}
We notice that, by~\eqref{ipotesimuprodotto} and~\eqref{ipotesimup},
\begin{equation}\label{rimangio1}
\iint_{[0, \overline s)\times (1, N)} [u]^p_{s, p} \, d\mu^- (s, p)=\int_{[1+\delta,N-\eta]}\left(\,\int_{[0, \overline s)}[u]^p_{s, p} \, d\mu_s^-(s)\right)\,d\mu_p(p).
\end{equation}
Moreover, by Theorem~\ref{sobolevmio}, it follows that
\[
\begin{split}
\int_{[1+\delta,N-\eta]}\left(\,\int_{[0, \overline s)}[u]^p_{s, p} \, d\mu_s^-(s)\right)\,d\mu_p (p)&\le
\int_{[1+\delta,N-\eta]}\left(\,\int_{[0, \overline s)}C^p(N,\Omega,p)[u]^p_{\overline s, p} \, d\mu_s^-(s)\right)\,d\mu_p(p)\\
&= \int_{[1+\delta,N-\eta]} C^p(N,\Omega,p)[u]^p_{\overline s, p} \, \mu_s^-\big([0, \overline s)\big)\,d\mu_p(p).
\end{split}
\]
Also, since Theorem~\ref{sobolevmio} holds true for 
every~$p\in (1,N)$, we have that
\begin{equation}\label{costanteteorema1}
C_1(N, \Omega):=\max_{p\in [1+\delta, N-\eta]} C^{p}(N, \Omega, p) <+\infty.
\end{equation}
Therefore, by~\eqref{ipotesimus} and Theorem~\ref{sobolevmio},
\begin{equation}\label{rimangio2}
\begin{split}
\int_{[1+\delta,N-\eta]}\left(\,\int_{[0, \overline s)}[u]^p_{s, p} \, d\mu_s^-(s)\right)\,d\mu_p(p)&\le C_1(N,\Omega)\int_{[1+\delta,N-\eta]}[u]^p_{\overline s, p} \, \mu_s^-\big([0, \overline s)\big)\,d\mu_p(p) \\
&\leq \gamma C_1(N,\Omega)\int_{[1+\delta,N-\eta]}[u]^p_{\overline s, p} \, \mu_s^+\big([\overline s,1]\big)\,d\mu_p(p) \\
&= \gamma C_1(N,\Omega)\int_{[1+\delta,N-\eta]}
\left(\,\int_{[\overline s,1]}[u]^p_{\overline s, p}\,d\mu_s^+(s)\right)\,d\mu_p(p)\\
&\leq \gamma C_1^2(N,\Omega)\int_{[1+\delta,N-\eta]}
\left(\,\int_{[\overline s,1]}[u]^p_{s, p}\,d\mu_s^+(s)\right)\,d\mu_p(p).
\end{split}
\end{equation}
Accordingly, by using~\eqref{rimangio2} and~\eqref{rimangio1},
we conclude that
\[
\begin{aligned}
\iint_{[0, \overline s)\times (1, N)} [u]^p_{s, p} \, d\mu^- (s, p)
&\leq \gamma C_1^2(N,\Omega)\int_{[1+\delta,N-\eta]}
\left(\,\int_{[\overline s,1]}[u]^p_{s, p}\,d\mu_s^+(s)\right)\,d\mu_p(p) \\
&=c_0\gamma\iint_{[\overline s,1]\times (1, N)} [u]^p_{s, p} \, d\mu^+ (s, p),
\end{aligned} 
\]
where~$c_0:=C_1^2(N,\Omega)$. 

Finally, by~\eqref{ipotesimu11}, we see that
\[
\iint_{[\overline s,1]\times (1, N)} [u]^p_{s, p} \, d\mu^+ (s, p)
=\iint_{[\overline s,1]\times (1, N)} [u]^p_{s, p} \, d\mu (s, p),
\]
which concludes the proof.
\end{proof}

{F}rom Theorem~\ref{mupiumangiamumeno} we deduce the following result:

\begin{corollary}\label{corollarymumeno}
Let~\eqref{ipotesimu11}, \eqref{ipotesimuprodotto}, \eqref{ipotesimus} and~\eqref{ipotesimup} be satisfied.

Then, there exists~$c_1 = c_1(N, \Omega)>0$ such that, for any~$u\in\mathcal X_{\mu}(\Omega)$,
\[
\iint_{[0, \overline s)\times (1, N)}\frac1p [u]^p_{s, p} \, d\mu^- (s, p) \le c_1 \gamma \iint_{[\overline s,1]\times (1, N)}\frac1p [u]^p_{s, p} \, d\mu(s, p).
\]
Explicitely, one can take
\begin{equation}\label{constantreabs}
c_1:= N\,C_1^2 (N, \Omega), 
\end{equation}
being~$C_1(N, \Omega)$ as in~\eqref{costanteteorema1}.
\end{corollary}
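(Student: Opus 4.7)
The plan is to derive Corollary \ref{corollarymumeno} as an almost immediate consequence of Theorem \ref{mupiumangiamumeno}, the only additional ingredient being the elementary trapping of the factor~$1/p$ between two universal constants on the support of the relevant measures. The key observation is that both integration domains are contained in~$\Sigma=[0,1]\times(1,N)$, so that $p$ ranges over~$(1,N)$; this gives $1/p\le 1$ throughout, which is useful on the left-hand side, and $1/p\ge 1/N$, which is useful on the right-hand side.

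Concretely, I would first bound the left-hand side from above by dropping the factor~$1/p\le 1$, obtaining
\[
\iint_{[0,\overline s)\times(1,N)}\frac{1}{p}[u]^p_{s,p}\,d\mu^-(s,p)\le \iint_{[0,\overline s)\times(1,N)}[u]^p_{s,p}\,d\mu^-(s,p),
\]
and then invoke Theorem \ref{mupiumangiamumeno} to control the right-hand side by~$c_0\gamma\iint_{[\overline s,1]\times(1,N)}[u]^p_{s,p}\,d\mu(s,p)$, with $c_0=C_1^2(N,\Omega)$. Next, since on $[\overline s,1]\times(1,N)$ the signed measure~$\mu$ coincides with the positive measure~$\mu^+$ by \eqref{ipotesimu11}, I can safely use the pointwise inequality $1\le N\cdot(1/p)$ under the integral, which yields
\[
\iint_{[\overline s,1]\times(1,N)}[u]^p_{s,p}\,d\mu(s,p)\le N\iint_{[\overline s,1]\times(1,N)}\frac{1}{p}[u]^p_{s,p}\,d\mu(s,p).
\]
Chaining the two displays produces the conclusion with $c_1:=N\,c_0=N\,C_1^2(N,\Omega)$, in agreement with \eqref{constantreabs}.

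There is no genuine obstacle here; the substantive work has already been carried out in Theorem \ref{mupiumangiamumeno}, and the role of the corollary is simply to restate the reabsorption property in the normalized form~$\frac{1}{p}[u]^p_{s,p}$, which is the density that will naturally appear in the variational formulations of problems~\eqref{P1} and~\eqref{problemaW} treated in the subsequent sections.
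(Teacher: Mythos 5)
Your proof is correct and follows essentially the same route as the paper: drop the factor~$1/p\le 1$ on the left, apply Theorem~\ref{mupiumangiamumeno}, and reintroduce~$1/p$ on the right via~$1\le N/p$, giving~$c_1=N\,c_0=N\,C_1^2(N,\Omega)$. Your explicit remark that~$\mu=\mu^+$ on~$[\overline s,1]\times(1,N)$ (needed to apply a pointwise inequality under the integral sign) is implicit in the paper's proof but worth spelling out as you did.
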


\begin{proof}
We notice that, since~$p\in (1, N)$, we can employ Theorem~\ref{mupiumangiamumeno}. Thus, there exists a constant~$c_0 = c_0(N, \Omega)>0$ such that
\[
\begin{split}
\iint_{[0, \overline s)\times (1, N)}\frac1p [u]^p_{s, p} \, d\mu^- (s, p) &\le \iint_{[0, \overline s)\times (1, N)} [u]^p_{s, p} \, d\mu^- (s, p)\\
&\le c_0(N, \Omega)  \gamma \iint_{[\overline s,1]\times (1, N)} [u]^p_{s, p} \, d\mu(s, p)\\
&\le c_1(N, \Omega)  \gamma \iint_{[\overline s,1]\times (1, N)}\frac1p [u]^p_{s, p} \, d\mu(s, p),
\end{split}
\]
where~$c_1(N, \Omega):= N c_0(N, \Omega)$.
\end{proof}

\begin{remark}
The ``reabsorbing property" stated in Corollary~\ref{corollarymumeno}
entails that
\[
\iint_{[0, 1]\times (1, N)}\frac1p [u]^p_{s, p} \, d\mu^+(s,p)
-\iint_{[0, \overline s)\times (1, N)}\frac1p [u]^p_{s, p} \, d\mu^-(s,p)
\geq (1-c_1\gamma)\iint_{[0, 1]\times (1, N)}\frac1p [u]^p_{s, p} \, d\mu^+(s,p).
\]
Thus, if~$\gamma$ in~\eqref{ipotesimus} is small enough, namely
\begin{equation}\label{gammabound}
\gamma<\frac{1}{c_1},\quad
\mbox{being~$c_1$ as in~\eqref{constantreabs}},
\end{equation}
we have that
\begin{equation}\label{211BIS}
\iint_{[0, 1]\times (1, N)}\frac1p [u]^p_{s, p} \, d\mu^+(s,p)
-\iint_{[0, \overline s)\times (1, N)}\frac1p [u]^p_{s, p} \, d\mu^-(s,p)>
c\iint_{[0, 1]\times (1, N)}\frac1p [u]^p_{s, p} \, d\mu^+(s,p),
\end{equation}
for some~$c\in(0,1)$.
\end{remark}

\section{Existence of solution for problems~\eqref{problemaW} and~\eqref{senzamumeno}}\label{sec3}

In this section we give the proofs of Theorems~\ref{Weierstrass} 
and~\ref{MPT}.

\subsection{Preliminary observations}
We prove here a useful integration by parts formula.

\begin{proposition}\label{prop:intebyparts}
Let~$u$, $v\in C^2(\Omega)$ be such that~$u\equiv v\equiv0$ in~$\R^n\setminus\Omega$. Assume that
\[
v\,(-\Delta)^s_p u\in L^1(\Omega\times\Sigma, dx\, d\mu(s, p))
\]
and
\[
(x, y, s, p)\mapsto c_{N, s, p}\frac{|u(x)-u(y)|^{p-2} (u(x)-u(y))(v(x)-v(y))}{|x-y|^{N+sp}}\in L^1\big(\R^N\times\Sigma, dx\,dy\, d\mu(s, p)\big).
\]

Then,
\begin{equation}\label{intsp}
\begin{split}
&\iint_\Sigma \frac{c_{N, s, p}}{2} \iint_{\R^{2N}} \frac{|u(x)-u(y)|^{p-2} (u(x)-u(y))(v(x)-v(y))}{|x-y|^{N+sp}}\, dx \,dy\, d\mu(s, p)\\
&\qquad= \iint_\Sigma \int_\Omega v(x) (-\Delta)^s_pu(x) \, dx\, d\mu(s, p).
\end{split}
\end{equation}
\end{proposition}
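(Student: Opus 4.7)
The plan is to first verify the classical pointwise integration by parts identity for each fixed $(s,p)$ in the interior $(0,1)\times(1,N)$, and then to lift it to the $\mu$-integrated statement via Fubini's theorem, the two $L^1$ hypotheses being precisely what is needed to justify the interchange of integrals.

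Fix $(s,p)\in(0,1)\times(1,N)$. The target is the pointwise identity
\[
\int_\Omega v(x)(-\Delta)^s_p u(x)\,dx
=\frac{c_{N,s,p}}{2}\iint_{\R^{2N}}\frac{|u(x)-u(y)|^{p-2}(u(x)-u(y))(v(x)-v(y))}{|x-y|^{N+sp}}\,dx\,dy.
\]
I would write $(-\Delta)^s_p u$ as the limit of the truncated integral on $\R^N\setminus B_\varepsilon(x)$, multiply by $v(x)$ and integrate in $x$. The $C^2$-regularity of $u$ supplies the second-order Taylor cancellation that makes the singular kernel absolutely integrable after the principal-value symmetrization, while the vanishing of $u$ and $v$ outside $\Omega$ supplies the decay at infinity; dominated convergence then lets me pull the $\varepsilon$-limit out of the $x$-integral. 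On the truncated domain $\{|x-y|>\varepsilon\}$, Fubini legitimizes the change of variables $(x,y)\mapsto(y,x)$, which leaves $|u(x)-u(y)|^{p-2}|x-y|^{-N-sp}$ invariant but turns $(u(x)-u(y))$ into its negative and $v(x)$ into $v(y)$; averaging the original integral with its swapped counterpart produces the symmetrized form with the prefactor $1/2$ and the combination $(v(x)-v(y))$. Letting $\varepsilon\to 0$ delivers the pointwise identity.

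Next, I would integrate both sides against $d\mu(s,p)$ on $\Sigma$. The first hypothesis, $v\,(-\Delta)^s_p u\in L^1(\Omega\times\Sigma,\,dx\,d\mu(s,p))$, ensures that $\iint_\Sigma\int_\Omega v(-\Delta)^s_p u\,dx\,d\mu$ is well-defined and that Fubini applies to the right-hand side of \eqref{intsp}; the second hypothesis plays the analogous role for the symmetrized integrand, so $\iint_\Sigma\frac{c_{N,s,p}}{2}\iint_{\R^{2N}}\cdots\,dx\,dy\,d\mu$ is well-defined and admits the same freedom in the order of integration. Integrating the pointwise identity against $d\mu$ then transfers the pointwise equality to \eqref{intsp}.

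The main obstacle is the handling of the endpoint values $s\in\{0,1\}$, which $\mu$ may charge: by \eqref{cnsp} we have $c_{N,0,p}=c_{N,1,p}=0$, while the operator is defined at the endpoints through the conventions $(-\Delta)^0_p u=u$ and $(-\Delta)^1_p u=-\Delta_p u$, so a naive substitution gives a formal $0\cdot\infty$ on the right-hand side. The resolution is that the symmetrized integrand is to be interpreted as its continuous extension in $s$ up to $\{0,1\}$: the normalization \eqref{cnsp} is precisely the one for which the Maz'ya--Shaposhnikova and Bourgain--Brezis--Mironescu asymptotics hold, and these give the extension $\int_\Omega uv\,dx$ at $s=0$ and, by classical integration by parts for $-\Delta_p$, $\int_\Omega v(-\Delta_p u)\,dx$ at $s=1$. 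The two $L^1$ hypotheses are implicitly understood relative to this convention, so the pointwise identity extends to all of $\Sigma$ and Fubini closes the proof of \eqref{intsp}.
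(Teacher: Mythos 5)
Your proof follows the same route as the paper: establish the pointwise integration-by-parts identity for each fixed $(s,p)$ via the symmetrization/swap argument (exchanging $x\leftrightarrow y$ and using that $|u(x)-u(y)|^{p-2}(u(x)-u(y))$ is antisymmetric while the kernel is symmetric), then integrate against $d\mu$, using the two $L^1$ hypotheses to justify the double-integral interchange. You are in fact slightly more careful than the paper on two points: you carry out the symmetrization on the truncated set $\{|x-y|>\varepsilon\}$ before passing to the limit, whereas the paper splits $v(x)-v(y)$ directly into two separate integrals that need not be absolutely convergent on their own (this is implicitly a principal-value manipulation); and you spell out how to read the identity at the endpoints $s\in\{0,1\}$, where $c_{N,s,p}$ vanishes, via the continuous-in-$s$ extension that the normalization~\eqref{cnsp} is designed to produce. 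The paper addresses the endpoint issue only implicitly, through the later remark on the ``abuse of notation'' in the definition of weak solutions.
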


\begin{proof}
We point out that the integrability assumptions in Proposition~\ref{prop:intebyparts} guarantee that
both sides of~\eqref{intsp} are finite.

We aim to show that, for any~$(s, p)\in\Sigma$, 
\[
\frac{c_{N, s, p}}{2} \iint_{\R^{2N}} \frac{|u(x)-u(y)|^{p-2} (u(x)-u(y))(v(x)-v(y))}{|x-y|^{N+sp}}\, dx\, dy = \int_\Omega v(x) (-\Delta)^s_p u(x) \, dx.
\]
{F}rom this, the identity in~\eqref{intsp} will plainly follow.

Now, we have that
\begin{eqnarray*}
&&c_{N, s, p}  \iint_{\R^{2N}} \frac{|u(x)-u(y)|^{p-2} (u(x)-u(y))(v(x)-v(y))}{|x-y|^{N+sp}}\, dx\, dy \\
&=&c_{N, s, p}\iint_{\R^{2N}} v(x)\frac{|u(x)-u(y)|^{p-2} (u(x)-u(y))}{|x-y|^{N+sp}}\, dx\, dy \\
&&\qquad-c_{N, s, p}\iint_{\R^{2N}} v(y)\frac{|u(x)-u(y)|^{p-2} (u(x)-u(y))}{|x-y|^{N+sp}}\, dx\, dy \\
&=&2c_{N, s, p}\int_{\R^N} v(x)\int_{\R^N} \frac{|u(x)-u(y)|^{p-2} (u(x)-u(y))}{|x-y|^{N+sp}}\, dx\, dy \\
&= &2\int_\Omega v(x) \, (-\Delta)^s_p u(x) \,dx,
\end{eqnarray*}
as desired.
\end{proof}


\subsection{Proof of Theorem~\ref{Weierstrass}}
In this subsection we prove the existence of a nontrivial solution for problem~\eqref{problemaW}.
Throughout this subsection, we let~$\mu$ be as in~\eqref{muvbdshail} and we assume that~\eqref{anchequestat4390}, \eqref{ipotesimus} and~\eqref{ipotesimup} are satisfied.

We recall that, in what follows, $\overline s=\max\{s_1,\dots,s_n\}$.
Moreover, the exponents~$s_\sharp$ and~$p_\sharp$ in~\eqref{ipotesisharp} in this case boil down to~$\overline s$ and~$\widehat p$.

We remark that, in this setting,
\begin{equation}\label{musommafinita}
\mu^+(s,p):= \sum_{{i=1,\dots,n}\atop{j=1,\dots,m }}\delta_{(s_i, p_j)}.
\end{equation}
Notice that since~$\mu^+$ is a finite sum of Dirac's deltas, it can be rewritten
in the form of~\eqref{mu=sommadelta}
(and therefore, in particular, Proposition~\ref{uniformeconvesso}
holds true in this case).

With this choice,  the definition in~\eqref{definizionenorma} boils down to
\begin{equation}\label{nnorm}
\|u\|_\mu = \sum_{{i=1,\dots,n}\atop{j=1,\dots,m }} [u]_{s_i, p_j}.
\end{equation}
Moreover, taking into account~\eqref{muvbdshail},
we recall that the operator defined in~\eqref{elle} becomes
\begin{equation}\label{ellefinito}
L_\mu \, u:= \sum_{{i=1,\dots,n}\atop{j=1,\dots,m }}  (-\Delta)^{s_i}_{p_j} u - \sum_{j=1}^m \int_{[0, 1]} (-\Delta)^s_{p_j} u \, d\mu_s^-(s).
\end{equation}

In this setting, we have the following
definition of weak solutions for~\eqref{problemaW}.

\begin{definition}\label{defnwW}
We say that~$u\in\mathcal{X}_\mu (\Omega)$ is a weak solution of the problem~\eqref{problemaW} if, for any~$v\in\mathcal{X}_\mu (\Omega)$,
\[
\begin{split}
&\sum_{{i=1,\dots,n}\atop{j=1,\dots,m }}  \frac{c_{N, s_i, p_j}}{2} \iint_{\R^{2N}} \frac{|u(x)-u(y)|^{p_j-2} (u(x)-u(y))(v(x)-v(y))}{|x-y|^{N+s_i p_j}}\, dx\, dy \\
&\quad-\sum_{j=1}^m\int_{[0, \overline s)}\frac{c_{N, s, p_j}}{2} \iint_{\R^{2N}} \frac{|u(x)-u(y)|^{p_j-2} (u(x)-u(y))(v(x)-v(y))}{|x-y|^{N+sp_j}}\, dx\, dy\, d\mu_s^-(s)\\
&\qquad = \int_\Omega g(x) v(x)\, dx.
\end{split}
\]
\end{definition}

\begin{remark}
We stress that expressions such as
\[
\sum_{{i=1,\dots,n}\atop{j=1,\dots,m }} \frac{c_{N, s_i, p_j}}{2} \iint_{\R^{2N}} \frac{|u(x)-u(y)|^{p_j-2} (u(x)-u(y))(v(x)-v(y))}{|x-y|^{N+s_i p_j}}\, dx\, dy
\]
and
\[
\sum_{j=1}^m\int_{[0, \overline s)}\frac{c_{N, s, p_j}}{2} \iint_{\R^{2N}} \frac{|u(x)-u(y)|^{p_j-2} (u(x)-u(y))(v(x)-v(y))}{|x-y|^{N+sp_j}}\, dx\, dy\, d\mu_s^-(s)
\]
constitute a slight abuse of notation. To be precise,
for any~$i$ such that~$s_i=1$, one should write instead
\[
\sum_{j=1}^m \frac{1}{p_j}\int_\Omega |\nabla u(x)|^{p_j -2} \nabla u(x) \cdot\nabla v(x) \,dx.
\]
Similarly, for any~$i$ such that~$s_i=0$, one should write
\[
\sum_{j=1}^m \frac{1}{p_j}\int_\Omega |u(x)|^{p_j -2} u (x)v(x) \,dx.
\]
For the sake of shortness, however, we will accept the above abuse of notation whenever typographically convenient.
\end{remark}

Now, let~$g\in L^{\widehat{p}'}(\Omega)$.
Let~$J:\mathcal{X}_\mu (\Omega)\to\R$ be the functional defined as
\begin{equation}\label{functionalnJ}
\begin{split}
J(u):= \sum_{{i=1,\dots,n}\atop{j=1,\dots,m }}  [u]^{p_j}_{s_i, p_j} -\sum_{j=1}^m\int_{[0, \overline s)} \frac{1}{2p_j} [u]_{s, p_j}^{p_j} \,d\mu_s^-(s) -\int_\Omega g(x)u \,dx.
\end{split}
\end{equation}
Then, we have the following properties of~$J$:

\begin{lemma}
For every~$u\in\mathcal{X}_\mu (\Omega)$, we have that
\[
|J(u)|<+\infty.
\]
\end{lemma}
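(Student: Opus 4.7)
The plan is to split $J(u)$ into its three natural summands and bound each separately, using the finiteness of $\|u\|_\mu$ together with the tools gathered in Section~\ref{sec2}.

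First, the sum $\sum_{i,j}[u]^{p_j}_{s_i,p_j}$ poses no problem: since $\|u\|_\mu<+\infty$ for $u\in\mathcal{X}_\mu(\Omega)$ and, by~\eqref{nnorm}, each seminorm $[u]_{s_i,p_j}$ is bounded by $\|u\|_\mu$, this summand is trivially finite. For the integral term, the approach is to invoke Theorem~\ref{sobolevmio} to produce a constant $C=C(N,\Omega,p_j)>0$, independent of $s$, such that $[u]_{s,p_j}\le C\,[u]_{\overline s,p_j}$ for every $s\in[0,\overline s]$. This will yield
\[
\int_{[0,\overline s)}\frac{1}{2p_j}[u]^{p_j}_{s,p_j}\,d\mu_s^-(s)\le \frac{C^{p_j}}{2p_j}\,[u]^{p_j}_{\overline s,p_j}\,\mu_s^-([0,\overline s)),
\]
and the right-hand side is finite because $\overline s=s_i$ for some $i\in\{1,\dots,n\}$, so that $[u]_{\overline s,p_j}$ is itself one of the seminorms entering~\eqref{nnorm}, while $\mu_s^-$ is a finite measure (being a marginal of the finite measure $\mu^-$).

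Finally, for the linear term I would apply H\"older's inequality to obtain
\[
\Big|\int_\Omega g(x)u(x)\,dx\Big|\le \|g\|_{L^{\widehat p'}(\Omega)}\,\|u\|_{L^{\widehat p}(\Omega)},
\]
with $\|g\|_{L^{\widehat p'}(\Omega)}<+\infty$ by assumption. To bound $\|u\|_{L^{\widehat p}(\Omega)}$, observe that in the present setting $(s_\sharp,p_\sharp)=(\overline s,\widehat p)$ satisfies~\eqref{ipotesisharp} and $\mu^+$ is the finite sum of Dirac masses in~\eqref{musommafinita}; then Proposition~\ref{propembedding} (together with Remark~\ref{utile}) provides a continuous embedding of $\mathcal{X}_\mu(\Omega)$ into either $W_0^{1,\widehat p}(\Omega)$ if $\overline s=1$, or $L^{(p_\sharp)^*_{s_\sharp}}(\Omega)\hookrightarrow L^{\widehat p}(\Omega)$ if $\overline s<1$, so that $\|u\|_{L^{\widehat p}(\Omega)}\le C\|u\|_\mu<+\infty$.

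None of these steps presents a genuine obstacle; the only delicate point I foresee is the measurability of $s\mapsto[u]_{s,p_j}$ against $\mu_s^-$, which will be inherited by density from the continuity of this map for smooth $u$, together with the uniformity in $s$ of the constant in Theorem~\ref{sobolevmio}.
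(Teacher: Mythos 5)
Your proposal is correct and handles the first and third summands exactly as the paper does (finiteness of $\|u\|_\mu$ for the principal sum; H\"older plus the embedding of Proposition~\ref{propembedding} for $\int_\Omega gu$). For the middle term, however, you take a slightly different and in fact more economical route: you apply Theorem~\ref{sobolevmio} directly to bound $[u]_{s,p_j}\le C(N,\Omega,p_j)\,[u]_{\overline s,p_j}$ for $s\in[0,\overline s)$, then pull out the finite total mass $\mu_s^-([0,\overline s))$; since $\overline s=\max\{s_1,\dots,s_n\}$ is itself one of the $s_i$'s, the remaining seminorm $[u]_{\overline s,p_j}$ is controlled by $\|u\|_\mu$. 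The paper instead invokes Corollary~\ref{corollarymumeno}, which packages the same Sobolev comparison together with the quantitative hypotheses \eqref{ipotesimus}--\eqref{ipotesimup} to produce the factor $c_1\gamma$. For the mere purpose of this lemma, your version is cleaner: it needs only that $\mu_s^-$ is a finite measure and does not touch the $\gamma$-smallness assumption, whereas the paper's route is structured so as to reuse the reabsorbing estimate that will later be essential for coercivity in Lemma~\ref{u3itrfgekauqgo3wutiy348o87654BIS}. Your closing remark on measurability of $s\mapsto[u]_{s,p_j}$ is a fair point of hygiene, though it is not an obstacle here since the map is continuous in $s$ for fixed $u$ (by the normalization \eqref{cnsp}) and the argument passes to $\mathcal X_\mu(\Omega)$ by density exactly as you indicate.
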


\begin{proof}
Let~$u\in\mathcal{X}_\mu (\Omega)$.
Recalling~\eqref{nnorm}, we gave that
$$\sum_{{i=1,\dots,n}\atop{j=1,\dots,m }} [u]_{s_i, p_j}=\|u\|_\mu <+\infty.$$
This entails that
\[
\sum_{{i=1,\dots,n}\atop{j=1,\dots,m }}  \frac{1}{2p_j} [u]^{p_j}_{s_i, p_j} <+\infty.
\]

Moreover, by~\eqref{musommafinita} and Corollary~\ref{corollarymumeno}, we infer that
\[
\sum_{j=1}^m\int_{[0, \overline s)} \frac{1}{2p_j} [u]_{s, p_j}^{p_j} \,d\mu_s^-(s)\le c_1 \gamma \sum_{{i=1,\dots,n}\atop{j=1,\dots,m }}  \frac{1}{2p_j} [u]^{p_j}_{s_i, p_j}<+\infty.
\]
Also, by the H\"older inequality (with exponents~$\widehat{p}$
and~$\widehat{p}'$)
and Proposition~\ref{propembedding} (recall that~$p_\sharp=\widehat p$ in this setting), we have that
\begin{eqnarray*}
&&\left|\int_\Omega g(x)u(x)\, dx\right|\le
\left(\int_{\Omega}|g(x)|^{\widehat{p}'}\,dx\right)^{\frac1{\widehat{p}'}}
\left(\int_{\Omega}|u(x)|^{\widehat{p}}\,dx\right)^{\frac1{\widehat{p}}}
<+\infty
.\end{eqnarray*}
Gathering these pieces of information, we obtain that~$
|J(u)|<+\infty$, as desired.
\end{proof}

\begin{proposition}\label{Jbendefinito}
We have that, for all~$u$, $v\in\mathcal{X}_\mu (\Omega)$,
\begin{equation}\label{diff1}
\begin{split}
\langle J'(u), v\rangle &= \sum_{{i=1,\dots,n}\atop{j=1,\dots,m }} \frac{c_{N, s_i, p_j}}{2} \iint_{\R^{2N}} \frac{|u(x)-u(y)|^{p_j-2} (u(x)-u(y))(v(x)-v(y))}{|x-y|^{N+s_i p_j}}\, dx\, dy\\
&-\sum_{j=1}^m\int_{[0, \overline s)}\frac{c_{N, s, p_j}}{2} \iint_{\R^{2N}} \frac{|u(x)-u(y)|^{p_j-2} (u(x)-u(y))(v(x)-v(y))}{|x-y|^{N+sp_j}}\, dx\, dy\, d\mu_s^-(s)\\
&-\int_\Omega g(x) v (x)\,dx.
\end{split}
\end{equation}

Furthermore, critical points of~$J$ are weak solutions of the problem~\eqref{problemaW}.
\end{proposition}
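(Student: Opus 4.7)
The plan is to differentiate $J$ at $u$ in the direction $v$ term by term, showing that each Gateaux derivative exists, and then to recognise the resulting expression as the weak formulation given in Definition~\ref{defnwW}. Once~\eqref{diff1} is established, the second statement (critical points are weak solutions) becomes immediate: the condition $\langle J'(u),v\rangle=0$ for every $v\in\mathcal X_\mu(\Omega)$ is literally the identity in Definition~\ref{defnwW}.

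For the first block in $J$, namely $\sum_{i,j}\frac{1}{p_j}[u]_{s_i,p_j}^{p_j}$, I would treat each summand separately and then use that the sum is finite. For a single term with $s_i\in(0,1)$, compute $\frac{d}{dt}\Big|_{t=0}\frac{1}{p_j}[u+tv]_{s_i,p_j}^{p_j}$ by differentiating inside the double integral and justifying the interchange via the dominated convergence theorem: the pointwise derivative of the integrand is, up to the constant~$c_{N,s_i,p_j}$, the kernel $|u(x)-u(y)|^{p_j-2}(u(x)-u(y))(v(x)-v(y))/|x-y|^{N+s_ip_j}$, and a dominating function is obtained by Hölder's inequality with exponents $p_j$ and $p_j/(p_j-1)$, whose integrability is guaranteed by $u,v\in\mathcal X_\mu(\Omega)$ and the definition of the Gagliardo seminorm. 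The boundary cases $s_i=0$ and $s_i=1$ are handled identically using the $L^{p_j}$- and $W^{1,p_j}_0$-norms, respectively. This yields precisely the first line of~\eqref{diff1}.

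The real technical point is the differentiation of the $\mu_s^-$-block $\sum_{j=1}^m\int_{[0,\bar s)}\frac{1}{2p_j}[u+tv]_{s,p_j}^{p_j}\,d\mu_s^-(s)$ under the integral sign. For each fixed $j$, I would again differentiate pointwise in $s$ and invoke dominated convergence, but now I need a dominating function independent of $s\in[0,\bar s)$ and integrable against the finite measure $\mu_s^-$. By a difference-quotient version of Hölder's inequality, the quantity
\[
\left|\frac{[u+tv]_{s,p_j}^{p_j}-[u]_{s,p_j}^{p_j}}{t}\right|
\leq C_{p_j}\bigl([u]_{s,p_j}+[v]_{s,p_j}\bigr)^{p_j-1}\,[v]_{s,p_j}
\]
for $|t|\leq 1$, and I then apply Theorem~\ref{sobolevmio} to bound $[u]_{s,p_j}\leq C(N,\Omega,p_j)\,[u]_{\bar s,p_j}$ uniformly for $s\in[0,\bar s)$, and similarly for $v$. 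Since $p_j\in(1,N)$ lies in the compact set $\{p_1,\dots,p_m\}$, the constant is uniform; since $\mu_s^-$ is finite, the resulting bound is $\mu_s^-$-integrable. Dominated convergence then yields the second line of~\eqref{diff1}.

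The linear part $-\int_\Omega gu\,dx$ is trivially Gateaux-differentiable with derivative $-\int_\Omega gv\,dx$, well-defined by Hölder and the embedding $\mathcal X_\mu(\Omega)\hookrightarrow L^{\widehat p}(\Omega)$ from Proposition~\ref{propembedding}. Assembling the three pieces delivers~\eqref{diff1}. The main obstacle, as described above, is producing a uniform-in-$s$ dominating function for the differentiation under the $\mu_s^-$-integral; this is precisely the place where Theorem~\ref{sobolevmio} plays a decisive role, and the same structural estimate underlies the reabsorbing bounds of Corollary~\ref{corollarymumeno}.
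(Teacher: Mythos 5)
Your proposal is correct and follows essentially the same route as the paper: differentiate the three blocks of $J$ term by term, identify the Gateaux derivative with the weak formulation of Definition~\ref{defnwW}, and conclude that critical points are weak solutions. The paper simply writes out the first-order Taylor expansion of $J(u+\varepsilon v)$ and reads off the linear part, whereas you supply the justification it leaves implicit (Hölder-dominated differentiation under the integral, and the use of Theorem~\ref{sobolevmio} to get a dominating function uniform in $s$ for the $\mu_s^-$-integral); this is consistent with and fills in the paper's argument rather than replacing it.
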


\begin{proof}
By~\eqref{functionalnJ}, we obtain that
\[
\begin{split}
&J(u+\varepsilon v) = J(u) + \varepsilon \sum_{{i=1,\dots,n}\atop{j=1,\dots,m }}  \frac{c_{N, s_i, p_j}}{2} \iint_{\R^{2N}} \frac{|u(x)-u(y)|^{p_j-2} (u(x)-u(y))(v(x)-v(y))}{|x-y|^{N+s_i p_j}}\, dx\, dy\\
&\quad-\varepsilon\sum_{j=1}^m\int_{[0, \overline s)}\frac{c_{N, s, p_j}}{2} \iint_{\R^{2N}} \frac{|u(x)-u(y)|^{p_j-2} (u(x)-u(y))(v(x)-v(y))}{|x-y|^{N+sp_j}}\, dx\, dy\, d\mu_s^-(s)\\
&\quad+ o(\varepsilon)  \left(\sum_{{i=1,\dots,n}\atop{j=1,\dots,m }}  \frac{1}{2p_j} [u]^{p_j}_{s_i, p_j} -\sum_{j=1}^m\int_{[0, \overline s)} \frac{1}{2p_j} [u]_{s, p_j}^{p_j} \,d\mu_s^-(s)\right) +\varepsilon \int_\Omega g(x)v(x) \,dx.
\end{split}
\]
{F}rom this, we infer that
\[
\begin{split}
&\lim_{\varepsilon\to 0}\frac{J(u+\varepsilon v)-J(u)}{\varepsilon} =  \sum_{{i=1,\dots,n}\atop{j=1,\dots,m }} \frac{c_{N, s_i, p_j}}{2} \iint_{\R^{2N}} \frac{|u(x)-u(y)|^{p_j-2} (u(x)-u(y))(v(x)-v(y))}{|x-y|^{N+s_i p_j}}\, dx\, dy\\
&\qquad-\sum_{j=1}^m\int_{[0, \overline s)}\frac{c_{N, s, p_j}}{2} \iint_{\R^{2N}} \frac{|u(x)-u(y)|^{p_j-2} (u(x)-u(y))(v(x)-v(y))}{|x-y|^{N+sp_j}}\, dx\, dy\, d\mu_s^-(s)\\
&\qquad-\int_\Omega g(x) v (x)\,dx,
\end{split}
\]
namely~\eqref{diff1} holds true.

{F}rom~\eqref{diff1}, we see that if~$u$ is a critical point of~$J$, then~$u$ is a weak solution of~\eqref{problemaW}, thanks to Definition~\ref{defnwW}.
\end{proof}

\begin{lemma}\label{u3itrfgekauqgo3wutiy348o87654}
The functional~$J$ is weakly lower semicontinuous.
\end{lemma}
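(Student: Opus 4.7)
The plan is to split $J=A-B-C$, where
\[
A(u):=\sum_{i,j}[u]^{p_j}_{s_i,p_j},\qquad B(u):=\sum_{j=1}^m\int_{[0,\overline s)}\frac{1}{2p_j}[u]^{p_j}_{s,p_j}\,d\mu_s^-(s),\qquad C(u):=\int_\Omega g(x)\,u(x)\,dx,
\]
and to analyze each piece independently under weak convergence in $\mathcal{X}_\mu(\Omega)$. It suffices to prove that $A$ is weakly lower semicontinuous while $B$ and $C$ are weakly continuous; these three facts then combine to give $\liminf_k J(u_k)\ge A(u)-B(u)-C(u)=J(u)$.

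The analysis of $A$ and $C$ is routine. Each summand in $A$ is the $p_j$-th power ($p_j>1$) of the continuous seminorm $[\,\cdot\,]_{s_i,p_j}$ on $\mathcal{X}_\mu(\Omega)$, continuity being a consequence of $[u]_{s_i,p_j}\le\|u\|_\mu$ from~\eqref{nnorm}; hence $A$ is a finite sum of convex continuous functionals on a Banach space and is therefore weakly lower semicontinuous. For $C$, the continuous embedding $\mathcal{X}_\mu(\Omega)\hookrightarrow L^{\widehat p}(\Omega)$ (which follows from Proposition~\ref{propembedding}, since $\widehat p=p_\sharp\le(p_\sharp)^*_{s_\sharp}$), together with $g\in L^{\widehat p'}(\Omega)$ and H\"older's inequality, makes $C$ a bounded linear functional, hence weakly continuous.

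The heart of the proof, and the main obstacle, is the weak continuity of $B$: merely knowing weak lower semicontinuity of $B$ (which Fatou's lemma would yield for free) is not enough, because the minus sign in front of $B$ in $J$ would only give weak upper semicontinuity for $-B$ and spoil the estimate on $\liminf_k J(u_k)$. Given $u_k\rightharpoonup u$ in $\mathcal{X}_\mu(\Omega)$ with $\|u_k\|_\mu\le M$, Proposition~\ref{propembedding} yields boundedness of $\{u_k\}$ in $W^{\overline s,p_j}_0(\Omega)$ for each $j$, and Corollary~\ref{corembedding} combined with Remark~\ref{utile} (applicable because $\mu^+$ is a finite sum of Dirac deltas by~\eqref{musommafinita}) provides, along a subsequence, strong convergence $u_{k'}\to u$ in $L^{p_j}(\Omega)$ for every $j$. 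A standard interpolation inequality
\[
\|v\|_{W^{s,p_j}_0}\le C\,\|v\|_{L^{p_j}}^{1-s/\overline s}\,\|v\|_{W^{\overline s,p_j}_0}^{s/\overline s},
\]
applied to $v=u_{k'}-u$, then delivers $[u_{k'}]_{s,p_j}\to[u]_{s,p_j}$ for each fixed $s\in(0,\overline s)$, while the case $s=0$ follows directly from $L^{p_j}$ convergence. Since Theorem~\ref{sobolevmio} furnishes the uniform domination $[u_{k'}]^{p_j}_{s,p_j}\le C(N,\Omega,p_j)^{p_j}M^{p_j}$ valid for all $s\in[0,\overline s)$, and $\mu_s^-$ is a finite measure there, dominated convergence yields $B(u_{k'})\to B(u)$; a standard subsequence-of-subsequence argument finally upgrades the convergence to the entire sequence, completing the verification.
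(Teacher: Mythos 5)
Your proof is correct and follows essentially the same strategy as the paper's: split $J$ into the positive seminorm sum, the $\mu_s^-$-term, and the linear term; treat the first by Fatou/convexity, the third by the compact embedding plus H\"older, and the second (the crux) by interpolating $W^{s,p_j}$ between $L^{p_j}$ and $W^{\overline s,p_j}$ (the paper's cite of Brezis--Mironescu) together with the uniform bound from Theorem~\ref{sobolevmio} to pass to the limit under the $\mu_s^-$-integral. The only differences are cosmetic: you phrase the first term via convexity rather than Fatou, make the $s$-dependence of the interpolation exponent explicit, and invoke dominated convergence more carefully where the paper passes to the limit somewhat tersely.
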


\begin{proof}
We let~$u_k$ be a sequence in~$\mathcal X_\mu(\Omega)$ which converges
weakly to some~$u\in \mathcal X_\mu(\Omega)$ as~$k\to+\infty$
and we claim that
\begin{equation}\label{chevuoldirewlsc}
\liminf_{k\to +\infty}J(u_k)\geq J(u).
\end{equation}
To check this, we observe that, by Fatou's Lemma,
\begin{equation}\label{chevuoldirewlsc1}
\liminf_{k\to +\infty}\sum_{{i=1,\dots,n}\atop{j=1,\dots,m }} \frac{1}{2p_j} [u_k]^{p_j}_{s_i, p_j} \ge\sum_{{i=1,\dots,n}\atop{j=1,\dots,m }} \frac{1}{2p_j} [u]^{p_j}_{s_i, p_j}.
\end{equation}

Moreover, by Corollary~\ref{corembedding} we infer that~$u_k$ 
converges strongly to~$u$ in~$L^{\widehat{p}}(\Omega)$, and thus,
by the H\"older inequality,
\begin{eqnarray*}
&&\lim_{k\to+\infty}
\left| \int_\Omega g(x)\big(u_k(x)-u(x)\big) \,dx\right|
\le\lim_{k\to+\infty} \|g\|_{L^{\widehat{p}'}(\Omega)}\|u_k-u\|_{L^{\widehat{p}}(\Omega)}=0,
\end{eqnarray*}
which implies that
\begin{equation}\label{chevuoldirewlsc2}
\liminf_{k\to +\infty} \int_\Omega g(x)u_k(x) \,dx =
\int_\Omega g(x)u(x) \,dx.
\end{equation}

Now, we want to show that
\begin{equation}\label{chevuoldirewlsc3}
\lim_{k\to +\infty}\sum_{j=1}^m\int_{[0, \overline s)} \frac{1}{2p_j} [u_k]_{s, p_j}^{p_j} \,d\mu_s^-(s)
=\sum_{j=1}^m\int_{[0, \overline s)} \frac{1}{2p_j} [u]_{s, p_j}^{p_j} \,d\mu_s^-(s).
\end{equation}
Since~$u_k$ converges weakly in~$\mathcal X_\mu(\Omega)$, it is 
bounded in~$\mathcal X_\mu(\Omega)$.
Thus, for any~$i\in\{1,\dots,n\}$ and~$j\in\{1,\dots,m\}$, $u_k$ is bounded in~$W^{s_i,p_j}(\R^N)$. 
Up to a subsequence, we can assume that~$u_k$ converges weakly to~$u$
in~$W^{s_i,p_j}(\R^N)$ for any~$i$ and~$j$.
Then, from Corollary~\ref{corembedding} we have that
\begin{equation}\label{convlpj}
u_k\to u \ \mbox{as~$k\to+\infty$ in $L^{p_j}(\Omega)$, for any $j$}. 
\end{equation}

Moreover, for any~$s\in[0,\overline{s})$, we can exploit~\cite[Theorem 1]{MR3813967} and obtain that, for any~$\theta\in(0, 1)$, there exists a positive constant~$C=C(N, \theta, p_j, \overline s)$ such that
\[
\|u_k-u\|_{W^{s,p_j}(\R^N)}\le 
C\|u_k-u\|_{L^{p_j}(\R^N)}^\theta
\|u_k-u\|_{W^{\overline s,p_j}(\R^N)}^{1-\theta}.
\]
This and~\eqref{convlpj} entail that, for all~$j\in\{1,\dots,m\}$,
\begin{eqnarray*}&&
\lim_{k\to +\infty}\int_{[0, \overline s)} [u_k-u]_{s, p_j}^{p_j} \,d\mu_s^-(s)\le 
\lim_{k\to +\infty}\int_{[0, \overline s)} \|u_k-u\|_{W^{s,p_j}(\R^N)}^{p_j} \,d\mu_s^-(s)\\&&\qquad\qquad\le C
\lim_{k\to +\infty}\int_{[0, \overline s)} \|u_k-u\|_{L^{p_j}(\R^N)}^{\theta p_j}
\|u_k-u\|_{W^{\overline s,p_j}(\R^N)}^{(1-\theta)p_j} \,d\mu_s^-(s)\\
&&\qquad\qquad\le C
\mu_s^-([0, \overline s))\lim_{k\to +\infty}
\|u_k-u\|_{L^{p_j}(\R^N)}^{\theta p_j}\\&&\qquad\qquad=0.
\end{eqnarray*}
This gives~\eqref{chevuoldirewlsc3}.

Combining~\eqref{chevuoldirewlsc1}, \eqref{chevuoldirewlsc2} and~\eqref{chevuoldirewlsc3} we get the claim in~\eqref{chevuoldirewlsc}.
\end{proof}

We recall that, for every~$x_1,\dots,x_m\in \R$, 
\begin{equation}\label{elementaryinequality}
|x_1+\cdots+x_m|^p\leq m^{p-1}(|x_1|^p+\cdots +|x_m|^p)\quad\mbox{for any } p>1.
\end{equation}

We also point out that so far we have not used the smallness
of the parameter~$\gamma$ appearing in the assumption~\eqref{ipotesimus}. The next statement instead will
require the additional assumption that~$\gamma$ is sufficiently small in order
to reabsorb the contribution coming from the negative part of the measure.

\begin{lemma}\label{u3itrfgekauqgo3wutiy348o87654BIS}
There exists~$\gamma_0>0$ such that, if~$\gamma\in[0,\gamma_0]$,
the functional~$J$ is coercive, namely
\begin{equation}\label{claimcoercivo}
\lim_{\|u\|_{\mu}\to+\infty}\frac{J(u)}{\|u\|_{\mu}}=+\infty.
\end{equation}
\end{lemma}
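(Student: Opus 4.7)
The plan is to decompose $J$ into three pieces---the positive ``quadratic-like'' part, the negative contribution of $\mu^-$, and the linear forcing---then reabsorb the second into the first (via Corollary~\ref{corollarymumeno}, provided $\gamma$ is small), bound the third via the continuous embedding from Proposition~\ref{propembedding} and H\"older's inequality, and finally extract super-linear growth from the first. Throughout I set $p_{\min}:=\min\{p_1,\dots,p_m\}>1$, and recall that here $s_\sharp=\overline{s}$ and $p_\sharp=\widehat p$.

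For Step~1, I choose $\gamma_0:=1/c_0$ (with $c_0$ as in Theorem~\ref{mupiumangiamumeno}, so that $c_0\gamma/2<1$) and apply that theorem to the discrete measure~\eqref{muvbdshail} to get
\[
\sum_{j=1}^{m}\int_{[0,\overline s)}\tfrac{1}{2p_j}[u]_{s,p_j}^{p_j}\,d\mu_s^-(s)\le \tfrac{c_0\gamma}{2}\sum_{i,j}[u]_{s_i,p_j}^{p_j},
\]
which together with~\eqref{functionalnJ} gives $J(u)\ge \kappa\sum_{i,j}[u]_{s_i,p_j}^{p_j}-\int_\Omega g(x)u(x)\,dx$ for $\kappa:=1-c_0\gamma/2>0$. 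For Step~2, H\"older's inequality with conjugate pair $(\widehat p,\widehat p')$ combined with the embedding $\mathcal X_\mu(\Omega)\hookrightarrow L^{\widehat p}(\Omega)$ from Proposition~\ref{propembedding} (available thanks to~\eqref{anchequestat4390}) yields $\bigl|\int_\Omega g u\,dx\bigr|\le K\|u\|_\mu$ for some $K=K(N,\Omega,\mu^+,g)>0$.

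Step~3 is the heart of the matter: I want to show
\[
\sum_{i,j}[u]_{s_i,p_j}^{p_j}\ge C_\star\,\|u\|_\mu^{p_{\min}}\qquad\text{for $\|u\|_\mu$ large.}
\]
Picking $(i^\star,j^\star)$ maximizing $[u]_{s_i,p_j}$ and using that by~\eqref{nnorm} the sum $\|u\|_\mu$ has at most $nm$ terms, one has $[u]_{s_{i^\star},p_{j^\star}}\ge \|u\|_\mu/(nm)$; if $\|u\|_\mu\ge nm$ this maximum is at least $1$, and since $x^{p_{j^\star}}\ge x^{p_{\min}}$ on $[1,+\infty)$,
\[
\sum_{i,j}[u]_{s_i,p_j}^{p_j}\ge [u]_{s_{i^\star},p_{j^\star}}^{p_{j^\star}}\ge [u]_{s_{i^\star},p_{j^\star}}^{p_{\min}}\ge \frac{\|u\|_\mu^{p_{\min}}}{(nm)^{p_{\min}}}.
\]
Combining the three steps, for $\|u\|_\mu\ge nm$,
\[
\frac{J(u)}{\|u\|_\mu}\;\ge\;\frac{\kappa}{(nm)^{p_{\min}}}\,\|u\|_\mu^{p_{\min}-1}\,-\,K\;\longrightarrow\;+\infty\qquad\text{as }\|u\|_\mu\to+\infty,
\]
which is exactly~\eqref{claimcoercivo}.

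The main obstacle is Step~3. The elementary inequality~\eqref{elementaryinequality} links $\|u\|_\mu^p$ to $\sum[u]_{s_i,p_j}^p$ only for a \emph{uniform} exponent $p$, whereas here the summands carry different exponents $p_j$, so no direct power-mean inequality is available. The workaround---isolating the single largest term, which is forced to exceed $1$ once $\|u\|_\mu\ge nm$, and then using that $x^{p_{j^\star}}$ is minorized by $x^{p_{\min}}$ on $[1,+\infty)$---sidesteps the mismatch at the cost of obtaining coercivity with the worst-case growth exponent $p_{\min}$, which is still strictly larger than $1$ and therefore sufficient to dominate the linear term.
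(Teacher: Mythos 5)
Your proof is correct and follows the same overall architecture as the paper's: reabsorb the $\mu^-$-contribution into the positive part using the smallness of $\gamma$, control the forcing term via H\"older plus the embedding from Proposition~\ref{propembedding}, and then extract super-linear growth from the remaining sum of seminorms raised to the powers $p_j$. Steps~1 and~2 match the paper essentially verbatim (the paper packages Step~1 through~\eqref{211BIS}, i.e.\ Corollary~\ref{corollarymumeno}, rather than invoking Theorem~\ref{mupiumangiamumeno} directly, but the content is identical up to the benign constant tracking).

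Where you genuinely diverge from the paper is Step~3. The paper introduces the set $K$ of all index pairs $(i,j)$ with $[u]_{s_i,p_j}\ge 1$, applies the elementary inequality~\eqref{elementaryinequality} to bound $\sum_{(i,j)\in K}[u]_{s_i,p_j}^{p_{\min}}$ from below by $|K|^{1-p_{\min}}\bigl(\sum_{(i,j)\in K}[u]_{s_i,p_j}\bigr)^{p_{\min}}$, and then argues that $\sum_{(i,j)\in K}[u]_{s_i,p_j}\ge\|u\|_\mu-(nm-|K|)$ to pass from the partial sum over $K$ to $\|u\|_\mu$. You instead pick the single maximizer $(i^\star,j^\star)$, observe that it must satisfy $[u]_{s_{i^\star},p_{j^\star}}\ge\|u\|_\mu/(nm)\ge 1$ once $\|u\|_\mu\ge nm$, and simply drop all other terms:
\[
\sum_{i,j}[u]_{s_i,p_j}^{p_j}\ge[u]_{s_{i^\star},p_{j^\star}}^{p_{j^\star}}\ge[u]_{s_{i^\star},p_{j^\star}}^{p_{\min}}\ge\frac{\|u\|_\mu^{p_{\min}}}{(nm)^{p_{\min}}}.
\]
This is a modest but genuine simplification: it avoids both the bookkeeping with $K$ and any use of~\eqref{elementaryinequality}, while producing the same growth rate $\|u\|_\mu^{p_{\min}-1}$ in the end. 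Both arguments ultimately rest on the same observation, namely that once $\|u\|_\mu$ is large, at least one seminorm is bounded below by $1$, at which point the monotonicity of exponents on $[1,+\infty)$ lets you pass to the uniform worst-case exponent $p_{\min}>1$.

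One small caution, purely cosmetic: if one reads~\eqref{functionalnJ} as literally printed (first sum without the prefactor $\tfrac{1}{2p_j}$), your constant $\kappa=1-c_0\gamma/2$ works with $\gamma_0=1/c_0$; if instead one restores the prefactor $\tfrac{1}{2p_j}$ in the first sum, which is what the surrounding arguments (the finiteness lemma, the Fr\'echet derivative, and the paper's own coercivity proof via~\eqref{211BIS}) actually use, then the reabsorption costs an extra factor of $N$ and $\gamma_0$ should be $1/(Nc_0)=1/c_1$, matching~\eqref{gammabound}. This does not affect the structure or validity of your argument, only the explicit constant.
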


\begin{proof}
{F}rom~\eqref{211BIS}, \eqref{functionalnJ}
and the H\"older inequality we have that
\[
J(u)\ge c\sum_{{i=1,\dots,n}\atop{j=1,\dots,m}} \frac{1}{2p_j} [u]^{p_j}_{s_i, p_j} -\|g\|_{L^{\widehat{p}'}(\Omega)}\|u\|_{L^{\widehat{p}}(\Omega)},
\]
provided that~$\gamma$ is taken sufficiently small.

This and Proposition~\ref{propembedding} give that
\begin{equation}\label{314BIS}
J(u)\ge c\sum_{{i=1,\dots,n}\atop{j=1,\dots,m}} \frac{1}{2p_j} [u]^{p_j}_{s_i, p_j}  -C\|u\|_{\mu},
\end{equation}
for some~$C>0$.

Now we notice that if~$\|u\|_\mu \ge nm$,
then the set
\[
K:=\Big\{(i, j)\in\{1,\cdots,n\}\times\{1,\cdots,m\} \mbox{ s.t. } [u]_{s_i,p_j}\ge 1 \Big\}
\]
is non empty. 

Hence, if~$\|u\|_{\mu}$ goes to infinity, 
then, for at least one couple~$(i, j)\in K$, we have that~$[u]_{s_i,p_j}$ goes to infinity. Accordingly,
\begin{equation}\label{sommaKdivergeJ}
\lim_{\|u\|_{\mu}\to+\infty}\sum_{(i, j)\in K} [u]_{s_i,p_j}=+\infty.
\end{equation}

We denote by~$|K|$ the cardinality of the set~$K$ and we define
\[
p_{min}:=\min_{(i, j)\in K}\{p_j\}.
\]
As a result, by using~\eqref{elementaryinequality}, 
\begin{equation}\label{sommaKdiverge2J}
\sum_{{i=1,\dots,n}\atop{j=1,\dots,m}}[u]_{s_i,p_j}^{p_j}\ge \sum_{(i, j)\in K}[u]_{s_i,p_j}^{p_j}\ge \sum_{(i, j)\in K}[u]_{s_i,p_j}^{p_{min}}\ge \frac{1}{|K|^{p_{min}-1}}
\left(\sum_{(i, j)\in K}[u]_{s_i,p_j}\right)^{p_{min}}.
\end{equation}
We also have that
\begin{equation*}
\|u\|_\mu=\sum_{{i=1,\dots,n}\atop{j=1,\dots,m}} [u]_{s_i,p_j}
\sum_{(i, j)\in K}[u]_{s_i,p_j}+\sum_{(i, j)\in \{1,\cdots,n\}\times\{1,\cdots,m\}\setminus K}[u]_{s_i,p_j}
\le \sum_{(i, j)\in K}[u]_{s_i,p_j}+nm-|K|.
\end{equation*}
Therefore, combining this, \eqref{314BIS}, \eqref{sommaKdivergeJ}
and~\eqref{sommaKdiverge2J},
\[
\begin{aligned}
\lim_{\|u\|_{\mu}\to+\infty}\frac{J(u)}{\|u\|_\mu}&
\ge \lim_{\|u\|_{\mu}\to+\infty} \frac{c}{2\widehat{p}\,\|u\|_\mu}
\sum_{{i=1,\dots,n}\atop{j=1,\dots,m}} [u]_{s_i,p_j}^{p_j} -C \\
&\geq \frac{c}{2\widehat{p}\,|K|^{p_{min}-1}} \lim_{\|u\|_{\mu}\to+\infty}
\dfrac{\displaystyle \left(\sum_{(i, j)\in K}[u]_{s_i,p_j}\right)^{p_{min}}}{\displaystyle\sum_{(i, j)\in K}[u]_{s_i,p_j}+nm-|K|}-C\\
&=+\infty,
\end{aligned}
\]
which proves the claim in~\eqref{claimcoercivo}.
\end{proof}

We now proceed with the proof of Theorem~\ref{Weierstrass}.

\begin{proof}[Proof of Theorem~\ref{Weierstrass}]
In light of Proposition~\ref{Jbendefinito}, we aim at finding
critical points of the functional~$J$ in~\eqref{functionalnJ}.

Since~$J$ is weakly lower semicontinuous (thanks to Lemma~\ref{u3itrfgekauqgo3wutiy348o87654})
and
coercive (thanks to Lemma~\ref{u3itrfgekauqgo3wutiy348o87654BIS})
and the space~$\mathcal X_\mu(\Omega)$ is reflexive (see Proposition~\ref{uniformeconvesso}), by the Weierstrass theorem, we infer the existence of a minimizer for the functional~$J$,
which is a weak solution of problem~\eqref{problemaW}.
This establishes the first claim in Theorem~\ref{Weierstrass}.

In addition, if~$\mu^-\equiv 0$, then the functional~$J$ reduces to
\[
J(u)= \sum_{{i=1,\dots,n}\atop{j=1,\dots,m }}
\frac{1}{2p_j} [u]^{p_j}_{s_i, p_j} -\int_\Omega g(x)u (x)\,dx.
\]
Since~$J$ is strictly convex, we see that
the global minimum is the only critical point of~$J$, concluding the proof of Theorem~\ref{Weierstrass}.
\end{proof}

\subsection{Proof of the Theorem~\ref{MPT}}
We now focus on giving the proof of Theorem~\ref{MPT}.
We recall that, in this setting,
\begin{equation}\label{musommafinitakappa}
\mu^+(s,p)= \sum_{k=1}^m \delta_{(s_k, p_k)},
\end{equation}
namely~$\mu^+$ is a particular case of the measure defined in~\eqref{mu=sommadelta}. With this choice,  the definition in~\eqref{definizionenorma} boils down to
\begin{equation}\label{nnormkappa}
\|u\|_\mu =\sum_{k=1}^m [u]_{s_k, p_k}
\end{equation}
and the operator defined in~\eqref{elle} becomes
\begin{equation}\label{ellefinito}
L_\mu \, u:= \sum_{k=1}^m (-\Delta)^{s_k}_{p_k} u.
\end{equation}

Our goal is to show the existence of a Mountain Pass solution for the problem~\eqref{senzamumeno}, under
the assumptions that~$f:\Omega\times\R\to\R$ is a Carth\'eodory function satisfying~\eqref{AR1}, \eqref{AR2}, \eqref{AR3} and~\eqref{AR4}.

With this aim, we provide the definition of weak solutions for the problem~\eqref{senzamumeno}.

\begin{definition}\label{defnws}
We say that~$u\in\mathcal{X}_\mu (\Omega)$ is a weak solution of~\eqref{senzamumeno} if, for any~$v\in\mathcal{X}_\mu (\Omega)$,
\[
\begin{split}
&\sum_{k=1}^m \frac{c_{N, s_k, p_k}}{2} \iint_{\R^{2N}} \frac{|u(x)-u(y)|^{p_k-2} (u(x)-u(y))(v(x)-v(y))}{|x-y|^{N+s_k p_k}}\, dx\, dy \\
&\qquad\qquad\qquad = \int_\Omega f(x, u(x)) v(x)\, dx.
\end{split}
\]
\end{definition}

Also, let~$F$ be defined as in~\eqref{definizioneF} and~$I:\mathcal{X}_\mu (\Omega)\to\R$ be the functional defined as
\begin{equation}\label{functionaln}
\begin{split}
I(u):= \sum_{k=1}^m \frac{1}{2p_k} [u]^{p_k}_{s_k, p_k} -\int_\Omega F(x, u(x)) \,dx.
\end{split}
\end{equation}

Bu arguing as in the proof of Proposition~\ref{Jbendefinito}, we
obtain the following statement:

\begin{proposition}
We have that, for all~$u$, $v\in\mathcal{X}_\mu (\Omega)$,
\begin{equation}\label{derfini7952qlwhfqor3u2t8y}\begin{split}
\langle I'(u), v\rangle =&
\sum_{k=1}^m\frac{c_{N, s_k, p_k}}{2} \iint_{\R^{2N}} \frac{|u(x)-u(y)|^{p_k-2} (u(x)-u(y))(v(x)-v(y))}{|x-y|^{N+s_k p_k}}\, dx\, dy\\
&\qquad\qquad-\int_\Omega  f(x, u(x)) v(x)\,dx.\end{split}
\end{equation}

Furthermore, critical points of~$I$ are weak solutions of the problem~\eqref{senzamumeno}.
\end{proposition}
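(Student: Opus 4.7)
The proof will parallel that of Proposition~\ref{Jbendefinito}, with the sum over the pair of indices $(i,j)$ replaced by the single sum over $k$, with no negative-measure contribution to deal with, and with the linear source $g(x)$ replaced by the nonlinear term $f(x,u(x))$. My plan is to compute $\langle I'(u),v\rangle$ directly as the Gateaux derivative of $I$ by expanding $I(u+\varepsilon v)$ to first order in $\varepsilon$, passing to the limit $\varepsilon\to 0$, and then comparing the resulting expression with Definition~\ref{defnws} to read off the statement about critical points.

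For the seminorm part $\sum_{k=1}^m \frac{1}{2p_k}[u+\varepsilon v]_{s_k,p_k}^{p_k}$, I would differentiate term by term. When $s_k\in(0,1)$ the $k$-th summand is the double integral of $c_{N,s_k,p_k}|(u+\varepsilon v)(x)-(u+\varepsilon v)(y)|^{p_k}/(2p_k|x-y|^{N+s_kp_k})$, and the derivative at $\varepsilon=0$ produces exactly the $(s_k,p_k)$-contribution appearing on the right-hand side of~\eqref{derfini7952qlwhfqor3u2t8y}. The cases $s_k=0$ and $s_k=1$ yield, under the abuse of notation adopted earlier in the paper, the expressions $\int_\Omega|u|^{p_k-2}uv\,dx$ and $\int_\Omega|\nabla u|^{p_k-2}\nabla u\cdot\nabla v\,dx$. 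Dominated convergence to justify passing the derivative inside the integral is available since the pointwise expansion $|a+\varepsilon b|^{p_k}=|a|^{p_k}+\varepsilon p_k|a|^{p_k-2}ab+o(\varepsilon)$ can be integrated against $|x-y|^{-(N+s_kp_k)}$ using that $u,v\in\mathcal{X}_\mu(\Omega)$ controls all the seminorms $[\,\cdot\,]_{s_k,p_k}$ by construction~\eqref{nnormkappa}.

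For the nonlinear term $\int_\Omega F(x,u(x))\,dx$, the mean value theorem gives
\[
\frac{F(x,u+\varepsilon v)-F(x,u)}{\varepsilon}=f\big(x,u(x)+\theta_\varepsilon(x)\,\varepsilon v(x)\big)\,v(x)
\]
for some $\theta_\varepsilon(x)\in(0,1)$, and the subcritical growth~\eqref{AR1} dominates this difference quotient by $(a_1+a_2|u+\varepsilon v|^{q-1})|v|$ with $q\in(\widehat p,(p_\sharp)^*_{s_\sharp})$. The continuous embedding $\mathcal{X}_\mu(\Omega)\hookrightarrow L^q(\Omega)$ provided by Proposition~\ref{propembedding} ensures that $|u|,|v|\in L^q(\Omega)$, hence H\"older's inequality with exponents $q/(q-1)$ and $q$ gives a uniform integrable majorant for $\varepsilon\in[-1,1]$. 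Lebesgue dominated convergence then yields $\int_\Omega f(x,u(x))v(x)\,dx$ as the limit.

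Combining the two computations produces the identity~\eqref{derfini7952qlwhfqor3u2t8y}. The assertion that critical points of $I$ are weak solutions of~\eqref{senzamumeno} is then immediate: imposing $\langle I'(u),v\rangle=0$ for every $v\in\mathcal{X}_\mu(\Omega)$ reproduces word for word the weak formulation in Definition~\ref{defnws}. The only genuine technical ingredient is the uniform integrable dominant used to differentiate the nonlinear term, which is precisely what the subcritical growth assumption~\eqref{AR1} together with the embedding from Proposition~\ref{propembedding} are designed to deliver; the differentiation of the seminorm part is routine because the space $\mathcal{X}_\mu(\Omega)$ is tailored to make each $[\,\cdot\,]_{s_k,p_k}$ finite.
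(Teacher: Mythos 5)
Your proposal is correct and follows the same strategy the paper takes: the paper dispenses with a written proof entirely, pointing to Proposition~\ref{Jbendefinito} via the phrase ``by arguing as in the proof of Proposition~\ref{Jbendefinito}'', which is precisely the formal first-order expansion of the energy followed by passage to the $\varepsilon\to 0$ limit and a comparison with Definition~\ref{defnws}. You supply more detail than the paper does, in particular the dominated-convergence justification for differentiating the nonlinear term via the subcritical growth~\eqref{AR1}, the embedding $\mathcal{X}_\mu(\Omega)\hookrightarrow L^q(\Omega)$, and the H\"older estimate with exponents $q/(q-1)$ and $q$; this is a genuine improvement in rigor rather than a different route, since the paper only writes out the formal Taylor expansion in Proposition~\ref{Jbendefinito} without spelling out why the $o(\varepsilon)$ remainder can be controlled. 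The only small caution: in the last display of the paper's proof of Proposition~\ref{Jbendefinito} there is a sign discrepancy on the $\int g v$ term between the expansion and the limit (the expansion shows $+\varepsilon\int g v$, but the functional carries a minus sign, so it should read $-\varepsilon\int g v$); your version of the argument correctly produces the minus sign in front of $\int_\Omega f(x,u)v\,dx$.
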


We also recall the following result stated in~\cite[Lemma~3]{SV12}:

\begin{lemma}\label{SV12}
Let~$f$ satisfy~\eqref{AR1} and~\eqref{AR2}. Then, for any~$\varepsilon>0$
there exists~$\delta=\delta(\varepsilon)>0$ such that,
for a.e.~$x\in \Omega$ and for any~$t\in \R$,
\begin{equation}\label{F1}\begin{split}
&|f(x,t)|\leq \widehat{p}\varepsilon|t|^{\widehat{p}-1}+q\delta(\varepsilon)|t|^{q-1}
\\
{\mbox{and }}\qquad&
|F(x,t)|\leq \varepsilon|t|^{\widehat{p}}+\delta(\varepsilon)|t|^q,
\end{split}\end{equation}
where~$F$ is defined in~\eqref{definizioneF}.
\end{lemma}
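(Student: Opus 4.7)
The plan is to split the argument according to whether $|t|$ is small or large, using \eqref{AR2} near zero and \eqref{AR1} away from zero, and then obtain the bound on $F$ by integration. This is a standard splitting argument in variational problems under Ambrosetti--Rabinowitz-type conditions; the main technical point is to verify that a single constant $\delta(\varepsilon)$ works uniformly in $x$ and on both regions of $t$.

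First, I would fix $\varepsilon>0$ and exploit \eqref{AR2}, which asserts that $f(x,t)/(|t|^{\widehat p-2}t)\to 0$ uniformly for a.e.\ $x\in\Omega$ as $t\to 0$. This yields a threshold $\rho=\rho(\varepsilon)>0$ such that
\[
|f(x,t)|\leq \widehat p\,\varepsilon\,|t|^{\widehat p-1}\qquad \mbox{for a.e.~}x\in\Omega \mbox{ and every } t \mbox{ with } |t|\leq \rho.
\]
For $|t|>\rho$, I would use \eqref{AR1} together with the elementary inequality $1\leq (|t|/\rho)^{q-1}$, which holds since $q>\widehat p>1$ and $|t|>\rho$, to get
\[
|f(x,t)|\leq a_1+a_2|t|^{q-1}\leq (a_1\rho^{1-q}+a_2)|t|^{q-1}.
\]
Setting
\[
\delta(\varepsilon):=\frac{a_1\rho(\varepsilon)^{1-q}+a_2}{q},
\]
I would combine the two regimes: on $\{|t|\leq \rho\}$ only the first term of the claimed bound is needed, while on $\{|t|>\rho\}$ only the second term is needed, so in both cases
\[
|f(x,t)|\leq \widehat p\,\varepsilon\,|t|^{\widehat p-1}+q\,\delta(\varepsilon)\,|t|^{q-1},
\]
which establishes the first inequality in \eqref{F1}.

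For the second inequality, I would simply integrate the pointwise bound just obtained. Using \eqref{definizioneF} together with the triangle inequality for integrals, for a.e.\ $x\in\Omega$ and every $t\in\R$,
\[
|F(x,t)|\leq \int_0^{|t|}|f(x,\tau)|\,d\tau
\leq \int_0^{|t|}\bigl(\widehat p\,\varepsilon\,\tau^{\widehat p-1}+q\,\delta(\varepsilon)\,\tau^{q-1}\bigr)\,d\tau
=\varepsilon\,|t|^{\widehat p}+\delta(\varepsilon)\,|t|^{q},
\]
which gives the second bound in \eqref{F1} with the same constant $\delta(\varepsilon)$.

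No step here is subtle: the only thing to monitor is that the threshold $\rho$ supplied by \eqref{AR2} can indeed be chosen independently of $x$, which is guaranteed by the uniformity clause in \eqref{AR2}, and that the constant $a_1\rho^{1-q}+a_2$ absorbs the low-order term $a_1$ from \eqref{AR1} into a pure power $|t|^{q-1}$. The full-line bound is then a clean consequence of a two-region estimate, and the integral bound follows automatically by the fundamental theorem of calculus applied to $F$.
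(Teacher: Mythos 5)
The paper does not prove this lemma itself; it cites it directly from \cite{SV12} (Lemma~3 there), and your argument is precisely the standard two-region splitting argument used in that reference. Your proof is correct: you invoke the uniformity in \eqref{AR2} to get a threshold $\rho$ independent of $x$, absorb the constant $a_1$ from \eqref{AR1} into $|t|^{q-1}$ on $\{|t|>\rho\}$ via $1\leq(|t|/\rho)^{q-1}$, and then integrate the pointwise bound to control $F$, so nothing further is needed.
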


We now prove that the geometry of the Mountain Pass Theorem is satisfied.

\begin{proposition}\label{geo1}
Let~$f$ satisfy~\eqref{AR1} and~\eqref{AR2}.

Then, there exist~$\rho$, $\beta>0$ such that, for any~$u\in \mathcal X_\mu(\Omega)$ with~$\|u\|_\mu=\rho$,  we have that~$I(u)\geq \beta$.
\end{proposition}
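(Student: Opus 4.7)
The plan is to obtain a lower bound of the form $I(u) \geq c_1 \|u\|_\mu^{\widehat{p}} - c_2 \|u\|_\mu^q$ valid whenever $\|u\|_\mu \leq 1$, and then to choose $\rho > 0$ small enough to exploit the fact that $q > \widehat{p}$.

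First, I would apply Lemma~\ref{SV12} with some $\varepsilon > 0$ to be chosen later, obtaining
\[
\int_\Omega F(x,u(x))\,dx \leq \varepsilon \|u\|_{L^{\widehat{p}}(\Omega)}^{\widehat{p}} + \delta(\varepsilon)\|u\|_{L^q(\Omega)}^q.
\]
Since $q \in (\widehat{p},(p_\sharp)^*_{s_\sharp})$ and $\widehat{p} < (p_\sharp)^*_{s_\sharp}$, Proposition~\ref{propembedding} together with the standard embedding $L^{(p_\sharp)^*_{s_\sharp}}(\Omega) \hookrightarrow L^r(\Omega)$ for $r \leq (p_\sharp)^*_{s_\sharp}$ on the bounded domain $\Omega$ yields constants $C_1, C_2 > 0$ with $\|u\|_{L^{\widehat{p}}(\Omega)} \leq C_1 \|u\|_\mu$ and $\|u\|_{L^q(\Omega)} \leq C_2 \|u\|_\mu$, hence
\[
\int_\Omega F(x,u(x))\,dx \leq C_1^{\widehat{p}}\varepsilon \|u\|_\mu^{\widehat{p}} + C_2^q\delta(\varepsilon)\|u\|_\mu^q.
\]

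The more delicate step is bounding the kinetic part $\sum_{k=1}^m \frac{1}{2p_k}[u]^{p_k}_{s_k,p_k}$ from below in terms of $\|u\|_\mu^{\widehat{p}}$. Restricting to $\|u\|_\mu \leq 1$, each seminorm satisfies $[u]_{s_k,p_k} \leq \|u\|_\mu \leq 1$, and since $p_k \leq \widehat{p}$, we have $[u]^{p_k}_{s_k,p_k} \geq [u]^{\widehat{p}}_{s_k,p_k}$. Combining this with the elementary inequality~\eqref{elementaryinequality} applied to $x_k := [u]_{s_k,p_k}$,
\[
\sum_{k=1}^m [u]^{\widehat{p}}_{s_k,p_k} \geq \frac{1}{m^{\widehat{p}-1}}\left(\sum_{k=1}^m [u]_{s_k,p_k}\right)^{\widehat{p}} = \frac{\|u\|_\mu^{\widehat{p}}}{m^{\widehat{p}-1}}.
\]
Together with $1/(2p_k) \geq 1/(2\widehat{p})$ this gives
\[
\sum_{k=1}^m \frac{1}{2p_k}[u]^{p_k}_{s_k,p_k} \geq \frac{1}{2\widehat{p}\,m^{\widehat{p}-1}}\|u\|_\mu^{\widehat{p}}.
\]

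Putting the two estimates together, whenever $\|u\|_\mu \leq 1$,
\[
I(u) \geq \left(\frac{1}{2\widehat{p}\,m^{\widehat{p}-1}} - C_1^{\widehat{p}}\varepsilon\right)\|u\|_\mu^{\widehat{p}} - C_2^q\delta(\varepsilon)\|u\|_\mu^q.
\]
Choosing $\varepsilon>0$ so small that $C_1^{\widehat{p}}\varepsilon \leq \tfrac{1}{4\widehat{p}\,m^{\widehat{p}-1}}$, and then fixing $\rho \in (0,1]$ small enough so that $C_2^q\delta(\varepsilon)\rho^{q-\widehat{p}} \leq \tfrac{1}{8\widehat{p}\,m^{\widehat{p}-1}}$, we obtain that for every $u \in \mathcal{X}_\mu(\Omega)$ with $\|u\|_\mu = \rho$,
\[
I(u) \geq \frac{\rho^{\widehat{p}}}{8\widehat{p}\,m^{\widehat{p}-1}} =: \beta > 0,
\]
which is the claim. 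The only mildly nontrivial ingredient is the lower bound on the kinetic part: one cannot simply compare $\sum_k [u]^{p_k}_{s_k,p_k}$ to $\|u\|_\mu^{\widehat{p}}$ because the exponents differ; the smallness assumption $\|u\|_\mu \leq 1$ is precisely what lets us homogenize them and apply~\eqref{elementaryinequality}.
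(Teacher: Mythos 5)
Your proof is correct and follows essentially the same approach as the paper's: apply Lemma~\ref{SV12} with an $\varepsilon$ to be fixed later, convert the $L^{\widehat p}$ and $L^q$ norms to $\|\cdot\|_\mu$ via the embeddings, restrict to $\|u\|_\mu\le 1$ so that $[u]_{s_k,p_k}^{p_k}\ge [u]_{s_k,p_k}^{\widehat p}$, and then invoke~\eqref{elementaryinequality} to compare $\sum_k[u]_{s_k,p_k}^{\widehat p}$ with $\|u\|_\mu^{\widehat p}$ before choosing $\varepsilon$ and $\rho$ small. The decomposition, the key inequality, and the order of the constant choices all match the paper's argument.
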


\begin{proof}
By~\eqref{musommafinitakappa} and Remark~\ref{utile}, we have that~$\mathcal X_\mu(\Omega)$ is compactly embedded in~$L^r(\Omega)$ for any~$r\in [1, (p_\sharp)^*_{s_\sharp})$, and therefore it is compactly embedded in~$L^{\widehat p}(\Omega)$
and in~$L^q(\Omega)$
(recall that~$q\in (\widehat{p}, (p_\sharp)^*_{s_\sharp})$, in light
of the assumption in~\eqref{AR1}).

Hence, by means of~\eqref{functionaln}, \eqref{F1} and~\eqref{nnormkappa}, we infer that, for any~$\varepsilon>0$,
there exists~$\delta(\varepsilon)>0$ such that, for all~$u\in\mathcal{X}_\mu (\Omega)$,
\begin{eqnarray*}
I(u) &\ge& \sum_{k=1}^m \frac{1}{2p_k} [u]^{p_k}_{s_k, p_k} -\varepsilon\|u\|^{\widehat p}_{L^{\widehat p}(\Omega)} - \delta(\varepsilon)\|u\|^q_{L^q(\Omega)}\\
&\ge& \frac{1}{2\widehat p} \sum_{k=1}^m [u]^{p_k}_{s_k, p_k} -C\varepsilon\|u\|^{\widehat p}_{\mu} - C\delta(\varepsilon)\|u\|^q_{\mu}
,
\end{eqnarray*}
for some~$C>0$.
 
{F}rom this, and using also~\eqref{elementaryinequality},
we see that, if~$\|u\|_\mu\le 1$, 
\[
\begin{split}
I(u)&\ge\frac{1}{2\widehat p}\sum_{k=1}^m  [u]^{\widehat p}_{s_k, p_k}-C\varepsilon\|u\|^{\widehat p}_{\mu} - C\delta(\varepsilon)\|u\|^q_{\mu}\\
&\ge \frac{1}{2 \widehat p\, m^{\widehat p -1}}
\left(\sum_{k=1}^m  [u]_{s_k, p_k}\right)^{\widehat p} - 
C\varepsilon \|u\|_\mu^{\widehat p}-C\delta(\varepsilon)\|u\|^q_{\mu} \\
&=\|u\|^{\widehat p}_\mu \left(\frac{1}{2\widehat p \,m^{\widehat p -1}} - C\varepsilon-C\delta(\varepsilon) \|u\|^{q-\widehat p}_\mu\right).
\end{split}
\]
Thus, choosing
$$ \varepsilon:=\frac{1}{4C\widehat p \,m^{\widehat p -1}},
$$
we obtain that
$$ I(u)\ge \|u\|^{\widehat p}_\mu \left(\frac{1}{4\widehat p \,m^{\widehat p -1}} - C\delta \|u\|^{q-\widehat p}_\mu\right),$$
where now~$\delta$ depends on the structural constants of the problem.

Now we take
$$ \rho:=\min\left\{ \frac{1}{8C\delta\widehat p \,m^{\widehat p -1}},1\right\}.
$$
In this way, if~$\|u\|_\mu=\rho$,
$$ I(u)\ge \rho^{\widehat p}\frac{1}{8\widehat p \,m^{\widehat p -1}}=:\beta >0,$$
as desired.
\end{proof}

\begin{proposition}\label{geo2}
Let~$f$ satisfy~\eqref{AR1}, \eqref{AR2} and~\eqref{AR4}.

Then, there exists~$e\in \mathcal X_\mu(\Omega)$ such that~$e\geq 0$ a.e. in~$\R^N$, $\|e\|_\mu>\rho$ and~$I(e)<\beta$, where~$\rho$ and~$\beta$ are given in Proposition~\ref{geo1}.
\end{proposition}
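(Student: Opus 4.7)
The plan is to construct $e$ as a sufficiently large positive multiple of a fixed nonnegative bump function, exploiting the superlinear lower bound on $F$ given by~\eqref{AR4} in order to overwhelm the kinetic part of the energy.

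Concretely, I would fix $u_0 \in C_0^\infty(\Omega)$ with $u_0 \geq 0$ and $u_0 \not\equiv 0$. Since $\mu^+$ is a finite sum of Dirac masses (see~\eqref{musommafinitakappa}), each Gagliardo seminorm $[u_0]_{s_k, p_k}$ is finite, so $u_0 \in \mathcal{X}_\mu(\Omega)$ with $\|u_0\|_\mu$ finite and strictly positive. For $t > 0$, using the $p_k$-homogeneity of $[\,\cdot\,]_{s_k, p_k}^{p_k}$ and~\eqref{AR4}, I would estimate
\[
I(tu_0) \leq \sum_{k=1}^m \frac{t^{p_k}}{2p_k}\,[u_0]_{s_k, p_k}^{p_k} - a_3\,t^{\widetilde\vartheta}\int_\Omega u_0^{\widetilde\vartheta}\,dx + \|a_4\|_{L^1(\Omega)}.
\]

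The decisive observation is that the exponent $\widetilde\vartheta$ in~\eqref{AR4} satisfies $\widetilde\vartheta > \widehat p \geq p_k$ for every $k \in \{1,\dots,m\}$, so that the negative term of order $t^{\widetilde\vartheta}$ dominates the finitely many positive terms of orders $t^{p_k}$ as $t \to +\infty$. Since $\int_\Omega u_0^{\widetilde\vartheta}\,dx > 0$, this yields $I(tu_0) \to -\infty$ when $t \to +\infty$, while at the same time $\|t u_0\|_\mu = t\|u_0\|_\mu \to +\infty$.

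Therefore I would pick $t^\star > 0$ large enough so that simultaneously $t^\star\|u_0\|_\mu > \rho$ and $I(t^\star u_0) < \beta$, and set $e := t^\star u_0$. The condition $e \geq 0$ a.e. in $\R^N$ is automatic from the choice of $u_0$. No substantial obstacle is expected: the argument is a routine verification that the Mountain Pass geometry is compatible with the finite-sum structure of our superposition operator, the key input being that~\eqref{AR4} places $\widetilde\vartheta$ strictly above the largest exponent $\widehat p$ appearing in the kinetic part of $I$.
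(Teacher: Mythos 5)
Your proposal is correct and follows essentially the same route as the paper: fix a nonnegative bump $u_0 \in C_0^\infty(\Omega)$, use assumption~\eqref{AR4} together with the $p_k$-homogeneity of each $[\,\cdot\,]_{s_k,p_k}^{p_k}$ to show $I(tu_0)\to -\infty$ while $\|tu_0\|_\mu\to+\infty$ as $t\to+\infty$, then pick $t^\star$ large. The paper normalizes slightly differently (it takes $\varphi=1$ on a ball and sets $e=\bar t\varphi/\|\varphi\|_\mu$, and for $t\ge1$ replaces $t^{p_k}$ by $t^{\widehat p}$), but the decisive point is identical in both proofs: $\widetilde\vartheta > \widehat p \ge p_k$ makes the $t^{\widetilde\vartheta}$ term dominate.
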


\begin{proof}
Let~$t\ge1$ and let~$\varphi\in C^\infty_0(\Omega)$ with~$\varphi\ge0$ in~$\R^N$ and~$\varphi=1$ in some ball~$B\Subset\Omega$.

By~\eqref{functionaln}, \eqref{nnormkappa} and~\eqref{AR4}, we infer that
\[
\begin{split}
I(t\varphi)&= \sum_{k=1}^n \frac{t^{p_k}}{2p_k} [\varphi]^{p_k}_{s_k, p_k} -\int_\Omega F(x, t\varphi(x)) \,dx\\
&\le \sum_{k=1}^m \frac{t^{p_k}}{2p_k} [\varphi]^{p_k}_{s_k, p_k}-a_3t^{\widetilde{\vartheta}}\|\varphi\|_{L^{\widetilde{\vartheta}}(\Omega)}^{\widetilde{\vartheta}} +\int_\Omega a_4(x)\,dx\\
&\le\sum_{k=1}^m \frac{t^{\widehat p}}{2p_k} [\varphi]^{p_k}_{s_k, p_k}-a_3t^{\widetilde{\vartheta}}\|\varphi\|_{L^{\widetilde{\vartheta}}(\Omega)}^{\widetilde{\vartheta}} +\int_\Omega a_4(x)\,dx.
\end{split}
\]

Let also
$$ \bar t:=\max\left\{\rho,\left(\frac{\sum_{k=1}^m \frac{1}{p_k} [\varphi]^{p_k}_{s_k, p_k}}{
a_3\|\varphi\|_{L^{\widetilde{\vartheta}}(\Omega)}^{\widetilde{\vartheta}}
}\right)^{\frac1{\widetilde\vartheta-\widehat p}},\left(\frac{
\int_\Omega a_4(x)\,dx}{
a_3\|\varphi\|_{L^{\widetilde{\vartheta}}(\Omega)}^{\widetilde{\vartheta}}
}
\right)^{\frac1{\widetilde\vartheta}} \right\}+1.
$$
In this way, since~$\widetilde{\vartheta}>\widehat p$,
taking~$e:=\frac{\bar t\varphi}{\|\varphi\|_\mu}$, we see that
$$ \|e\|_\mu>\rho\qquad {\mbox{and}}\qquad I(e)<\beta,$$
as desired.\end{proof}

Now, we want to prove that the Palais-Smale condition holds at any level. We start by proving that every Palais-Smale sequence is bounded.

\begin{proposition}\label{PropPS1}
Let~$f$ satisfy~\eqref{AR1}, \eqref{AR2} and~\eqref{AR3}. Let~$c\in\R$ and~$u_n$ be a sequence in~$\mathcal{X}_\mu(\Omega)$ such that
\begin{equation}\label{bdd1}
\lim_{n\to +\infty} I(u_n)=c
\end{equation}
and
\begin{equation}\label{bdd2}
\lim_{n\to +\infty} \sup_{\substack{v\in \mathcal{X}_{ \mu}(\Omega)\\ \|v\|_{ \mu}=1}} |\langle I'(u_n), v\rangle| =0.
\end{equation}

Then, $u_n$ is bounded in~$\mathcal{X}_{\mu}(\Omega)$.
\end{proposition}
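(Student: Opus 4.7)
The plan is to follow the classical Ambrosetti--Rabinowitz argument, adapted to our multi-exponent setting. The natural test is to form the combination
$\vartheta I(u_n) - \langle I'(u_n), u_n\rangle$. Using the expressions for $I$ in~\eqref{functionaln} and for $\langle I'(u), v\rangle$ in~\eqref{derfini7952qlwhfqor3u2t8y} evaluated at $v=u$, the seminorm contributions collapse to
\[
\sum_{k=1}^m \left(\frac{\vartheta}{2p_k}-\frac12\right)[u_n]^{p_k}_{s_k,p_k},
\]
and, because $\vartheta>\widehat p\geq p_k$ for every $k$, each coefficient is strictly positive, so one can bound them from below by a uniform constant $c_{\min}>0$ times $\sum_k[u_n]^{p_k}_{s_k,p_k}$.

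Next, the nonlinear part contributes $\int_\Omega\bigl(f(x,u_n)u_n-\vartheta F(x,u_n)\bigr)\,dx$. I would split this integral on $\{|u_n|\leq r\}\cup\{|u_n|>r\}$: by~\eqref{AR3} the integrand is nonnegative on the latter set, while by~\eqref{AR1} both $|f(x,t)t|$ and $|\vartheta F(x,t)|$ are uniformly bounded on $\Omega\times[-r,r]$, so the contribution over $\{|u_n|\leq r\}$ is bounded by a constant $C=C(r,a_1,a_2,q,\vartheta,|\Omega|)$. Consequently
\[
c_{\min}\sum_{k=1}^m[u_n]^{p_k}_{s_k,p_k}\leq \vartheta I(u_n)-\langle I'(u_n),u_n\rangle+C.
\]
The hypotheses~\eqref{bdd1} and~\eqref{bdd2} allow me to bound the right hand side by $C_1+\varepsilon_n\|u_n\|_\mu$, with $\varepsilon_n\to0$ (via $|\langle I'(u_n),u_n\rangle|\leq\varepsilon_n\|u_n\|_\mu$).

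The final step, and the real subtlety in this multi-exponent framework, is to pass from a bound on $\sum_k[u_n]^{p_k}_{s_k,p_k}$ to a bound on $\|u_n\|_\mu=\sum_k[u_n]_{s_k,p_k}$, since the two powers do not match. Arguing by contradiction, suppose $\|u_n\|_\mu\to+\infty$ along a subsequence; then $M_n:=\max_k[u_n]_{s_k,p_k}\to+\infty$ and in particular $M_n\geq 1$ eventually. Setting $p_{\min}:=\min_{k}p_k>1$ and picking an index $k_n^\star$ realizing the maximum, one gets
\[
\sum_{k=1}^m[u_n]^{p_k}_{s_k,p_k}\ge [u_n]^{p_{k_n^\star}}_{s_{k_n^\star},p_{k_n^\star}}\ge M_n^{p_{\min}},
\]
exactly in the spirit of the elementary inequality~\eqref{elementaryinequality} exploited in Lemma~\ref{u3itrfgekauqgo3wutiy348o87654BIS}. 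Combining with $\|u_n\|_\mu\leq mM_n$, the previous estimate yields
\[
c_{\min}M_n^{p_{\min}}\le C_1+\varepsilon_n m M_n,
\]
which, since $p_{\min}>1$ and $\varepsilon_n\to 0$, forces $M_n$ to remain bounded, a contradiction.

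The principal obstacle is the aforementioned mismatch between the homogeneity of the norm $\|\cdot\|_\mu$ (linear in each seminorm) and of the energy (with power $p_k$ on the $k$-th seminorm); once this is resolved through the $M_n$-argument above, the rest is routine bookkeeping of the Ambrosetti--Rabinowitz mechanism. No uniform convexity or reflexivity of $\mathcal X_\mu(\Omega)$ is needed at this stage, only the compact embedding in $L^q(\Omega)$ (Corollary~\ref{corembedding} and Remark~\ref{utile}) to justify that the $L^q$--type terms extracted from~\eqref{AR1} and~\eqref{AR3} are controlled by the ambient norm.
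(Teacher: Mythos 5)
Your proof is correct and takes essentially the same route as the paper: you form the Ambrosetti--Rabinowitz combination $\vartheta I(u_n)-\langle I'(u_n),u_n\rangle$, isolate the seminorm terms with positive coefficients because $\vartheta>\widehat p\geq p_k$, control the nonlinear remainder on $\{|u_n|\le r\}$ by a constant (you do this directly from~\eqref{AR1}, the paper invokes Lemma~\ref{SV12}; both give the same uniform bound), and then derive a contradiction from the homogeneity mismatch. Your final step with $M_n:=\max_k[u_n]_{s_k,p_k}$ and the inequality $c_{\min}M_n^{p_{\min}}\le C_1+\varepsilon_n mM_n$ is a slightly streamlined version of the paper's argument using the set $K$ of eventually-large seminorms and~\eqref{elementaryinequality}, but the mechanism is identical. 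One minor remark: the closing appeal to the compact embedding into $L^q(\Omega)$ is not actually needed anywhere in this proof — the $\{|u_n|\le r\}$ contribution is controlled pointwise, and nothing else requires an $L^q$ estimate.
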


\begin{proof}
By~\eqref{bdd1} and~\eqref{bdd2}, there exists~$M>0$ such that, for any~$n\in\N$,
\begin{equation*}
|I(u_n)|\le M\qquad\mbox{and}\qquad \left|\left\langle I'(u_n), \frac{u_n}{\|u_n\|_{ \mu}}\right\rangle\right|\le M.
\end{equation*}
Thus, if~$\vartheta$ is as in~\eqref{AR3}, we have that
\begin{equation}\label{c11}
\vartheta I(u_n) - \langle I'(u_n), u_n\rangle \le M(\vartheta +\|u_n\|_{ \mu}).
\end{equation}

Moreover, we take~$r$ as in assumption~\eqref{AR3} and notice that there are two possible cases. On the one hand, if~$r=0$, then
\begin{equation}\label{cnaoj}
\int_\Omega \Big(f(x, u_n(x)) u_n(x) -\vartheta F(x, u_n(x))\Big)\,dx >0.
\end{equation}
On the other hand, if~$r>0$, then, by using Lemma~\ref{SV12} with~$\varepsilon:=1$, we find that
\begin{equation}\label{ler}
\begin{split}&
\left|\;\int_{\Omega\cap\{|u_n|\le r\}}  \Big(f(x, u_n(x)) u_n(x) 
-\vartheta F(x, u_n(x))\Big) \,dx\right|\\
&\qquad\qquad\le \Big(\widehat pr^{\widehat p} +q\delta(1) r^q 
+\vartheta r^{\widehat p} +\vartheta\delta(1) r^q\Big)|\Omega|=:C_r.
\end{split}
\end{equation}

Now, by~\eqref{functionaln}, \eqref{derfini7952qlwhfqor3u2t8y} and 
either~\eqref{cnaoj} if~$r=0$ or~\eqref{ler} if~$r>0$, we see that
\begin{equation*}
\begin{split}&
\vartheta I(u_n) - \langle I'(u_n), u_n\rangle \\
=\;&\sum_{k=1}^m\frac{1}{2}\left(\frac{\vartheta}{p_k} -1\right)[u_n]_{s_k,p_k}^{p_k}+\int_\Omega \Big(f(x, u_n(x))u_n(x)
-\vartheta F(x, u_n(x))\Big) \,dx
\\
\ge\;& \sum_{k=1}^m\frac{1}{2}\left(\frac{\vartheta}{p_k} -1\right)[u_n]_{s_k,p_k}^{p_k}+\int_{\Omega\cap\{|u_n|\le r\}}  \Big(f(x, u_n(x)) u_n(x) -\vartheta F(x, u_n(x))\Big) \,dx\\
\geq\;&\sum_{k=1}^m \frac{1}{2} \left(\frac{\vartheta}{p_k} -1\right)[u_n]_{s_k,p_k}^{p_k}
-C_r  \\
\geq\; &\frac{1}{2}\left(\frac{\vartheta}{\widehat{p}} -1\right) \sum_{k=1}^m[u_n]_{s_k,p_k}^{p_k}
-C_r.
\end{split}
\end{equation*}
Combining this with~\eqref{c11}, we obtain that
\begin{equation}\label{sperolimitata}
\sum_{k=1}^m[u_n]_{s_k,p_k}^{p_k}
\leq \frac{2\widehat{p}M(\vartheta +\|u_n\|_{ \mu})+2\widehat{p}C_r}{(\vartheta-\widehat{p})}.
\end{equation}

We claim that~\eqref{sperolimitata} implies that~$u_n$ is bounded 
in~$\mathcal{X}_\mu(\Omega)$.
In order to check this, assume by contradiction that 
\[
\lim_{n\to+\infty}\|u_n\|_\mu =+\infty.
\]
Then, defining the set
\[
K:=\Big\{k\in\{1,\cdots,m\}\; \mbox{s.t. there exists~$n_0$
s.t. }\; [u_n]_{s_k,p_k}\geq 1 \mbox{ for every } n\geq n_0
\Big\},
\]
we have that
\begin{equation}\label{sommaKdiverge}
\lim_{n\to+\infty}\sum_{k\in K} [u_n]_{s_k,p_k}=+\infty.
\end{equation}

Moreover, using~\eqref{elementaryinequality} and denoting by~$|K|$ the cardinality of the set~$K$ and 
\[
p_{min}:=\min_{k\in K}\{p_k\},
\]we see that
\begin{equation}\label{sommaKdiverge1}
\sum_{k=1}^m[u_n]_{s_k,p_k}^{p_k}
\ge \sum_{k\in K}[u_n]_{s_k,p_k}^{p_k}
\ge \sum_{k\in K}[u_n]_{s_k,p_k}^{p_{min}}
\ge \frac{1}{|K|^{p_{min}-1}}
\left(\sum_{k\in K}[u_n]_{s_k,p_k}\right)^{p_{min}}.
\end{equation}
Also,
\begin{equation}\label{sommaKdiverge2}
\|u_n\|_\mu=\sum_{k=1}^m[u_n]_{s_k,p_k}
\leq \sum_{k\in K}[u_n]_{s_k,p_k}+m-|K|.
\end{equation}

As a consequence, combining~\eqref{sommaKdiverge},
\eqref{sommaKdiverge1} and~\eqref{sommaKdiverge2},
\[
\lim_{n\to+\infty}\frac{1}{\|u_n\|_\mu}
\sum_{k=1}^m[u_n]_{s_k,p_k}^{p_k}
\geq \frac{1}{|K|^{p_{min}-1}} \lim_{n\to+\infty}
\dfrac{\displaystyle \left(\sum_{k\in K}[u_n]_{s_k,p_k}\right)^{p_{min}}}{\displaystyle \sum_{k\in K}[u_n]_{s_k,p_k}+m-|K|}=+\infty,
\]
which is in contradiction with~\eqref{sperolimitata}.
\end{proof}

We now prove that every bounded Palais-Smale sequence converges strongly in~$\mathcal{X}_\mu(\Omega)$.

\begin{proposition}\label{PropPS2}
Let~$f$ satisfy~\eqref{AR1}. Let~$u_n$ be a bounded sequence in~$\mathcal{X}_{ \mu}(\Omega)$ such that~\eqref{bdd2} holds true.

Then, there exists~$u\in\mathcal{X}_{ \mu}(\Omega)$ such that
\[
u_n\to u\quad\mbox{ in } \mathcal{X}_{ \mu}(\Omega).
\]
\end{proposition}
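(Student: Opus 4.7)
The plan is to run the standard $(S_+)$-type argument for the fractional $p$-Laplacian operator, adapted to the sum structure of $L_\mu$ in the present setting. Since $\mu^+$ is the finite sum of Dirac deltas in~\eqref{musommafinitakappa}, the space $\mathcal X_\mu(\Omega)$ is uniformly convex by Proposition~\ref{uniformeconvesso}, hence reflexive. Therefore, from the boundedness of $u_n$ in $\mathcal X_\mu(\Omega)$ I can extract a subsequence (still denoted $u_n$) such that $u_n\rightharpoonup u$ weakly in $\mathcal X_\mu(\Omega)$ for some $u\in\mathcal X_\mu(\Omega)$. Since $[\,\cdot\,]_{s_k,p_k}\le \|\cdot\|_\mu$ for every $k$, the sequence $u_n$ is bounded also in $W^{s_k,p_k}_0(\Omega)$, and up to a further subsequence $u_n\rightharpoonup u$ weakly in each $W^{s_k,p_k}_0(\Omega)$. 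Moreover, by Corollary~\ref{corembedding} and Remark~\ref{utile}, I get $u_n\to u$ strongly in $L^q(\Omega)$ for the exponent $q\in(\widehat p,(p_\sharp)^*_{s_\sharp})$ appearing in~\eqref{AR1}.

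Next I would show that the nonlinearity does not contribute to the limit. By~\eqref{AR1} and the H\"older inequality,
\[
\left|\int_\Omega f(x,u_n(x))\bigl(u_n(x)-u(x)\bigr)\,dx\right|
\le a_1\|u_n-u\|_{L^1(\Omega)}+a_2\|u_n\|_{L^q(\Omega)}^{q-1}\|u_n-u\|_{L^q(\Omega)},
\]
and the right-hand side tends to $0$ by the strong $L^q$ convergence. On the other hand, since $\|u_n-u\|_\mu$ is bounded, the Palais--Smale condition~\eqref{bdd2} gives $\langle I'(u_n),u_n-u\rangle\to 0$. Using the explicit formula~\eqref{derfini7952qlwhfqor3u2t8y} for $I'$, this reduces to
\[
\sum_{k=1}^m \langle \Phi_k'(u_n),u_n-u\rangle\longrightarrow 0,
\qquad\text{where}\qquad
\Phi_k'(u)[v]:=\frac{c_{N,s_k,p_k}}{2}\iint_{\R^{2N}}\frac{|u(x)-u(y)|^{p_k-2}(u(x)-u(y))(v(x)-v(y))}{|x-y|^{N+s_kp_k}}\,dx\,dy.
\]

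At this point I would split
\[
\sum_{k=1}^m\langle \Phi_k'(u_n)-\Phi_k'(u),u_n-u\rangle
=\sum_{k=1}^m\langle \Phi_k'(u_n),u_n-u\rangle-\sum_{k=1}^m\langle \Phi_k'(u),u_n-u\rangle.
\]
The second sum tends to $0$ because $\Phi_k'(u)$ is a fixed continuous linear functional on $W^{s_k,p_k}_0(\Omega)$ and $u_n\rightharpoonup u$ weakly there. Hence the full left-hand side tends to $0$. Now each summand on the left is nonnegative by the standard monotonicity (Simon-type) inequality for the map $a\mapsto|a|^{p_k-2}a$ applied pointwise to $a=u_n(x)-u_n(y)$ and $b=u(x)-u(y)$, integrated against $|x-y|^{-N-s_kp_k}\,dx\,dy$. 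Consequently, for every $k\in\{1,\dots,m\}$,
\[
\langle \Phi_k'(u_n)-\Phi_k'(u),u_n-u\rangle\longrightarrow 0.
\]

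The main step, and in my view the real obstacle, is the passage from this scalar information to strong convergence in $W^{s_k,p_k}_0(\Omega)$. I would invoke the $(S_+)$ property of the fractional $p$-Laplacian (well known for $p\in(1,\infty)$, and easily established by combining the Simon inequalities - the form for $p\ge 2$ and the weighted form for $1<p<2$ together with the uniform convexity of $W^{s,p}_0$): from $u_n\rightharpoonup u$ weakly in $W^{s_k,p_k}_0(\Omega)$ and $\langle \Phi_k'(u_n)-\Phi_k'(u),u_n-u\rangle\to 0$, one deduces $[u_n-u]_{s_k,p_k}\to 0$. The delicate point is dealing uniformly with the exponents $p_k\in(1,N)$, since for $p_k\in(1,2)$ Simon's inequality carries a weight $(|a|+|b|)^{2-p_k}$ that has to be absorbed via H\"older's inequality together with the boundedness of $u_n$ in $W^{s_k,p_k}_0(\Omega)$. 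Once strong convergence is obtained for each $k$, I conclude by summing over $k$:
\[
\|u_n-u\|_\mu=\sum_{k=1}^m [u_n-u]_{s_k,p_k}\longrightarrow 0,
\]
which is the desired strong convergence in $\mathcal X_\mu(\Omega)$.
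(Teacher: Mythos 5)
Your proof is correct, but it takes a genuinely different route from the paper's. You run the classical monotone-operator / $(S_+)$ argument: you split $\langle I'(u_n),u_n-u\rangle$ into $\sum_k\langle\Phi_k'(u_n)-\Phi_k'(u),u_n-u\rangle + \sum_k\langle\Phi_k'(u),u_n-u\rangle$, kill the second piece by weak convergence, observe that the first piece has nonnegative summands by the monotonicity of $a\mapsto|a|^{p-2}a$, and then invoke the $(S_+)$ property of each $(-\Delta)^{s_k}_{p_k}$ (via Simon's inequalities, with the extra H\"older step for $1<p_k<2$) to get $[u_n-u]_{s_k,p_k}\to0$ for each $k$, and hence $\|u_n-u\|_\mu\to0$. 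The paper instead uses the convexity of $t\mapsto t^{p_k}$ composed with the seminorm to obtain the one-sided bound $\sum_k[u]^{p_k}_{s_k,p_k}\ge\limsup_n\sum_k[u_n]^{p_k}_{s_k,p_k}$, pairs it with Fatou's lemma for the reverse bound to get convergence of the aggregate quantity $\sum_k[u_n]^{p_k}_{s_k,p_k}$, and then a second application of Fatou term by term forces each $[u_n]_{s_k,p_k}\to[u]_{s_k,p_k}$, so that $\|u_n\|_\mu\to\|u\|_\mu$; strong convergence then follows from weak convergence plus norm convergence in the uniformly convex space $\mathcal X_\mu(\Omega)$ (Proposition~\ref{uniformeconvesso}). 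The trade-off: your approach is more direct and does not need the uniform convexity of the mixed space $\mathcal X_\mu(\Omega)$ at all, at the cost of carrying the Simon-type algebraic inequalities (with the genuinely technical case $1<p_k<2$) through the estimate; the paper's approach sidesteps Simon's inequalities entirely by offloading the final step to the Radon--Riesz property, which is exactly what Proposition~\ref{uniformeconvesso} was built for.
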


\begin{proof}
Since~$u_n$ is bounded in~$ \mathcal{X}_{ \mu}(\Omega)$, by Corollary~\ref{corembedding} there exists~$u\in\mathcal{X}_{ \mu}(\Omega)$ such that, up to subsequences, 
\begin{align}\label{weakconv}
&u_n\rightharpoonup u \quad\mbox{ in }  \mathcal{X}_{ \mu}(\Omega)\\ \label{convlp}
{\mbox{and }}\qquad& u_n\to u \quad\mbox{ in }  L^r(\Omega), \quad\mbox{ for any } r\in [1, (p_\sharp)^*_{s_\sharp}).
\end{align}

Now, by~\eqref{bdd2} and~\eqref{weakconv}, we have that
\[
\lim_{n\to +\infty}|\langle I'(u_n), u_n -u\rangle| = 0,
\]
namely
\begin{equation}\label{dwputgvha2345678ghergdk}
\begin{split}
&\lim_{n\to +\infty}\Bigg|
 \sum_{k=1}^m \frac{c_{N, s_k, p_k}}{2} \iint_{\R^{2N}} \frac{(u_n(x)-u_n(y))^{p_k -2} (u_n(x)-u_n(y))((u_n-u)(x)-(u_n-u)(y))}{|x-y|^{N+s_k p_k}}\, dx\, dy\\
&\qquad\qquad\qquad-\int_\Omega f(x, u_n(x)) (u_n-u)(x)\, dx\Bigg|=0.
\end{split}
\end{equation}

We recall that, by~\eqref{AR1}, 
\[
\left|\int_\Omega f(x, u_n(x)) (u_n-u)(x) \, dx\right|\le \int_\Omega (a_1 +a_2 |u_n(x)|^{q-1}) |u_n(x) - u(x)| \, dx.
\]
Moreover, since~$a_1 +a_2 |u_n|^{q-1} \in L^{\frac{q}{q-1}}(\Omega)$
(with uniformly bounded norm), by the H\"older inequality and~\eqref{convlp} we infer that
\begin{eqnarray*}&&
\lim_{n\to +\infty} \left|\int_\Omega f(x, u_n(x)) (u_n-u)(x) \, dx\right|
\le  \lim_{n\to +\infty} \int_\Omega (a_1 +a_2 |u_n(x)|^{q-1}) |u_n(x) - u(x)| \, dx\\&&\qquad\qquad
\le \lim_{n\to +\infty}\left(\int_\Omega (a_1 +a_2 |u_n(x)|^{q-1}) ^{\frac{q}{q-1}}\,dx\right)^{\frac{q-1}{q}}
\left(\int_\Omega  |u_n(x) - u(x)|^q\,dx\right)^{\frac1q}\\&&\qquad\qquad
\le C \lim_{n\to +\infty}\left(\int_\Omega  |u_n(x) - u(x)|^q\,dx\right)^{\frac1q}
=0.
\end{eqnarray*}

{F}rom this and~\eqref{dwputgvha2345678ghergdk}
we thus have that 
\begin{equation}\label{lim2}
\begin{split}
&\lim_{n\to +\infty}
 \sum_{k=1}^m \frac{c_{N, s_k, p_k}}{2} \iint_{\R^{2N}} \frac{(u_n(x)-u_n(y))^{p_k -2} (u_n(x)-u_n(y))((u_n-u)(x)-(u_n-u)(y))}{|x-y|^{N+s_k p_k}}\, dx\, dy
\\&\qquad\qquad\qquad=0.
\end{split}
\end{equation}

Since for any~$k$ the seminorm~$[\cdot]_{s_k,p_k}$ is convex,
we find that
\[
\begin{aligned}&
[u]_{s_k,p_k}^{p_k}-[u_n]_{s_k,p_k}^{p_k}\\
\geq\;& 
p_kc_{N, s_k, p_k} \iint_{\R^{2N}} \frac{(u_n(x)-u_n(y))^{p_k -2} (u_n(x)-u_n(y))((u-u_n)(x)-(u-u_n)(y))}{|x-y|^{N+s_k p_k}}\, dx\, dy.
\end{aligned}
\]
Summing over~$k$ and passing to the limit, it follows from~\eqref{lim2} that
\begin{equation*}
\sum_{k=1}^m[u]_{s_k,p_k}^{p_k}\geq 
\limsup_{n\to +\infty}\sum_{k=1}^m[u_n]_{s_k,p_k}^{p_k}.
\end{equation*}
Moreover, from Fatou's Lemma,
\[
\liminf_{n\to +\infty}\sum_{k=1}^m[u_n]_{s_k,p_k}^{p_k}
\geq \sum_{k=1}^m[u]_{s_k,p_k}^{p_k}.
\]
Accordingly,
\begin{equation}\label{lim4}
\lim_{n\to +\infty}\sum_{k=1}^m[u_n]_{s_k,p_k}^{p_k}
=\sum_{k=1}^m[u]_{s_k,p_k}^{p_k}.
\end{equation}

Since~$\|u_n\|_\mu$ is bounded, for any~$k\in\{1,\dots,m\}$ we have that~$[u_n]_{s_k,p_k}$ is bounded, and therefore it converges up to a
subsequence. As a result, we can write~\eqref{lim4} as
\begin{equation}\label{lim5}
\sum_{k=1}^m\left(\lim_{n\to +\infty}[u_n]_{s_k,p_k}^{p_k}-[u]_{s_k,p_k}^{p_k} \right)=0.
\end{equation}

Also, for any~$k\in\{1,\dots,m\}$, Fatou's Lemma implies that
\begin{equation*}
\lim_{n\to +\infty}[u_n]_{s_k,p_k}^{p_k}=
\liminf_{n\to +\infty}[u_n]_{s_k,p_k}^{p_k}\geq [u]_{s_k,p_k}^{p_k}.
\end{equation*}
Combining this with~\eqref{lim5}, we conclude that, for any~$k\in\{1,\dots,m\}$,
\[
\lim_{n\to +\infty}[u_n]_{s_k,p_k}^{p_k}=[u]_{s_k,p_k}^{p_k},
\]
that is
\[
\lim_{n\to +\infty}[u_n]_{s_k,p_k}=[u]_{s_k,p_k}.
\]
Thus, summing over~$k$,
\[
\lim_{n\to +\infty}\|u_n\|_\mu=
\lim_{n\to +\infty}\sum_{k=1}^m [u_n]_{s_k,p_k}=
\sum_{k=1}^m \lim_{n\to +\infty} [u_n]_{s_k,p_k}=
\sum_{k=1}^m [u]_{s_k,p_k}
=\|u\|_\mu.
\]

Now, from Proposition~\ref{uniformeconvesso} we know that~$\mathcal{X}_\mu(\Omega)$ is uniformly convex, and thus~$u_n$ converges strongly to~$u$ in~$\mathcal{X}_\mu(\Omega)$, 
as desired.
\end{proof}

\begin{proof}[Proof of the Theorem~\ref{MPT}]
By Propositions~\ref{geo1} and~\ref{geo2}, we infer that the functional~$I$ satisfies the geometric properties of the Mountain Pass Theorem.
Moreover, the validity of the Palais-Smale condition at any level~$c\in\R$ is provided by Propositions~\ref{PropPS1} and~\ref{PropPS2}. 
Thus, we can apply the Mountain Pass Theorem and conclude that there exists a critical point~$u\in\mathcal{X}_\mu(\Omega)$ of~$I$ such that
\[
I(u)\geq \beta >0=I(0).
\]
Hence, in particular, $u\neq 0$, as desired.
\end{proof}

\section{Some applications of Theorems~\ref{Weierstrass} and~\ref{MPT}}\label{sec4}

In this section, we provide some explicit examples of measures satisfying~\eqref{muvbdshail}, \eqref{anchequestat4390}, \eqref{ipotesimus} and~\eqref{ipotesimup} and we provide 
some applications of Theorems~\ref{Weierstrass} and~\ref{MPT}.


\begin{corollary}\label{corstrano1}
Let~$l$, $n$, $m\in\N\setminus\{0\}$.
For any~$i\in\{1,\dots,n\}$, let~$s_i\in[0,1]$ and~$\overline s:=\max\{s_1,\dots,s_n\}$.
For any~$k\in\{1,\dots,l\}$, let~$\tilde s_k\in[0,\overline s)$
and~$\alpha_k\in\R$.

For any~$j\in\{1,\dots, m\}$, let~$p_1,\dots,p_m\in(1,N)$ and~$\beta_j\in(0,+\infty)$.

Let~$g\in L^{\widehat{p}'}(\Omega)$. 

Then, there exists~$\alpha_0>0$ such that if
\begin{equation*}
\sum_{k=1}^{l}\alpha_{k}< \alpha_0,
\end{equation*}
then the problem
\[
\begin{cases}
\displaystyle\sum_{{i=1,\dots,n}\atop{j=1,\dots,m }} \beta_j(-\Delta)^{s_i}_{p_j} u -\sum_{{k=1,\dots,l}\atop{j=1,\dots,m }}\alpha_{k}\beta_j(-\Delta)^{s_k}_{p_j} u= g(x) &{\mbox{ in }}\Omega,\\
u=0&{\mbox{ in }}\R^N\setminus\Omega
\end{cases}
\]
admits a nontrivial solution corresponding to a global minimizer for the associated energy functional.

In addition, 
\[
\begin{cases}
\displaystyle\sum_{{i=1,\dots,n}\atop{j=1,\dots,m }} \beta_j
(-\Delta)^{s_i}_{p_j} u = g(x) &{\mbox{ in }}\Omega,\\
u=0&{\mbox{ in }}\R^N\setminus\Omega
\end{cases}
\]
has a unique solution.
\end{corollary}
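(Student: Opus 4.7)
The plan is to reduce this to Theorem \ref{Weierstrass} by constructing an appropriate signed product measure and verifying its three structural hypotheses. I would define
\[
\mu_s^+ := \sum_{i=1}^n \delta_{s_i}, \qquad \mu_s^- := \sum_{k=1}^l \alpha_k \, \delta_{\tilde s_k}, \qquad \mu_p := \sum_{j=1}^m \beta_j \, \delta_{p_j},
\]
and set $\mu := (\mu_s^+ - \mu_s^-)\times \mu_p$. By construction, this signed measure has the product structure required by \eqref{ipotesimuprodotto}, and the operator $L_\mu$ reproduces precisely the left-hand side of the PDE in the statement. If some $\alpha_k$ happen to be negative, the corresponding mass should first be transferred from $\mu_s^-$ to $\mu_s^+$, so that the Jordan decomposition $\mu = \mu^+ - \mu^-$ has genuinely nonnegative parts; this reassignment only makes the smallness hypothesis easier to satisfy.

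Next I would check the three structural hypotheses. Condition \eqref{anchequestat4390} reduces to $\mu_s^-([\overline s,1]) = 0$, which is immediate since every $\tilde s_k$ lies in $[0, \overline s)$. Condition \eqref{ipotesimup} is automatic because $\mu_p$ is supported on the finite set $\{p_1, \dots, p_m\} \subset (1, N)$, so choosing $\delta, \eta > 0$ small enough yields $\operatorname{supp}(\mu_p) \subset [1+\delta, N-\eta]$. For condition \eqref{ipotesimus}, note that $\overline s$ is attained by some $s_i$, so $\mu_s^+([\overline s, 1]) \ge 1$, while $\mu_s^-([0, \overline s)) \le \sum_{k=1}^l \alpha_k$ after the sign cleanup. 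Defining $\alpha_0 := \gamma_0$, where $\gamma_0$ is the constant produced by Theorem \ref{Weierstrass}, the smallness assumption $\sum_k \alpha_k < \alpha_0$ then gives \eqref{ipotesimus} for some admissible $\gamma \in [0, \gamma_0]$.

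Once all three hypotheses are in force, a direct invocation of Theorem \ref{Weierstrass} delivers a weak solution as a global minimizer of the associated energy functional, proving the first assertion. For the second assertion, upon removing the $\alpha_k$ terms one has $\mu^- \equiv 0$, so the uniqueness clause of Theorem \ref{Weierstrass} applies, using the strict convexity of the resulting energy functional.

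The argument is essentially bookkeeping: all the conceptual work has been done in Theorem \ref{Weierstrass}. The one minor subtlety is the sign handling when some $\alpha_k$ are negative, to ensure that $\mu^\pm$ are genuinely nonnegative measures before the reabsorption estimate of Corollary \ref{corollarymumeno} is applied. The weights $\beta_j > 0$ enter merely as positive multiplicative factors uniformly affecting every $(-\Delta)^s_{p_j}$ and do not interfere with the reabsorption mechanism underlying \eqref{ipotesimus}.
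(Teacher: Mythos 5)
Your construction of the measure, the verification of \eqref{anchequestat4390} and \eqref{ipotesimup}, the use of $\mu_s^+([\overline s,1])\ge 1$, and the invocation of Theorem~\ref{Weierstrass} follow the paper's proof almost verbatim; this is essentially the same argument.

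The one place where you go beyond the paper --- the ``sign cleanup'' to handle possibly negative $\alpha_k$ --- is well motivated (the paper silently treats $\sum_k\alpha_k\,\delta_{\tilde s_k}$ as if it were a nonnegative measure), but the inequality you claim there is backwards. After moving the masses with $\alpha_k<0$ from $\mu_s^-$ to $\mu_s^+$, the genuine negative part is
\[
\mu_s^-\big([0,\overline s)\big)=\sum_{k\,:\,\alpha_k>0}\alpha_k \;\ge\; \sum_{k=1}^l\alpha_k,
\]
not $\le$ as you write: dropping the negative summands can only increase the sum. Hence the reassignment makes the smallness condition \eqref{ipotesimus} \emph{harder}, not easier, to verify from the hypothesis $\sum_k\alpha_k<\alpha_0$. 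Moreover, the transferred positive mass sits at the points $\tilde s_k<\overline s$, so it does not enlarge $\mu_s^+([\overline s,1])$ and therefore does not help the reabsorption estimate of Theorem~\ref{mupiumangiamumeno}, which only ``sees'' $\mu_s^+$ on $[\overline s,1]$. To patch this rigorously one would either have to assume $\alpha_k\ge 0$ (which is what the paper's proof implicitly does) or replace the smallness hypothesis by one on $\sum_{k:\alpha_k>0}\alpha_k$; under the stated hypothesis alone the fix you sketch does not go through.
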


\begin{proof}
We set
\[
\mu:= \sum_{{i=1,\dots,n}\atop{j=1,\dots,m }}\beta_j
\delta_{(s_i, p_j)} - \sum_{{k=1,\dots,l}\atop{j=1,\dots,m }}
\alpha_{k}\beta_j\,\delta_{(s_k, p_j)}.
\]
We point out that~$\mu$ is in the form~\eqref{muvbdshail} and that~\eqref{anchequestat4390} and~\eqref{ipotesimup} are satisfied.

Moreover, to check~\eqref{ipotesimus}, we observe that
$$\mu_s^-([0,\overline s)) =\sum_{k=1}^l \alpha_{k}=
\mu_s^+([\overline s,1])
\sum_{k=1}^l \alpha_{k}. $$
Thus, \eqref{ipotesimus} is satisfied with
$$\gamma:=\sum_{k=1}^l \alpha_{k}.$$
Hence, we derive the desired result from Theorem~\ref{Weierstrass}.
\end{proof}

\begin{proof}[Proof of Corollary~\ref{cor1}]
The results are a direct consequence of Corollary~\ref{corstrano1}, by taking~$n=m=l=1$, $\widehat p = p$, $\overline s=s_1$ and~$\beta=1$.
\end{proof}

\begin{corollary}\label{corstrano2}
Let~$n$, $m\in\N\setminus\{0\}$.
For any~$i\in\{1,\dots,n\}$, let~$s_i\in[0,1]$ and~$\overline s:=\max\{s_1,\dots,s_n\}$.
For any~$k\in\N$, let~$\tilde s_k\in[0,\overline s)$
and~$\alpha_k\in\R$.

For any~$j\in\{1,\dots, m\}$, let~$p_1,\dots,p_m\in(1,N)$ and~$\beta_j\in(0,+\infty)$.

Let~$g\in L^{\widehat{p}'}(\Omega)$. 

Then, there exists~$\alpha_0>0$ such that if
\[
\sum_{k=1}^{+\infty} \alpha_{k}< \alpha_0,
\]
then the problem
\[
\begin{cases}
\displaystyle\sum_{{i=1,\dots,n}\atop{j=1,\dots,m}} \beta_j(-\Delta)^{s_i}_{p_j} u - \sum_{{k\in\N}\atop{j=1,\dots,m }}\alpha_{k}\beta_{ j}(-\Delta)^{s_k}_{p_j} u= g(x) &{\mbox{ in }}\Omega,\\
u=0&{\mbox{ in }}\R^N\setminus\Omega
\end{cases}
\]
admits a nontrivial solution corresponding to a global minimizer for the associated energy functional. 
\end{corollary}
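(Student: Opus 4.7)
The plan is to deduce Corollary~\ref{corstrano2} from Theorem~\ref{Weierstrass} by an explicit choice of measure, following the same strategy already used for Corollary~\ref{corstrano1}. The only novelty here is that the negative part of $\mu$ is now a convergent series of Dirac's masses rather than a finite sum, but the assumption $\sum_k \alpha_k<+\infty$ makes $\mu^-$ a genuine finite Borel measure on~$\Sigma$. Concretely, I would set
$$
\mu := \sum_{\substack{i=1,\dots,n \\ j=1,\dots,m}} \beta_j\, \delta_{(s_i,p_j)} - \sum_{\substack{k\in \N \\ j=1,\dots,m}} \alpha_k \beta_j\, \delta_{(\tilde s_k, p_j)}.
$$
The positive part $\mu^+$ is a finite sum of Dirac's, which matches the framework of~\eqref{muvbdshail}, and the product decomposition~\eqref{ipotesimuprodotto} is realized by the factorizations $\mu_s^+ := \sum_i \delta_{s_i}$, $\mu_s^- := \sum_{k\in\N} \alpha_k \delta_{\tilde s_k}$, and $\mu_p := \sum_j \beta_j \delta_{p_j}$.

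Next I would verify the remaining hypotheses of Theorem~\ref{Weierstrass}. Condition~\eqref{anchequestat4390} is immediate from $\tilde s_k \in [0,\overline s)$ for every $k$. Condition~\eqref{ipotesimup} is obtained by choosing $\delta, \eta > 0$ small enough so that the finite set $\{p_1,\dots,p_m\}$ lies in $[1+\delta, N-\eta]$. For the crucial reabsorbing condition~\eqref{ipotesimus}, I would compute $\mu_s^-([0,\overline s)) = \sum_{k=1}^{+\infty}\alpha_k$ and observe that $\mu_s^+([\overline s,1]) \geq 1$, since at least one $s_i$ attains $\overline s$. Therefore~\eqref{ipotesimus} holds with
$$
\gamma := \sum_{k=1}^{+\infty}\alpha_k,
$$
which is finite by hypothesis.

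Finally, setting $\alpha_0 := \gamma_0$, where $\gamma_0$ is the constant provided by Theorem~\ref{Weierstrass}, the smallness assumption $\sum_k \alpha_k<\alpha_0$ guarantees $\gamma \in [0, \gamma_0]$, and Theorem~\ref{Weierstrass} then directly produces a weak solution realized as a global minimizer of the associated energy functional. The only place where the countable (rather than finite) nature of $\mu^-$ could in principle cause trouble is the reabsorbing estimate underlying the whole machinery, namely Theorem~\ref{mupiumangiamumeno} and Corollary~\ref{corollarymumeno}; however, those statements are formulated for arbitrary finite Borel measures, with no countability restriction on the support of $\mu^-$, so the argument passes through verbatim. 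In short, no substantive obstacle appears: the proof reduces to checking that the definitions align correctly with the hypotheses of Theorem~\ref{Weierstrass}.
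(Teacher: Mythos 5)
Your proof is correct and follows essentially the same path as the paper's: define the natural signed measure with the product factorization $\mu_s^+ = \sum_i \delta_{s_i}$, $\mu_s^- = \sum_k \alpha_k \delta_{\tilde s_k}$, $\mu_p = \sum_j \beta_j \delta_{p_j}$, check~\eqref{anchequestat4390}, \eqref{ipotesimup} and~\eqref{ipotesimus} with $\gamma = \sum_k \alpha_k$, and invoke Theorem~\ref{Weierstrass}. Your version is slightly more careful in two respects: you bound $\mu_s^+([\overline s,1])\geq 1$ rather than implicitly asserting equality (which would fail if several $s_i$ coincide with $\overline s$), and you explicitly note that the reabsorbing estimates of Theorem~\ref{mupiumangiamumeno} and Corollary~\ref{corollarymumeno} are formulated for general finite Borel measures so the countable support of $\mu^-$ causes no trouble; neither point alters the structure of the argument.
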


\begin{proof}
We set
\[
\mu:= \sum_{{i=1,\dots,n}\atop{j=1,\dots,m}} \beta_j\delta_{(s_i, p_j)} - \sum_{{k\in\N}\atop{j=1,\dots,m}}\alpha_{k}\beta_j\,\delta_{(s_k, p_j)}.
\]
We point out that~$\mu$ is in the form~\eqref{muvbdshail} and that~\eqref{anchequestat4390} and~\eqref{ipotesimup} are satisfied.

Moreover, to check~\eqref{ipotesimus}, we observe that
$$\mu_s^-([0,\overline s)) =\sum_{k=1}^{+\infty} \alpha_{k}=
\mu_s^+([\overline s,1])
\sum_{k=1}^{+\infty} \alpha_{k} . $$
Thus, \eqref{ipotesimus} is satisfied with
$$\gamma:=\sum_{k=1}^{+\infty} \alpha_{k}.$$
The desired result then follows directly from Theorem~\ref{Weierstrass}.
\end{proof}

\begin{proof}[Proof of Corollary~\ref{cor3}]
The result is a direct consequence of Corollary~\ref{corstrano2}, by taking~$n=m=1$, $\widehat p = p$, $\overline s=1$ and~$\beta=1$.
\end{proof}

\begin{corollary}\label{corstrano3}
Let~$n$, $m\in\N\setminus\{0\}$.
For any~$i\in\{1,\dots,n\}$, let~$s_i\in[0,1]$ and~$\overline s:=\max\{s_1,\dots,s_n\}$.
For any~$j\in\{1,\dots, m\}$, let~$p_1,\dots,p_m\in(1,N)$ and~$\beta_j\in(0,+\infty)$.

Let~$\omega:[0,1]\to \R$ be any measurable function.
Let~$g\in L^{\widehat{p}'}(\Omega)$. 

Then, there exists~$\omega_0>0$ such that if
\[
\int_0^{\overline s} \omega(s)\,ds< \omega_0,
\]
then problem
\[
\begin{cases}
\displaystyle\sum_{{i=1,\dots,n}\atop{j=1,\dots,m}} \beta_j
(-\Delta)^{s_i}_{p_j} u -  \sum_{j=1}^m 
\beta_{j}\int_0^{\overline s} (-\Delta)^{s}_{p_j} u \, \omega(s)\, ds= g(x) &{\mbox{ in }}\Omega,\\
u=0&{\mbox{ in }}\R^N\setminus\Omega
\end{cases}
\]
admits a nontrivial solution corresponding to a global minimizer for the associated energy functional.
\end{corollary}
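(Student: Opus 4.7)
The plan is to deduce this corollary directly from Theorem~\ref{Weierstrass}, in the same spirit as Corollaries~\ref{corstrano1} and~\ref{corstrano2}, but replacing the discrete sum over Dirac masses in the negative part by an absolutely continuous component weighted by $\omega$.

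First, I would define the relevant measures. Set
\[
\mu_s^+ := \sum_{i=1}^n \delta_{s_i}, \qquad \mu_s^- := \omega(s)\,\mathbf{1}_{[0,\overline s)}(s)\,ds, \qquad \mu_p := \sum_{j=1}^m \beta_j\,\delta_{p_j},
\]
and then
\[
\mu := \mu_s^+\times \mu_p - \mu_s^-\times \mu_p
=\sum_{{i=1,\dots,n}\atop{j=1,\dots,m }} \beta_j\,\delta_{(s_i,p_j)} - \sum_{j=1}^m \beta_j \,\omega(s)\mathbf{1}_{[0,\overline s)}(s)\,ds\times\delta_{p_j}.
\]
By construction $\mu$ has the product form \eqref{ipotesimuprodotto}, and its positive part is a finite sum of (weighted) Dirac masses as in \eqref{muvbdshail}. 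Inserting this $\mu$ into \eqref{elle} recovers exactly the operator appearing on the left-hand side of the problem in the statement.

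Next, I would verify the hypotheses of Theorem~\ref{Weierstrass}. Condition~\eqref{anchequestat4390} holds because $\mu_s^-$ is supported in $[0,\overline s)$. Condition~\eqref{ipotesimup} is immediate: the support of $\mu_p$ is the finite set $\{p_1,\dots,p_m\}\subset(1,N)$, hence it is contained in $[1+\delta,N-\eta]$ for some $\delta,\eta>0$. For~\eqref{ipotesimus}, since $\overline s=\max\{s_i\}\in\{s_1,\dots,s_n\}$ we have $\mu_s^+([\overline s,1])\ge 1$, so
\[
\mu_s^-([0,\overline s))=\int_0^{\overline s}\omega(s)\,ds \le \Big(\int_0^{\overline s}\omega(s)\,ds\Big)\,\mu_s^+([\overline s,1]),
\]
so we may take $\gamma:=\int_0^{\overline s}\omega(s)\,ds$.

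Finally, let $\gamma_0$ be the constant provided by Theorem~\ref{Weierstrass}. Setting $\omega_0:=\gamma_0$, the assumption $\int_0^{\overline s}\omega(s)\,ds<\omega_0$ guarantees $\gamma\in[0,\gamma_0]$, so Theorem~\ref{Weierstrass} applies and yields a weak solution to the corresponding problem that is a global minimizer of the associated energy functional. The only mildly non-routine point is that $\mu^+$ is a weighted sum of Dirac masses with weights $\beta_j$ rather than the uniform-weight sum literally written in \eqref{muvbdshail}; however, exactly as in the proofs of Corollaries~\ref{corstrano1} and~\ref{corstrano2}, the positive coefficients $\beta_j$ can be absorbed into the definition of $\mu_p$ without altering any of the structural arguments of Section~\ref{sec3}, so no real obstacle arises.
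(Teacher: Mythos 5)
Your proposal is correct and follows essentially the same route as the paper: define $\mu^+$ as a $\beta_j$-weighted finite sum of Dirac masses, $\mu^-$ as the product of the absolutely continuous measure $\omega(s)\,ds$ (restricted to $[0,\overline s)$) with $\sum_j\beta_j\delta_{p_j}$, verify \eqref{anchequestat4390}, \eqref{ipotesimup}, \eqref{ipotesimus} with $\gamma:=\int_0^{\overline s}\omega(s)\,ds$, and invoke Theorem~\ref{Weierstrass}. Your explicit truncation of $\omega$ to $[0,\overline s)$ and use of the inequality $\mu_s^+([\overline s,1])\ge 1$ (rather than implicitly assuming equality) are in fact mildly more careful than the paper's wording, but the argument is the same.
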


\begin{proof}
We write~$\mu=\mu^+-\mu^-$, where
\[
\mu^+:= \sum_{{i=1,\dots,n}\atop{j=1,\dots,m}} \beta_j \delta_{(s_i, p_j)}\qquad\mbox{and}\qquad \mu^-:= \nu_s \, \sum_{j=1}^m \beta_{j}\,\delta_{p_j}, 
\]
being~$\nu_s$ such that~$
d\nu_s = \omega(s)\, ds$.

We observe that~$\mu$ is in the form~\eqref{muvbdshail} and that~\eqref{anchequestat4390} and~\eqref{ipotesimup} are satisfied.

Moreover, we see that
$$\mu_s^-([0,\overline s)) =\int_0^{\overline s}\omega(s)\,ds=
\mu_s^+([\overline s,1])
\int_0^{\overline s}\omega(s)\,ds, $$
and therefore~\eqref{ipotesimus} is satisfied with
$$\gamma:=\int_0^{\overline s}\omega(s)\,ds.$$
The desired result then follows from Theorem~\ref{Weierstrass}.
\end{proof}

\begin{proof}[Proof of Corollary~\ref{cor4}]
The result plainly follows by Corollary~\ref{corstrano3}, by taking~$n=m=1$,
$\widehat p = p$, $\overline s=1$ and~$\beta=1$.
\end{proof}

Finally, we prove Corollaries~\ref{cor5} and~\ref{cor6},
by exploiting the general statement in Theorem~\ref{MPT}.

\begin{proof}[Proof of Corollary~\ref{cor5}]
We set~$m=2$, so that~$\mu=\mu^+$ reduces to
\[
\mu:=\delta_{(s_1, p_1)} + \delta_{(s_2, p_2)}.
\]
In this case, the critical exponent is given by
\[
(p_\sharp)^*_{s_\sharp}:= \max\left\{\frac{N p_1}{N- s_1 p_1}, \frac{N p_2}{N- s_2 p_2}\right\}.
\]
Hence, the desired result follows from Theorem~\ref{MPT}.
\end{proof}

\begin{proof}[Proof of Corollary~\ref{cor6}]
In this case, $\mu=\mu^+$ is given by
\[
\mu:= \sum_{k=1}^m\delta_{(s_k, p_k)}.
\]
Moreover, the critical exponent is the one provided in~\eqref{criticocostante}. Thus, the desired result is a consequence of Theorem~\ref{MPT}.
\end{proof}

\begin{appendix}
\section{Discussion on the well-posedness of the energy functional}\label{appendice}

In order to deal with problem~\eqref{P1}, we restrict ourselves to
the case of a measure of the form~\eqref{muvbdshail}.
The reason for this is that
one can find measures for which the 
the functional
associated with problem~\eqref{P1} is not well defined, even though
the associated norm is finite on the functional space. 


More precisely, our aim is to 
find measures~$\mu$ and functions~$u\in \mathcal{X}_\mu$ such that
\[
\|u\|_\mu<+\infty \qquad \mbox{ and }\qquad 
\iint_\Sigma\frac{1}{2p} [u]_{s,p}^p\,d\mu^+(s,p)=+\infty,
\]
or, in other words, that the associated functional is not well 
defined on the space~$\mathcal{X}_\mu$.
On this matter, we provide two examples.
\medskip

First, we deal with the case
\[
\mu^+:= \sum_{k=0}^{+\infty} c_k\delta_{(s_k, p_k)}.
\]
Let~$\Omega=B_1\subset \R^N$, with~$N\geq 2$, and let
\[
s_k:=1,\, p_k:=N-\frac{1}{k}\, \mbox{ and } 
c_k:=k^{-1-\frac{1+\varepsilon}{p_k}}
\mbox{ for every }k\in \N \mbox{ and } \varepsilon\in (0,1).
\]
Then,
\[
\|u\|_\mu=\sum_{k=0}^{+\infty} c_k\|\nabla u\|_{L^{p_k}(B_1)}.
\]
We want to show that there exists some~$u$ with~$\|u\|_\mu<+\infty$
and
\[
\iint_\Sigma\frac{1}{2p} [u]_{s,p}^p\,d\mu^+(s,p)=
\sum_{k=0}^{+\infty} \frac{c_k}{2p_k}\|\nabla u\|_{L^{p_k}(B_1)}^{p_k}=+\infty.
\]
To do this, consider the function~$u(x):=\log|x|$, so that~$\nabla u(x)=x/|x|^2$. Clearly,
\begin{eqnarray*}&&
\|\nabla u\|_{L^{p_k}(B_1)}=
\left(\;\int_{B_1}\frac{dx}{|x|^{p_k}} \right)^\frac{1}{p_k}
=\omega_{N-1}^\frac{1}{p_k}
\left(\int_0^1\rho^{N-p_k-1}\,d\rho  \right)^\frac{1}{p_k}
\\&&\qquad\qquad=\frac{\omega_{N-1}^\frac{1}{p_k}}{(N-p_k)^{\frac{1}{p_k}}}
=\omega_{N-1}^\frac{1}{N-1/k}k^\frac{1}{N-1/k}
\end{eqnarray*}
Thus,
\[
\|u\|_\mu=
\sum_{k=0}^{+\infty} c_k \|\nabla u\|_{L^{p_k}(B_1)}
=\sum_{k=0}^{+\infty} k^{-1-\frac{1+\varepsilon}{N-1/k}}
\omega_{N-1}^\frac{1}{N-1/k}k^\frac{1}{N-1/k}
=\sum_{k=0}^{+\infty} 
\omega_{N-1}^\frac{1}{N-1/k}k^{-1-\frac{\varepsilon}{N-1/k}}<+\infty.
\]
On the other hand,
\[
\sum_{k=0}^{+\infty} \frac{c_k}{2p_k}\|\nabla u\|_{L^{p_k}(B_1)}^{p_k}
\geq \frac{1}{2N}\sum_{k=0}^{+\infty} k^{-1-\frac{1+\varepsilon}{N-1/k}}
\omega_{N-1}k
=\frac{\omega_{N-1}}{2N}\sum_{k=0}^{+\infty}
k^{-\frac{1+\varepsilon}{N-1/k}}=+\infty,
\]
as desired.
\medskip

For the second example, we consider
\[
\mu^+= \delta_1 \times \nu_p.
\]
Here, $\nu_p$ is a measure defined on~$(1,N)$ such that~$
d\nu_p=h(p)\,dp$
for some measurable function~$h:(1,N) \to \R$.

Let~$\Omega=B_1\subset \R^N$, with~$N\geq 2$, and let~$h(p):=2p$.
Then,
\[
\|u\|_\mu=\int_1^N \|\nabla u\|_{L^{p}(B_1)}\,dp.
\]
In this case, we want to show that there exists some~$u$ with~$\|u\|_\mu<+\infty $ and 
\[
\iint_\Sigma\frac{1}{2p} [u]_{s,p}^p\,d\mu^+(s,p)=
\int_1^N \|\nabla u\|_{L^{p}(B_1)}^p\,dp=+\infty.
\]
In order to do this, we argue as in the first example
and consider the function~$u(x):=\log |x|$.
Now,
\[
\|\nabla u\|_{L^{p}(B_1)}=
\frac{\omega_{N-1}^\frac{1}{p}}{(N-p)^\frac{1}{p}}.
\]
Thus,
\[
\|u\|_\mu=\int_1^N \|\nabla u\|_{L^{p}(B_1)}\,dp
=\int_1^N\frac{\omega_{N-1}^\frac{1}{p}}{(N-p)^{\frac{1}{p}}}\,dp.
\]
Clearly,
\begin{equation}\label{controesempioappendice1}
\int_1^{N-\frac12}\frac{\omega_{N-1}^\frac{1}{p}}{(N-p)^{\frac{1}{p}}}\,dp<+\infty.
\end{equation}
Moreover, if~$p\in (N-\frac{1}{2},N)$ we have that
\[
(N-p)^{\frac{1}{N-\frac{1}{2}}}\leq (N-p)^{\frac{1}{p}}\leq (N-p)^{\frac{1}{N}},
\]
and so 
\begin{equation*}
\int_{N-\frac12}^N\frac{\omega_{N-1}^\frac{1}{p}}{(N-p)^{\frac{1}{p}}}\,dp
\leq \int_{N-\frac12}^N\frac{\omega_{N-1}^\frac{1}{p}}{(N-p)^{\frac{1}{N-\frac{1}{2}}}}\,dp<+\infty.
\end{equation*}
{F}rom this and~\eqref{controesempioappendice1} we infer that~$\|u\|_\mu<+\infty$.

In addition,
\[
\int_1^N \|\nabla u\|_{L^{p}(B_1)}^p\,dp
=\int_1^N \frac{\omega_{N-1}}{(N-p)}\,dp=+\infty,
\]
as desired.

\section*{Acknowledgements} 
All the authors are members of the Australian Mathematical Society (AustMS). CS and EPL are members of the INdAM--GNAMPA.

This work has been supported by the Australian Laureate Fellowship FL190100081.
\end{appendix}

\vfill

\end{document}